\newcommand{\SlopeTriangle}[6]
{

    \pgfplotsextra
    {
        \pgfkeysgetvalue{/pgfplots/xmin}{\xmin}
        \pgfkeysgetvalue{/pgfplots/xmax}{\xmax}
        \pgfkeysgetvalue{/pgfplots/ymin}{\ymin}
        \pgfkeysgetvalue{/pgfplots/ymax}{\ymax}

        \pgfmathsetmacro{\xArel}{#1}
        \pgfmathsetmacro{\yArel}{#3}
        \pgfmathsetmacro{\xBrel}{#1-#2}
        \pgfmathsetmacro{\yBrel}{\yArel}
        \pgfmathsetmacro{\xCrel}{\xArel}

        \pgfmathsetmacro{\lnxB}{\xmin*(1-(#1-#2))+\xmax*(#1-#2)} 
        \pgfmathsetmacro{\lnxA}{\xmin*(1-#1)+\xmax*#1} 
        \pgfmathsetmacro{\lnyA}{\ymin*(1-#3)+\ymax*#3} 
        \pgfmathsetmacro{\lnyC}{\lnyA+#4*(\lnxA-\lnxB)}
        \pgfmathsetmacro{\yCrel}{\lnyC-\ymin)/(\ymax-\ymin)} 

        \coordinate (A) at (rel axis cs:\xArel,\yArel);
        \coordinate (B) at (rel axis cs:\xBrel,\yBrel);
        \coordinate (C) at (rel axis cs:\xCrel,\yCrel);

        \draw[#6]   (A)-- node[anchor=north] {#5}
                    (B)--
                    (C)--
                    cycle;
    }
}
\newcommand{\GD}{{\Gamma_{\rm D}}}
\newcommand{\GA}{{\Gamma_{\rm A}}}
\newcommand{\grad}{\boldsymbol \nabla}
\renewcommand{\div}{\grad {\cdot}}
\newcommand{\ddiv}{\operatorname{div}}
\newcommand{\xx}{\boldsymbol x}
\newcommand{\yy}{\boldsymbol y}
\newcommand{\nn}{\boldsymbol n}
\newcommand{\dd}{\boldsymbol d}
\newcommand{\HH}{\boldsymbol H}
\newcommand{\LL}{\boldsymbol L}
\newcommand{\vv}{\boldsymbol v}
\newcommand{\sig}{\boldsymbol \sigma}
\newcommand{\ttau}{\boldsymbol \tau}
\newcommand{\tpi}{\widetilde \pi}
\newcommand{\Cba}{{\sigma_{\rm ba}}}
\newcommand{\tCba}{{\widetilde \sigma}_{\rm ba}}
\newcommand{\Ci}{C_{\rm i}(\kappa)}
\newcommand{\Cstab}{C_{\rm stab}(\widehat \Omega,\widehat \Gamma_{\rm D})}
\newcommand{\Cstabx}{C_{\rm stab}(\widehat \Omega,\widehat \Gamma_{\rm D},\xx_0)}
\newcommand{\Ctra}{C_{{\rm tr},\aaa}}
\newcommand{\Cqi}{C_{\rm qi}}
\newcommand{\Cqia}{C_{{\rm qi},\aaa}}
\newcommand{\RR}{\mathcal R}
\newcommand{\TT}{\mathcal T}
\newcommand{\VV}{\mathcal V}
\newcommand{\FF}{\mathcal F}
\newcommand{\PP}{\mathcal P}
\newcommand{\PPP}{\boldsymbol \PP}
\newcommand{\RT}{\boldsymbol{RT}}
\renewcommand{\Re}{\operatorname{Re}}
\renewcommand{\Im}{\operatorname{Im}}
\newcommand{\aaa}{{\boldsymbol a}}
\newcommand{\oa}{{\omega_\aaa}}
\newcommand{\oK}{{\omega_K}}
\newcommand{\supp}{\operatorname{supp}}
\newcommand{\ssig}{\boldsymbol \sig}
\newcommand{\Cupper}{{C_{\rm up}}}
\newcommand{\Clow}{C_{\rm low}}
\newcommand{\ClowK}{C_{{\rm low},\oK}}
\newcommand{\Clowa}{C_{{\rm lb},\aaa}}
\newcommand{\Clowerhat}{C_{{\rm lb},{\rm hat},\aaa}}
\newcommand{\Clowerqi }{C_{{\rm lb},{\rm qi },\aaa}}
\newcommand{\II}{\mathcal I}
\newcommand{\lift}{\mathscr L}
\newcommand{\anglepw}{\nu}
\newcommand\CcPFa{C_{\mathrm{cont}, \mathrm{PF},\aaa}}
\newcommand\Csta{C_{\mathrm{st}, \aaa}}
\newcommand\eq{:=}
\newcommand\ie{i.e.}
\newcommand\cf{cf.}
\newcommand\eg{e.g.}
\newtheorem{theo}{Theorem}
\newtheorem{coro}[theo]{Corollary}
\newtheorem{lemm}[theo]{Lemma}
\newtheorem{prop}[theo]{Proposition}
\newtheorem{rema}[theo]{Remark}
\newtheorem{defi}[theo]{Definition}
\newtheorem{assu}[theo]{Assumption}
\numberwithin{equation}{section}
\numberwithin{theo}{section}
\newcommand{\osc}{\operatorname{osc}}
\newcommand{\CPa}{C_{\mathrm{PF},\aaa}}
\newcommand{\norm}[1]{{\left\vert\kern-0.25ex\left\vert\kern-0.25ex\left\vert #1 \right\vert\kern-0.25ex\right\vert\kern-0.25ex\right\vert}}
\title[A posteriori error estimates for the Helmholtz equation]%
{On the derivation of guaranteed and $\MakeLowercase{p}$-robust a posteriori error
estimates for the Helmholtz equation$^\star$}
\author{T. Chaumont-Frelet$^{1,2}$}
\author{A. Ern$^{3,4}$}
\author{M. Vohral\'ik$^{4,3}$}
\address{\vspace{-.5cm}}
\address{\noindent \tiny \textup{$^\star$This project has received funding from the European Research Council (ERC) under the European Union’s Horizon 2020}}
\address{\noindent \tiny \textup{\hspace{.2cm}research and innovation program (grant agreement No 647134).}}
\address{\noindent \tiny \textup{$^1$Inria, 2004 Route des Lucioles, 06902 Valbonne, France}}
\address{\noindent \tiny \textup{$^2$Laboratoire J.A. Dieudonn\'e, Parc Valrose, 28 Avenue Valrose, 06108 Nice Cedex 02, 06000 Nice, France}}
\address{\noindent \tiny \textup{$^3$CERMICS, Ecole des Ponts, 77455 Marne-la-Vall\'ee, France}}
\address{\noindent \tiny \textup{$^4$Inria, 2 rue Simone Iff, 75589 Paris, France}}
\begin{document}

\maketitle

\begin{abstract}
We propose a novel {\em a posteriori} error estimator for conforming finite element
discretizations of two- and three-dimensional Helmholtz problems.
The estimator is based on an equilibrated flux that is computed by solving patchwise
mixed finite element problems. We show that the estimator is reliable up to a prefactor that
tends to one with mesh refinement or with polynomial degree increase. We also derive a fully
computable upper bound on the prefactor for several common settings of domains and boundary
conditions. This leads to a guaranteed estimate without any assumption on the mesh size or
the polynomial degree, though the obtained guaranteed bound may lead to large error
overestimation.  We next demonstrate that the estimator is locally efficient, robust in all
regimes with respect to the polynomial degree, and asymptotically robust with respect to the
wavenumber. Finally we present numerical experiments that illustrate our analysis and indicate
that our theoretical results are sharp.

\vspace{.5cm}
\noindent
{\sc Key words.} A posteriori error estimates; Finite element methods; Helmholtz problems; High order methods.
\end{abstract}

\section{Introduction}

In this work, we are concerned with the following Helmholtz equation: find
$u: \Omega \to \mathbb{C}$ such that
\begin{equation}
\label{eq_helmholtz_strong}
\left \{
\begin{array}{rcll}
-k^2 u - \Delta u &=& f & \text{ in } \Omega,
\\
u &=& 0 & \text{ on } \GD,
\\
\grad u {\cdot} \nn - iku &=& g & \text{ on } \GA.
\end{array}
\right.
\end{equation}
Here $\Omega \subset \mathbb R^d$, $d=2$ or $3$, is a Lipschitz domain with polygonal or polyhedral
boundary $\partial \Omega$ that is partitioned into two disjoint relatively open sets $\GD$ and $\GA$,
$\nn$ denotes the unit vector normal to $\partial \Omega$ pointing outward $\Omega$,
$f: \Omega \to \mathbb C$ and $g: \GA \to \mathbb C$ are prescribed data, and
the real number $k \in (0,+\infty)$ is the wavenumber.

Problem~\eqref{eq_helmholtz_strong} can accurately model the propagation of time-harmonic
acoustic or polarized electromagnetic waves. It is also insightful
for more elaborate wave propagation models employed, for instance, in elasticity
and electromagnetics, with industrial applications in acoustics, radar, and
medical or subsurface imaging, see, \eg~\cite{%
chavent_papanicolaou_sacks_symes_2012a,%
colton_kress_2012a,%
davidson_2010a,%
marburg_2002a,%
taus_zepedanunez_hewett_demanet_2017a}
and the references therein.

The above setting can represent the {\em scattering problem} by a sound-soft obstacle $D$,
see Figure~\ref{figure_domains}, (a) and (b). Then $\GD = \partial D$ represents the
boundary of the obstacle, $\GA = \partial \Omega_0$ where $\Omega_0$ is some ``computational box''
that includes and surrounds $D$, and $\Omega = \Omega_0 \setminus D$. An important scenario is
the case of a {\em non-trapping obstacle} $D$ in which both $D$ and $\Omega_0$ are star-shaped
with respect to a common center, see Figure~\ref{figure_domains},
(a); for a {\em trapping obstacle}, see Figure~\ref{figure_domains}, (b),
such a condition is not satisfied. In {\em cavity} problems,
see Figure~\ref{figure_domains}, (c), $\GD$ represents
the basis of a cavity and $\GA$ is typically planar.
The obstacle-free case with $\GD = \emptyset$ represents the propagation of a wave
in {\em free space}. Finally, the so-called \emph{interior problem} where $\GA = \emptyset$
is well-posed provided that $k^2$ is not an eigenvalue of the Laplace operator in $\Omega$.

\begin{figure}
\centering
\begin{minipage}{.30\linewidth}
\centering
\begin{tikzpicture}[scale=2]
\draw[dashed] (-1,-1) rectangle (1,1);
\draw (0,1) node[anchor=south] {$\GA$};

\draw (0,0) -- (.5,.5) -- (0,-.5) -- (-.5,.5) -- cycle;
\draw (-.01,-.03) node[anchor=north] {$D$};
\draw (.30,-.20) node[anchor=north] {$\GD$};
\draw (-.85,-.7) node[anchor=north] {$\Omega$};
\end{tikzpicture}

(a) non-trapping obstacle
\end{minipage}
\begin{minipage}{.30\linewidth}
\centering
\begin{tikzpicture}[scale=2]
\draw[dashed] (-1,-1) rectangle (1,1);
\draw (0,1) node[anchor=south] {$\GA$};

\draw (-.5,.5) -- (-.5,-.5) -- (0,-.5) -- (-.25,0) -- (0,.5) -- cycle;
\draw (.3,0) -- (.05,.25) -- (.5,0) -- (.05,-.25) -- cycle;
\draw (.35,.12) node[anchor=north] {\scriptsize{$D$}};
\draw (-.30,-.15) node[anchor=north] {$D$};
\draw (.2,-.22) node[anchor=north] {$\GD$};
\draw (-.85,-.7) node[anchor=north] {$\Omega$};
\end{tikzpicture}

(b) trapping obstacle
\end{minipage}
\begin{minipage}{.30\linewidth}
\centering
\begin{tikzpicture}[scale=2]
\draw[dashed] (-1,1) -- (1,1);
\draw (0,1) node[anchor=south] {$\GA$};

\draw (-1,1) -- (-0.8,-1) -- (-.7,-.5) -- (.6,-.7) -- (.9,-.7) -- (1,1);
\draw (.9,-.7) node[anchor=south east] {$\GD$};
\draw (-.7,-.20) node[anchor=north] {$\Omega$};
\end{tikzpicture}

(c) cavity
\end{minipage}
\caption{Examples of domains $\Omega$ and boundaries $\GD$ and $\GA$}
\label{figure_domains}
\end{figure}
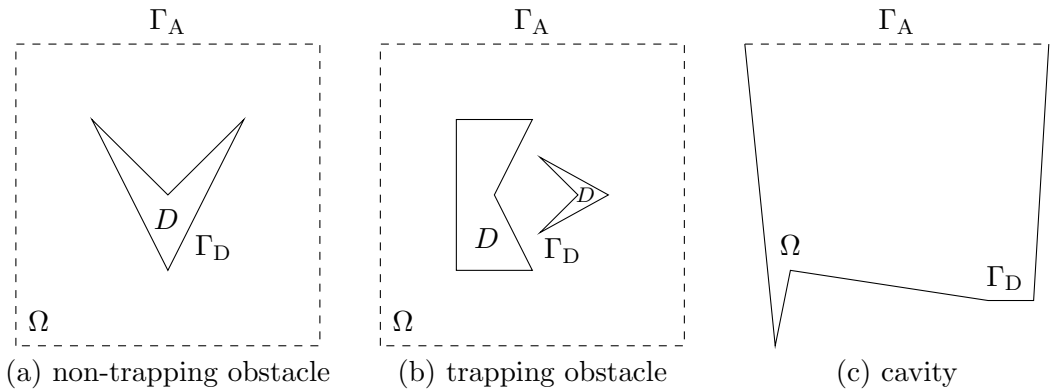

Numerical discretizations, often based on the finite element method, are nowadays vastly used
to find approximate solutions to~\eqref{eq_helmholtz_strong}. Here, a finite-dimensional space
$V_h$ is constructed which typically consists of piecewise polynomials of degree $p \geq 1$ defined
over a computational mesh $\TT_h$ of $\Omega$ with a maximal cell size $h$.
In this case, $(\frac{kh}{2\pi p})^{-1}$ is a measure of the number of degrees of freedom per wavelength.
Another important quantity, following~\cite{dorfler_sauter_2013a,melenk_sauter_2011a},
is the approximation factor\footnote{Our definition is slightly different
from~\cite{melenk_sauter_2011a} as we include the wavenumber $k$ in $\Cba$. This
way, the quantity $\Cba$ is adimensional, and invariant under rescaling.}
\[
\Cba \eq k
\sup_{\phi \in L^2(\Omega) \setminus \{0\}}
\inf_{v_h \in V_h} \frac{|u_\phi^\star - v_h|_{1,\Omega}}{\|\phi\|_{0,\Omega}},
\]
where $u_\phi^\star$ solves an adjoint problem to~\eqref{eq_helmholtz_strong}
with data $f = \phi$ and $g=0$ (see Section~\ref{sec:adjoint_solutions} for more details).
The real number $\Cba$ describes the ability of the discrete space $V_h$ to approximate (adjoint)
solutions to~\eqref{eq_helmholtz_strong}. This quantity combines a measure
of the approximation capacity of $V_h$ in the $H^1$-seminorm with the stability of the
associated adjoint problem. Incidentally we notice that $\Cba$ is bounded uniformly
with respect to $h$ and $p$, as can be seen by taking $v_h = 0$ in the above definition.
For the case of scattering by a smooth non-trapping obstacle, the following upper bound
is available:
\begin{equation}
\label{eq_theta_nontrapping}
\Cba \leq C(\widehat \Omega,\widehat \Gamma_{\rm D}) C(\kappa)
\left (
\frac{kh}{p} + k h_\Omega \left (\frac{kh}{p} \right )^p
\right ),
\end{equation}
where the constants $C(\widehat \Omega,\widehat \Gamma_{\rm D})$ and
$C(\kappa)$, respectively, only depend on the shape of the computational domain $\Omega$
(but not on its diameter $h_\Omega$) and on the mesh shape-regularity parameter $\kappa$,
see \cite[Proposition 5.3]{melenk_sauter_2011a}. We speak of {\em unresolved regime} when
$\frac{kh}{2 \pi p} > 1$, and of {\em resolved regime} when $\frac{kh}{2 \pi p} \leq 1$. We also speak of {\em asymptotic regime} when $\Cba \leq 1$ and {\em preasymptotic regime} otherwise.
We point out that the condition $\Cba \leq 1$ typically translates into stronger
requirements on $h$ and $p$ than the resolved regime.

The sesquilinear form $b({\cdot},{\cdot})$ associated with the boundary value
problem~\eqref{eq_helmholtz_strong} is typically not coercive, as it contains a
negative $L^2(\Omega)$ contribution, and thus does not lead to an energy norm.
The finite element solution (when it exists), is almost meaningless in the unresolved
regime, where even the best approximation of the solution $u$ of~\eqref{eq_helmholtz_strong}
in the discrete space $V_h$ is inaccurate. In the resolved regime, this best approximation
already starts to accurately represent $u$, but the finite element solution $u_h \in V_h$
is not necessarily quasi-optimal and is typically still inaccurate. This phenomena is known in
the literature as the ``pollution effect''. It is only in the asymptotic regime that the
quasi-optimality of the finite element solution is ensured, \cf~\cite{melenk_sauter_2011a}
and the references therein. The typical dependence~\eqref{eq_theta_nontrapping} encourages
the use of finite elements with high polynomial degree $p$ to solve problems with high wavenumber
$k$, and this is a usual practice, \cf~\cite{%
ainsworth_2004a,%
beriot_prinn_gabard_2016a,%
chaumontfrelet_nicaise_2019a,%
congreve_gedicke_perugia_2019a,%
melenk_sauter_2011a,%
taus_zepedanunez_hewett_demanet_2017a}
and the references therein.

The design of suitable {\em a posteriori} estimators is of paramount importance
both for control of the error between $u$ and $u_h$ and for the efficiency of
algorithms adaptively refining $h$ and/or $p$
\cite{demkowicz_2006a,Verf_13}.
Pioneering works on a posteriori error estimation for the Helmholtz
equation are reported in~\cite{babuska_ihlenburg_strouboulis_gangaraj_1997a,%
babuska_ihlenburg_strouboulis_gangaraj_1997b}.
The authors focus on first-order discretizations ($p=1$) of one-dimensional
problems and prove that in the asymptotic regime, the residual~\cite{Verf_13} and the
Zienkiewicz--Zhu~\cite{Zien_Zhu_simple_a_post_87}
estimators are reliable (yielding an error upper bound) and efficient (yielding an error lower bound).
Numerous refinements of these results, including goal-oriented estimation and $hp$ adaptivity,
can be found in~\cite{%
darrigrand_pardo_muga_2015a,%
irimie_bouillard_2001a,%
peraire_patera_1999a,%
sarrate_peraire_patera_1999a,%
stewart_hughes_1996a,%
stewart_hughes_1997a},
see also the references therein.

More recently, {\em residual estimators} for high-order finite
element as well as discontinuous Galerkin discretizations of two- and
three-dimensional problems have
been studied in~\cite{dorfler_sauter_2013a,sauter_zech_2015a},
where an upper bound is obtained even in the preasymptotic regime,
taking the form (when the data $f$ and $g$ are piecewise polynomial)
\begin{subequations}
\begin{equation}
\label{eq_upper_bound_sauter_dorfler}
(k^2 \|u-u_h\|_{0,\Omega}^2 + |u-u_h|_{1,\Omega}^2)^{\frac12}
\leq
\Cupper \eta,
\quad
\Cupper = C(\kappa) + \theta_{0}(\Cba),
\end{equation}
where $\eta$ is the a posteriori error estimator which is fully computable from $u_h$ and the
data. The constant $C(\kappa)$ is not computable, but only depends on the shape-regularity
parameter $\kappa$
of the mesh $\TT_h$, and $\lim_{t \to 0} \theta_{0}(t) = 0$. One sees that in the asymptotic regime
where $\Cba \leq 1$, the prefactor $\Cupper$ in~\eqref{eq_upper_bound_sauter_dorfler} simplifies
into an (unknown) constant that only depends on the mesh shape-regularity parameter. Moreover,
the authors in~\cite{dorfler_sauter_2013a,sauter_zech_2015a} prove that as long as there are
sufficiently many degrees of freedom per wavelength (essentially in the resolved regime where
$\frac{kh}{2\pi p} \leq 1$),
\begin{equation}
\label{eq_lower_bound_sauter_dorfler}
\eta_K \leq \ClowK (k^2 \|u-u_h\|_{0,\oK}^2 + |u-u_h|_{1,\oK}^2)^{\frac12}, \quad
\ClowK = C(\kappa_{\TT_K},p) \left ( 1 + \frac{kh_{\oK}}{p} \right ),
\end{equation}
\end{subequations}
where the factor $(1 + \frac{kh_{\omega_K}}{p})$ naturally appears for $\GA = \emptyset$,
$\eta = (\sum_{K \in \TT_h} \eta_K^2)^{\frac12}$, $\TT_K$ is the patch of elements
around the mesh element $K$ with shape-regularity $\kappa_{\TT_K}$ and the corresponding
subdomain $\oK$, and the constant $C(\kappa_{\TT_K},p)$ deteriorates as the polynomial
degree $p$ increases. We also refer the reader to \cite{hoppe_sharma_2013a}, where the
convergence of an adaptive discontinuous Galerkin method based on a residual estimator
and D\"orfler marking is studied.

Another approach to a posteriori error estimation is based on a construction of an
{\em equilibrated flux}, \cf~\cite{Dest_Met_expl_err_CFE_99,Lad_these_75,prager_synge_1947a}.
In this case, an unknown-constant-free upper bound is obtained for elliptic problems.
Moreover, it has been recently
shown~\cite{braess_pillwein_schoberl_2009a,ern_vohralik_2015a,ErnVo:20}
that such a strategy provides a $p$-robust estimator which means that the lower bound
does not depend on the polynomial degree $p$. The recent work~\cite{congreve_gedicke_perugia_2019a}
proposes to use this type of estimators for discontinuous Galerkin discretizations of the Helmhotz
problem. The authors consider a simpler configuration of~\eqref{eq_helmholtz_strong} with
$\GD = \emptyset$ and $d=2$ and evaluate the error in the $H^1(\Omega)$-seminorm $|u-u_h|_{1,\Omega}$
only. While the proposed estimator is $p$-robust, both reliability and efficiency only hold up to the uncomputable scaled error terms $k \|u-u_h\|_{0,\Omega}$ and
$k^{\frac12}\|u-u_h\|_{0,\GA}$. These terms asymptotically
vanish, but their actual size is unknown.

In the present work, we follow some of the main arguments developed in~\cite{dorfler_sauter_2013a}
but we employ equilibrated flux estimators instead of residual estimators.
We establish our results in the {\em energy norm}
\begin{equation}
\label{eq_en}
\norm{u-u_h}_{1,k, \Omega}^2
\eq
k^2 \|u-u_h\|_{0,\Omega}^2 + k\|u-u_h\|_{0,\GA}^2 + |u-u_h|_{1,\Omega}^2,
\end{equation}
and prove the global upper bound
\begin{subequations}
\begin{equation} \label{eq_rel}
\norm{u-u_h}_{1,k,\Omega} \leq \Cupper \eta, \qquad
\Cupper=\sqrt{2} + \theta_1(\Cba),
\end{equation}
where $\eta$ is a fully computable equilibrated flux estimator
and $\theta_1(t):=\sqrt{2t + 2t^2}$. The factor $\sqrt{2}$
can be further decreased to the optimal value of $1$ modulo additional technical developments
that we detail in Theorem~\ref{theorem_upper_bounds} below. Thus, asymptotically, \eqref{eq_rel}
is {\em unknown-constant-free}, in improvement of~\eqref{eq_upper_bound_sauter_dorfler}, and
additionally the entire energy norm (including the scaled $L^2(\Omega)$ and $L^2(\GA)$ terms)
is controlled in contrast to~\cite{congreve_gedicke_perugia_2019a}. Though the asymptotic nature
of the approximability factor $\Cba$ is known, \cf~\eqref{eq_theta_nontrapping}, $\Cba$ cannot
be computed or estimated from~\eqref{eq_theta_nontrapping}, so that the bound~\eqref{eq_rel} is not guaranteed
in general. We succeed in removing this remaining deficiency and find a fully computable
upper bound on $\Cba$ in several configurations of interest,
see Theorem~\ref{theorem_cba} below. Remarkably, the bound~\eqref{eq_rel} then
becomes {\em guaranteed}, and this in {\em any regime} (unresolved, resolved, asymptotic),
though the upper bound may largely overestimate the error.

The second main property of our estimators is the {\em local efficiency}
\begin{equation}
\label{eq_lower_bound_flux}
\eta_K \leq \ClowK \norm{u-u_h}_{1,k,\oK}, \quad \ClowK = C(\kappa_{\TT_K})\left (
1 + \left (\frac{kh_{\oK}}{p}\right )^{\frac12} + \frac{kh_{\oK}}{p}
\right ),
\end{equation}
\end{subequations}
for every mesh element $K$, where the term $(\frac{kh_{\omega_K}}{p})^{\frac12}$
appears for $\GA \neq \emptyset$.
Importantly, in contrast to~\eqref{eq_lower_bound_sauter_dorfler},
the constant $C(\kappa_{\TT_K})$ in~\eqref{eq_lower_bound_flux} is independent of the
polynomial degree $p$ (only depends on the local shape-regularity parameter $\kappa_{\TT_K}$),
and, in contrast to~\cite{congreve_gedicke_perugia_2019a}, the scaled $L^2(\Omega)$ and $L^2(\GA)$
terms are included. Since $\Cupper$ is also independent of $p$, we conclude that our a posteriori
error estimator is $p$-robust in all regimes. Moreover the estimator is robust with respect to
the wavenumber $k$ in the asymptotic regime where $\Cba\le 1$ (and $\frac{kh}{2 \pi p} \leq 1$).

Our manuscript is organized as follows.
In Section~\ref{sec:settings}, we make precise the functional and discrete settings and
state our main results. We perform a general analysis of the relationships between the
energy error~\eqref{eq_en}
and dual norms of the residual in Section~\ref{sec:abstract_error_estimates}, and we employ
these results in the context of flux equilibration in Section~\ref{sec:equilibrated_estimator}.
In Section~\ref{sec_pre_asymptotic}, we prove our fully computable upper bounds on the factor $\Cba$.
We present numerical experiments that illustrate our findings in
Section~\ref{sec:numerical_experiments} and draw our conclusions in Section~\ref{sec:conclusion}.
Finally, in Appendix~\ref{sec_tcba}, we establish an intermediate result related to boundary
and volume data.

\section{Setting and main results}
\label{sec:settings}

This section details the setting and presents our main results.

\subsection{Variational formulation}

We recast problem
\eqref{eq_helmholtz_strong} into a weak form that consists in
finding $u \in H^1_\GD(\Omega)$ such that
\begin{equation}
\label{eq_helmholtz_weak}
b(u,v) = (f,v) + (g,v)_\GA \qquad \forall v \in H^1_\GD(\Omega),
\end{equation}
where
\begin{equation}
\label{eq_b}
b(u,v) \eq -k^2 (u,v) -ik(u,v)_\GA + (\grad u,\grad v)
\end{equation}
and
\begin{equation*}
H^1_\GD(\Omega) \eq \left \{ v \in H^1(\Omega)| \; v = 0 \text{ on } \GD \right \}.
\end{equation*}

Above and hereafter, for $m \in \mathbb N$ and an open set $\mathcal O$,
$H^m(\mathcal O)$ denotes the Sobolev space of order $m$.
We also employ the notations
$\|{\cdot}\|_{m,\mathcal O}$ and $|{\cdot}|_{m,\mathcal O}$ for the
norm and semi-norm on $H^m(\mathcal O)$. Also, $({\cdot},{\cdot})_{\mathcal O}$
denotes the $L^2(\mathcal O)$ inner-product, and we drop the subscript
when $\mathcal O = \Omega$. We refer to~\cite{adams_fournier_2003a,ciarlet_2002a,girault_raviart_1986a}
for the definition and essential properties of the Sobolev spaces.
In the following, we assume that the operator $B:
H^1_\GD(\Omega)\to (H^1_\GD(\Omega))'$ associated with the sesquilinear form $b$
is a bounded isomorphism. This assumption holds for all $k>0$ if $|\GA|>0$, and it
allows for $|\GA| = 0$ provided that $k^2$ is not an eigenvalue of the
Laplace operator in $\Omega$. We equip $H^1_\GD(\Omega)$ with the {\em energy norm}
\begin{equation} \label{eq_en_norm}
\norm{v}_{1,k,\Omega}^2 \eq k^2 \|v\|_{0,\Omega}^2 + k\|v\|_{0,\GA}^2 + |v|_{1,\Omega}^2
\qquad \forall v \in H^1_\GD(\Omega).
\end{equation}
Remark that this choice leads to the sharp continuity estimate
\begin{equation}
\label{eq_cont_en}
|b(\phi,v)| \leq \norm{\phi}_{1,k,\Omega}\norm{v}_{1,k,\Omega} \qquad \forall \phi, v \in H^1(\Omega),
\end{equation}
which is of particular interest in the context of goal-oriented error estimation
\cite{darrigrand_pardo_muga_2015a,peraire_patera_1999a,sarrate_peraire_patera_1999a}.

\subsection{Discrete solution}

Consider a mesh $\TT_h$ of $\Omega$ that consists of triangular
or tetrahedral elements $K$ such that $\bigcup_{K \in \TT_h} \overline{K} = \overline{\Omega}$ and such that for two distinct elements $K_\pm \in \TT_h$, the intersection
$\overline{K_+} \cap \overline{K_-}$ is either empty, a single vertex,
a full edge, or a full face of $K_+$ and $K_-$. We further require
$\TT_h$ to be compatible with the partition $\overline{\GD} \cup \overline{\GA}$
of $\partial \Omega$, \ie, that each boundary mesh face $\overline{F} \subset \partial \Omega$
is either included in $\overline{\GD}$ or in $\overline{\GA}$.
For every element $K \in \TT_h$, we also define its diameter $h_K$ and its inscribed ball radius $\rho_K$ by
\begin{equation*}
h_K \eq \sup_{\xx,\yy \in K} |\xx - \yy|, \quad
\rho_K \eq \sup \left \{ r > 0| \; \exists \xx \in K; \; B(\xx,r) \subset K
\right \}.
\end{equation*}
We assume that the mesh is shape-regular in the sense that
every $K \in \TT_h$ satisfies
\begin{equation*}
\kappa_K \eq \frac{\rho_K}{h_K} \geq \kappa > 0
\end{equation*}
for a fixed constant $\kappa$. These hypotheses are standard~\cite{ciarlet_2002a} and
not restrictive in particular in the sense that they do not preclude strong grading
of the mesh. We will also sometimes use the notation $h \eq \max_{K \in \TT_h} h_K$.
If $\TT \subset \TT_h$ is a subset of cells $K$, we write
$\kappa_{\TT} \eq \min_{K \in \TT} \kappa_K$.

For an integer $p\geq1$, consider the space $\PP_p(K)$ of polynomials
on $K$ of degree at most $p$, the broken space
\begin{equation*}
\PP_p(\TT_h) \eq \left \{
v \in L^2(\Omega)| \; v|_K \in \PP_p(K); \; \forall K \in \TT_h\right \},
\end{equation*}
and define the approximation space
\begin{equation} \label{eq_V_h}
V_h \eq \PP_p(\TT_h) \cap H^1_\GD(\Omega).
\end{equation}
The discrete version of~\eqref{eq_helmholtz_weak} seeks for $u_h \in V_h$
such that
\begin{equation}
\label{eq_helmholtz_discrete}
b(u_h,v_h) = (f,v_h) + (g,v_h)_\GA \qquad \forall v_h \in V_h.
\end{equation}
In what follows we assume that there is at least
one solution to~\eqref{eq_helmholtz_discrete}. Uniqueness is not required for the
present analysis to hold, \ie, $u_h\in V_h$ can be any solution to~\eqref{eq_helmholtz_discrete}.

\subsection{Approximation properties of adjoint solutions}
\label{sec:adjoint_solutions}

The approximation properties of the space $V_h$ from~\eqref{eq_V_h} play a
central role in the forthcoming analysis.
Specifically, following~\cite{dorfler_sauter_2013a,melenk_sauter_2011a},
we consider the real number $\Cba$ defined in the introduction,
\ie
\begin{equation}
\label{eq_Cba_bis}
\Cba \eq k
\sup_{\phi \in L^2(\Omega) \setminus \{0\}}
\inf_{v_h \in V_h} \frac{|u_\phi^\star - v_h|_{1,\Omega}}{\|\phi\|_{0,\Omega}},
\end{equation}
where $u_\phi^\star$ denotes the unique (adjoint) solution in $H^1_\GD(\Omega)$
such that
\begin{equation} \label{eq_helmholtz_weak_adjoint}
b(w,u_\phi^\star) = (w,\phi) \quad \forall w \in H^1_\GD(\Omega).
\end{equation}
In strong form, we have $-k^2 u_\phi^\star - \Delta u_\phi^\star = \phi$
in $\Omega$, $u_\phi^\star= 0$ on $\GD$, and
$\grad u_\phi^\star {\cdot} \nn + iku_\phi^\star = 0$ on $\GA$.
The real number $\Cba$ is independent of the data $f$ and $g$ but implicitly
depends on $\Omega$, $\GD$, $k$, and $V_h$. It combines the ability of the
discrete space $V_h$ to approximate the solution $u_\phi^\star$
with the stability of the adjoint problem~\eqref{eq_helmholtz_weak_adjoint}.
Indeed, we have
\begin{equation}
\label{eq_best_approximation}
k \inf_{v_h \in V_h} |u_\phi^\star - v_h|_{1,\Omega} \leq \Cba \|\phi\|_{0,\Omega} \qquad \forall \phi \in L^2(\Omega).
\end{equation}

The dependence of $\Cba$ on $\Omega$, $\GD$, $k$, and $V_h$
(or less precisely but more concretely on the discretization parameters $h$ and $p$)
is not known in general. As previously stated, $\Cba$ is bounded uniformly with respect
to $h$ and $p$ (take $v_h = 0$ in~\eqref{eq_Cba_bis}). However, sharper bounds on $\Cba$
are expected to hold by taking other approximating functions $v_h\in V_h$, leading to
upper bounds on $\Cba$ that depend on $h$ and $p$.  For instance, for the case of scattering
by a smooth non-trapping obstacle~\cite{melenk_sauter_2011a}, see
also~\cite{chaumontfrelet_nicaise_2018a,chaumontfrelet_nicaise_2019a},
the upper bound~\eqref{eq_theta_nontrapping} is available. More generally,
as long as the domain $\Omega$ features an elliptic regularity shift for the Laplace operator
(see \eg~\cite{grisvard_1992a}), one can state, \cf\  Section~\ref{sec_pre_asymptotic} below, that
\begin{equation} \label{eq:bnd_Cba_shift}
\Cba \leq C(\widehat \Omega,\widehat \Gamma_{\rm D},k,\kappa) \left (\frac{kh}{p}\right )^\varepsilon
\end{equation}
for some $\varepsilon = \varepsilon(\widehat \Omega,\widehat \Gamma_{\rm D}) > 0$.
Here and above, $\widehat \Omega \eq (1/h_\Omega)\Omega$ and
$\widehat \Gamma_{\rm D} \eq (1/h_\Omega) \GD$ denote scaled versions of $\Omega$ and $\GD$
which are independent of the size of the computational domain $\Omega$.
We provide a computable upper bound on $\Cba$ in
some settings of interest in Theorem~\ref{theorem_cba} below.


In addition to~\eqref{eq_Cba_bis}, we also introduce
\begin{equation}
\label{eq_tCba}
\tCba \eq k^{\frac12} \sup_{\psi \in L^2(\GA) \setminus \{0\}} \inf_{v_h \in V_h}
\frac{|\widetilde u_\psi^\star - v_h|_{1,\Omega}}{\|\psi\|_{0,\GA}},
\end{equation}
where $\widetilde u_\psi^\star$ is the (adjoint) solution in $H^1_\GD(\Omega)$
such that
\begin{equation} \label{eq_helmholtz_adjoint_bc}
b(w,\widetilde u_\psi^\star) = (w,\psi)_\GA \quad \forall w \in H^1_\GD(\Omega).
\end{equation}
In strong form, we have $-k^2 \widetilde u_\psi^\star - \Delta \widetilde u_\psi^\star = 0$
in $\Omega$, $\widetilde u_\psi^\star= 0$ on $\GD$, and
$\grad \widetilde u_\psi^\star {\cdot} \nn + ik \widetilde u_\psi^\star = \psi$ on $\GA$.
The quantity $\tCba$ is similar to $\Cba$ since it measures the best-approximation
error and the stability of the adjoint problem~\eqref{eq_helmholtz_adjoint_bc}
which features a boundary right-hand side instead of a volume right-hand side as
in~\eqref{eq_helmholtz_weak_adjoint}. As in the case of volume data, $\tCba$ is bounded uniformly with respect to $h$ and $p$ (take $v_h = 0$ in~\eqref{eq_tCba}) and if the domain $\Omega$
features an elliptic regularity shift, we have
\begin{equation}\label{eq:bnd_tCba_shift}
\tCba \leq C(\widehat \Omega,\widehat \Gamma_{\rm D},k,\kappa)
\left (\frac{kh}{p} \right )^{\widetilde \varepsilon},
\end{equation}
where $\widetilde \varepsilon = \widetilde \varepsilon(\widehat \Omega,\widehat \Gamma_{\rm D}) > 0$.
In addition, under a reasonable assumption that is
satisfied in all the configurations depicted in Figure~\ref{figure_domains},
we prove in Appendix~\ref{sec_tcba} the estimate
\begin{equation}
\label{eq_upper_bound_tcba}
\tCba \leq
C(\widehat \Omega,\widehat\Gamma_{\rm D})
\Cqi(\kappa)
\left (\left (\frac{kh}{p}\right )^{\frac12} + \frac{kh}{p} \right ) + \left (
C(\widehat \Omega,\widehat\Gamma_{\rm D}) \frac{1}{k h_\Omega}
\left ( 1 + \frac{1}{k h_\Omega}\right ) + 2 \right ) \Cba,
\end{equation}
where $\Cqi(\kappa)$ is a quasi-interpolation constant that only
depends on the shape-regularity parameter $\kappa$~\cite{hiptmair_pechstein_2017a,melenk_2005a}.
In particular, in the high-wavenumber regime when $k h_\Omega \to \infty$, we have
\begin{equation*}
\tCba \leq 2 \Cba + \mathcal O\left (\left (\frac{kh}{p}\right)^{\frac12}\right ).
\end{equation*}

\subsection{Equilibrated flux reconstruction}

For each vertex $\aaa$ in the set of vertices $\VV_h$ of the mesh $\TT_h$, set
\begin{subequations}
\label{eq_db}
\begin{equation}
d_\aaa \eq \psi_\aaa \pi_h^p (f) + \psi_\aaa k^2 u_h - \grad \psi_\aaa {\cdot} \grad u_h,
\end{equation}
and
\begin{equation}
b_\aaa \eq
\left \{
\begin{array}{ll}
- \psi_\aaa \tpi_h^p(g) - \psi_\aaa i k u_h & \text{ on } \partial \oa \cap \GA,
\\
0 & \text{ on } \partial \oa \setminus \GA.
\end{array}
\right .
\end{equation}\end{subequations}
Here $u_h$ is a finite element solution to~\eqref{eq_helmholtz_discrete}
and $\psi_\aaa$ is the ``hat function'' associated with the vertex $\aaa$: the unique function in
$\PP_1(\TT_h) \cap H^1(\Omega)$ such that $\psi_\aaa(\aaa) = 1$ and $\psi_\aaa(\aaa') = 0$
for all $\aaa' \in \VV_h \setminus \{\aaa\}$. Moreover, the elementwise/facewise $L^2$ projectors
$\pi_h^q$ and $\tpi_h^q$ are respectively defined for any integer $q \geq 0$, any $v \in L^2(\Omega)$,
and any $w \in L^2(\GA)$ by
\begin{subequations}
\label{eq_proj}
\begin{alignat}{4}
& \pi_h^q (v)|_K \in \PP_q(K), \quad & &
(\pi_h^q (v)|_K,\phi)_K = (v|_K,\phi)_K \quad & & \forall \phi \in \PP_q(K), \, \forall K \in \TT_h,\\
& \tpi_h^q (w)|_F \in \PP_q(F), & &
(\tpi_h^q (w)|_F,\phi)_F = (v|_F,\phi)_F & & \forall \phi \in \PP_q(F), \, \forall F \in \FF_h,
\, F \subset \overline{\GA},
\end{alignat}
\end{subequations}
where $\FF_h$ is the set of mesh faces.

For an integer $q \geq0$, let $\PPP_q(K)$ be the set of vector-valued
functions that have all components in $\PP_q(K)$.
We introduce the space $\RT_q(K) \eq \xx \PP_q(K) + \PPP_q(K)$
from~\cite{nedelec_1980a,raviart_thomas_1977a} and define
\begin{eqnarray*}
\PP_q(\TT)
&\eq&
\left \{
v \in L^2(\omega)| \; v|_K \in \PP_q(K); \; \forall K \in \TT
\right \},
\\
\RT_q(\TT)
&\eq&
\left \{
\vv \in \LL^2(\omega)| \; \vv|_K \in \RT_q(K); \; \forall K \in \TT
\right \}
\end{eqnarray*}
for any subset of mesh elements $\TT \subset \TT_h$ with the corresponding open subdomain $\omega$.
In particular, for each vertex $\aaa \in \VV_h$, we denote by $\TT_\aaa$ the patch of all elements
$K \in \TT_h$ having $\aaa$ as a vertex and by $\oa$ the corresponding open subdomain.

Following~\cite{braess_pillwein_schoberl_2009a,Dest_Met_expl_err_CFE_99,ern_vohralik_2015a},
we consider the following inexpensive and fully parallel post-processing procedure:

\begin{defi}[Equilibrated flux reconstruction]
\label{def_sigma_a}
For every vertex $\aaa \in \VV_h$, let
\begin{equation}
\label{eq_definition_flux}
\ssig^\aaa_h \eq
\underset{\substack{
\ttau^\aaa_h \in \RT_{p+1}(\TT_\aaa) \cap \HH(\ddiv,\oa)
\\
\div \ttau^\aaa_h = d_\aaa \; \text{\em in } \oa
\\
\ttau^\aaa_h {\cdot} \nn = b_\aaa \; \text{\em on } \Gamma_\aaa
}}{\operatorname{argmin}}
\|\ttau_h^\aaa + \psi_\aaa \grad u_h\|_{0,\oa},
\end{equation}
where $\Gamma_{\aaa}$ is the boundary $\partial \omega_{\aaa}$ without those faces of
$\Gamma_{\mathrm{D}}$ sharing the vertex $\aaa$. The equilibrated flux reconstruction
is computed by summing up the patchwise contributions (after zero extension) as
\begin{equation} \label{eq_definition_flux_glob}
\ssig_h \eq \sum_{\aaa \in \VV_h} \ssig_h^\aaa \in \RT_{p+1}(\TT_h) \cap \HH(\ddiv,\Omega).
\end{equation}
\end{defi}

Since $d_\aaa \in \mathcal P_{p+1}(\TT_\aaa)$ and $b_\aaa \in \mathcal P_{p+1}(F)$
for each face $F \subset \partial \oa$, we see that the minimization set in~\eqref{eq_definition_flux}
is non-empty when
\begin{equation}
\label{tmp:compatibility_condition}
(d_\aaa,1)_\oa = (b_\aaa,1)_{\partial \oa} \qquad \forall \aaa \notin \overline{\GD}.
\end{equation}
Since $\psi_\aaa \in V_h$ when $\aaa \notin \overline{\GD}$, we have
from~\eqref{eq_helmholtz_discrete} in combination with~\eqref{eq_proj}
\begin{equation*}
(\psi_\aaa \pi_h^p (f),1)_\oa
=
(f,\psi_\aaa)
=
b(u_h,\psi_\aaa) - (g,\psi_\aaa)_\GA
= b(u_h,\psi_\aaa) - (\psi_\aaa \tpi_h^p(g),1)_\GA.
\end{equation*}
As a result, the compatibility condition~\eqref{tmp:compatibility_condition} follows from
the definition~\eqref{eq_b} of $b(\cdot,\cdot)$ and
\begin{align*}
(\psi_\aaa \pi_h^p (f),1)_\oa
={}&
-k^2(u_h,\psi_\aaa)_\oa -ik (u_h,\psi_\aaa)_{\partial \oa \cap \GA} +
(\grad u_h,\grad \psi_\aaa)_\oa - (\psi_\aaa \tpi_h^p(g),1)_\GA
\\
={}&
(\grad \psi_\aaa {\cdot} \grad u_h - k^2 \psi_\aaa u_h,1)_\oa + (b_\aaa,1)_{\partial \oa}.
\end{align*}
Then the uniqueness of the minimizer in~\eqref{eq_definition_flux}
follows from standard convexity arguments.

\begin{rema}[Equilibrated flux]
In practice, the local fluxes $\ssig_h^\aaa$ are obtained by
invoking the Euler--Lagrange conditions of the constrained
minimization problem~\eqref{eq_definition_flux}. This leads to
the problem of finding the unique pair
$(\ssig_h^\aaa,r^\aaa_h) \in \RT_{p+1}(\TT_\aaa) \cap \HH(\ddiv,\oa)
\times \PP^0_{p+1}(\TT_\aaa)$ such that
$\ssig_h^\aaa {\cdot} \nn = b_\aaa$ on $\Gamma_\aaa$ and
\begin{equation*}
\left \{
\begin{array}{rcll}
(\ssig_h^\aaa,\vv_h)_{\oa} - (r^\aaa_h,\div \vv_h)_{\oa} &=& - (\psi_\aaa \grad u_h,\vv_h)_{\oa}
&
\forall \vv_h \in \RT_{p+1}(\TT_\aaa) \cap \HH_\GA(\ddiv,\oa),
\\
(\div \ssig_h^\aaa,v_h)_{\oa} &=& (d_\aaa,v_h)_{\oa}
&
\forall v_h \in \PP_{p+1}^0(\TT_\aaa),
\end{array}
\right .
\end{equation*}
where $\HH_\GA(\ddiv,\oa)$ imposes zero normal flux through $\Gamma_\aaa$
and where $\PP_{p+1}^0(\TT_\aaa)$ is composed of the functions
$v_h \in \PP_{p+1}(\TT_\aaa)$ such that $(v_h,1)_\oa = 0$ if $\aaa \not \in \overline{\GD}$
whereas $\PP_{p+1}^0(\TT_\aaa) \eq \PP_{p+1}(\TT_\aaa)$ otherwise.
\end{rema}

\subsection{Data oscillation}

For each element $K \in \TT_h$, we define
\begin{equation}\label{eq_osc}
\osc_K(f,g) \eq \frac{h_K}{\pi}\|f - \pi_h^p (f)\|_{0,K} +
C_{{\rm tr},K}
\left (
\frac{h_K}{\pi} \right )^{\frac12} \|g - \tpi_h^p(g)\|_{0,\partial K \cap \GA}
\end{equation}
with
\begin{equation*}
C_{{\rm tr},K}^2 \eq N_{K,\GA} \frac{3}{4\pi}
\left ( 1 + \frac{1}{\pi} \right )
\left (\frac{h_K}{\rho_K}\right )^d,
\end{equation*}
where $N_{K,\GA}$ is the number of faces of $K$ that belong to $\GA$, and the remaining
part of $C_{{\rm tr},K}$ comes from the standard trace inequality for one face of $K$,
\cf~\cite[Section 4.2]{cheddadi_fucik_prieto_vohralik_2009a}.
Owing to this definition of $\osc_K(f,g)$ and to the Poincar\'e inequality
(recall that $K$ is convex)
\begin{equation*}
\|v\|_{0,K} \leq \frac{h_K}{\pi} |v|_{1,K}
\end{equation*}
as well as to the trace inequality
\begin{equation*}
\|v\|_{0,\partial K \cap \partial \GA}
\leq
C_{{\rm tr},K} \bigg( \frac{h_K}{\pi}\bigg)^{\frac12}|v|_{1,K},
\end{equation*}
we have
\begin{equation}
\label{eq_upper_bound_oscK}
\|f-\pi_h^p (f)\|_{0,K}\|v\|_{0,K}
+
\|g - \tpi_h^p(g)\|_{0,\partial K \cap \GA}\|v\|_{0,\partial K \cap \GA}
\leq
\osc_K(f,g) |v|_{1,K},
\end{equation}
for any function $v \in H^1(K)$ with zero mean value on $K$.
If $\TT = \{K\}$ is a collection of several elements $K$,
we set
\begin{equation*}
\osc_\TT(f,g) \eq \left (
\sum_{K \in \TT} \osc_K(f,g)^2
\right )^{\frac12},
\end{equation*}
and we omit the subscript when $\TT = \TT_h$.

\subsection{Main results}

Let $u$ be the weak solution to~\eqref{eq_helmholtz_weak} and let $u_h$ be any
conforming finite element approximation solving~\eqref{eq_helmholtz_discrete}.
Let the equilibrated flux reconstruction $\ssig_h$ be prescribed by Definition~\ref{def_sigma_a}
and define the local and global error estimators as
\begin{equation}
\label{eq_definition_eta}
\eta_K \eq \|\ssig_h + \grad u_h\|_{0,K}, \quad
\eta \eq \left (
\sum_{K \in \mathcal T_h} \eta_K^2
\right )^{\frac12}.
\end{equation}
Recall finally the approximability factors $\Cba$ and $\tCba$ defined respectively
by~\eqref{eq_Cba_bis} and~\eqref{eq_tCba}. Our main result on the error upper bound is the following.

\begin{theo}[Upper bounds]
\label{theorem_upper_bounds}
The following global upper bound holds true:
\begin{equation}
\label{eq_est_up}
\norm{u-u_h}_{1,k,\Omega} \leq \Cupper
\left (
\eta + \osc(f,g)
\right ),
\end{equation}
where
\begin{equation} \label{eq:def_Cupper}
\Cupper \eq \min(\sqrt{2} + \tilde \theta_1(\Cba),1+\tilde\theta_2(\Cba,\tCba))
\end{equation}
and
\begin{align}\label{eq_Cupperc}
0\le \tilde\theta_1(t)\eq {}&\sqrt{\left (\frac{1}{2} + \sqrt{\frac{1}{4} + t^2}\right ) +
 \left (\frac{1}{2} + \sqrt{\frac{1}{4} +
t^{2}}\right )^2 + t^2} - \sqrt{2} \\
\leq {}& \theta_1(t) \eq \sqrt{2t + 2t^2}, \nonumber
\end{align}
together with
\begin{align} \label{eq_Cupperf}
0 \le \tilde\theta_2(t,\tilde t) \eq {}& \sqrt{\left (
\frac{1}{2} + \sqrt{\frac{1}{4} + t^2}
\right )^2 + t^2 + \tilde t^2} -1 \\
\nonumber
\leq {}& \theta_2(t,\tilde t) \eq \sqrt{t + 2t^2 + \tilde t^2}.
\end{align}%
\end{theo}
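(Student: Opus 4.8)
The plan is to derive the upper bound by combining an abstract residual-based estimate with the flux-equilibration machinery. First I would introduce the residual functional $\mathcal{R} \in (H^1_\GD(\Omega))'$ defined by $\langle \mathcal{R}, v \rangle := (f,v) + (g,v)_\GA - b(u_h,v)$, so that $\norm{u-u_h}_{1,k,\Omega}$ is controlled by the dual norm of $\mathcal{R}$ \emph{together with} correction terms coming from the sign-indefinite and imaginary parts of $b$. The key algebraic identity is $b(u-u_h,v) = \langle \mathcal{R},v\rangle$; testing with $v = u-u_h$ and taking real and imaginary parts gives $|u-u_h|_{1,\Omega}^2 - k^2\|u-u_h\|_{0,\Omega}^2 = \Re\langle \mathcal{R},u-u_h\rangle$ and $k\|u-u_h\|_{0,\GA}^2 = -\Im\langle \mathcal{R},u-u_h\rangle$. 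The reliability analysis of Section~\ref{sec:abstract_error_estimates} (which I may assume) converts these into a bound of the form $\norm{u-u_h}_{1,k,\Omega} \le \mathcal{F}(\|\mathcal{R}\|_\star, \Cba, \tCba)$, where the $\Cba$ and $\tCba$ factors enter precisely because one must estimate $k\|u-u_h\|_{0,\Omega}$ and $k^{1/2}\|u-u_h\|_{0,\GA}$ via the adjoint solutions $u^\star_\phi$ and $\widetilde u^\star_\psi$, using Galerkin orthogonality to subtract an arbitrary $v_h \in V_h$ and then invoking~\eqref{eq_best_approximation} and its boundary analogue. The two arguments inside the $\min$ in~\eqref{eq:def_Cupper} correspond to whether one uses only $\Cba$ (treating the boundary $L^2$-term crudely) or both $\Cba$ and $\tCba$ (treating it sharply), which is why the first variant carries the $\sqrt 2$ and the second the optimal $1$.

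Second, I would bound the dual norm $\|\mathcal{R}\|_\star$ by the equilibrated flux estimator $\eta$ plus data oscillation. Here the standard Prager--Synge-type argument applies: for any $v \in H^1_\GD(\Omega)$, using the equilibration property $\div \ssig_h = \pi_h^p(f) + k^2 u_h$ in $\Omega$ and $\ssig_h\cdot\nn = -\tpi_h^p(g) - iku_h$ on $\GA$ (which follows from summing the patchwise constraints in Definition~\ref{def_sigma_a} and the partition-of-unity property $\sum_\aaa \psi_\aaa = 1$), one integrates by parts to get
\begin{equation*}
\langle \mathcal{R},v\rangle = -(\ssig_h + \grad u_h, \grad v) + (f - \pi_h^p(f), v) + (g - \tpi_h^p(g), v)_\GA .
\end{equation*}
The first term is bounded by $\eta\,|v|_{1,\Omega}$ by Cauchy--Schwarz; the oscillation terms are handled elementwise via~\eqref{eq_upper_bound_oscK} after subtracting the mean of $v$ on each element (the constant part being absorbed by the equilibration identities), yielding a bound by $\osc(f,g)\,|v|_{1,\Omega}$, hence by $\osc(f,g)\norm{v}_{1,k,\Omega}$. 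Taking the supremum over $v$ gives $\|\mathcal{R}\|_\star \le \eta + \osc(f,g)$.

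Third, I would assemble the pieces: substituting $\|\mathcal{R}\|_\star \le \eta + \osc(f,g)$ into the abstract bound $\mathcal{F}$ gives~\eqref{eq_est_up} with $\Cupper$ as in~\eqref{eq:def_Cupper}, once one verifies that the explicit functions $\tilde\theta_1$ and $\tilde\theta_2$ in~\eqref{eq_Cupperc}--\eqref{eq_Cupperf} are exactly what the abstract estimate produces (this is a routine but careful optimization: one writes $\norm{u-u_h}_{1,k,\Omega}^2 = X + Y + Z$ with $X = |u-u_h|_{1,\Omega}^2$, $Y = k^2\|u-u_h\|^2_{0,\Omega}$, $Z = k\|u-u_h\|^2_{0,\GA}$, bounds $X \le \|\mathcal{R}\|_\star\norm{u-u_h}_{1,k,\Omega} + Y$, $Y^{1/2} \le \Cba\|\mathcal{R}\|_\star + \dots$, $Z^{1/2} \le \tCba\|\mathcal{R}\|_\star + \dots$ — the $\dots$ again reducing to $\|\mathcal{R}\|_\star$ via the adjoint best-approximation — and solves the resulting quadratic inequality in $\norm{u-u_h}_{1,k,\Omega}/\|\mathcal{R}\|_\star$). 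Finally the elementary inequalities $\tilde\theta_1 \le \theta_1$ and $\tilde\theta_2 \le \theta_2$ are checked by monotonicity/convexity estimates on the radicals. The main obstacle I anticipate is the sharp bookkeeping in this last optimization — tracking the indefinite $-k^2\|\cdot\|^2_{0,\Omega}$ term and the two adjoint-stability factors simultaneously so that the prefactor collapses to the stated closed forms (and in particular to $\sqrt2$, resp. $1$, as $\Cba,\tCba\to 0$) — rather than any single analytic step, each of which is individually standard.
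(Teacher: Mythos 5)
Your proposal is correct and follows essentially the same route as the paper: the duality arguments of Section~\ref{sec:abstract_error_estimates} (Aubin--Nitsche with $\Cba$ and $\tCba$, Galerkin orthogonality, the real/imaginary-part identities for $b(u-u_h,u-u_h)$, and the quadratic inequality of Lemma~\ref{lemma_polynomial}) yielding Propositions~\ref{proposition_upper_bound_full_norm_coarse} and~\ref{proposition_global_upper_bound_fine}, combined with the Prager--Synge-type bound of the dual residual norm by $\eta+\osc(f,g)$ in Proposition~\ref{proposition_global_upper_bound}. No gaps.
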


Note that the bounds $0\le \tilde\theta_1(t)\le \theta_1(t)$ and
$0\le \tilde\theta_2(t,\tilde t) \le \theta_2(t,\tilde t)$ in \eqref{eq_Cupperc}
and \eqref{eq_Cupperf}, respectively, are straightforward, so that the only estimate
to prove is~\eqref{eq_est_up}. This result is established in
Propositions~\ref{proposition_upper_bound_full_norm_coarse},
\ref{proposition_global_upper_bound_fine}, and~\ref{proposition_global_upper_bound}.

\begin{rema}[Upper bounds in the asymptotic regime] \label{rem_ass_reg}
When $\Cba \to 0$ and $\tCba \to 0$, the presence of $\tilde\theta_2$
in~\eqref{eq:def_Cupper}
implies that $\Cupper \to 1$. This indeed happens in most configurations of
practical interest when $\frac{h}{p}\to0$ at fixed $k$, see~\eqref{eq:bnd_Cba_shift} and~\eqref{eq:bnd_tCba_shift} or~\eqref{eq_upper_bound_tcba}. Thus, the a
posteriori error estimate~\eqref{eq_est_up} is asymptotically
constant-free. Unfortunately, the approximation factors $\Cba$ and $\tCba$ cannot be computed explicitly. To circumvent this issue, we establish a computable estimate
on $\Cba$ in Theorem~\ref{theorem_cba} below, to be used in conjunction with $\tilde\theta_1$
in~\eqref{eq:def_Cupper}. This will make the bound~\eqref{eq_est_up}
guaranteed but, unfortunately, the computable bound on $\Cba$ is in general too rough and leads to a large error overestimation. This is in particular reflected by the fact that this computable bound
does not tend to zero as $\frac{h}{p}\to0$ at fixed $k$ in some cases.
\end{rema}

Our main result on the error lower bound is as follows.

\begin{theo}[Lower bounds]
\label{theorem_lower_bounds}
The following local lower bounds also hold true:
\begin{equation} \label{eq_est_low_loc}
\eta_K \leq \ClowK
\norm{u-u_h}_{1,k,\oK} +
C(\kappa_{\TT_K}) \osc_{\TT_K}(f,g) \quad \forall K \in \TT_h
\end{equation}
with
\[
\ClowK \eq C(\kappa_{\TT_K}) \left (
1 + \left (\frac{kh_{\oK}}{p}\right )^{\frac12} + \frac{kh_{\oK}}{p}
\right ),
\]
where $C(\kappa_{\TT_K})$ only depends on the shape-regularity parameter $\kappa_{\TT_K}$
of the mesh in the patch $\TT_K$ of elements sharing a vertex with $K$ and where $h_\oK$ is the
diameter of the corresponding subdomain $\oK$.

In addition, the following global lower bound holds true:%
\begin{equation} \label{eq_est_low_glob}
\eta \leq \Clow
\norm{u-u_h}_{1,k,\Omega} + C(\kappa) \osc(f,g),
\end{equation}
where
\[
\Clow \eq C(\kappa) \left (
1 + \left (\frac{kh}{p}\right )^{\frac12} + \frac{kh}{p}
\right ),
\]
and $C(\kappa)$ only depends on the shape regularity-parameter $\kappa$ of the mesh.
\end{theo}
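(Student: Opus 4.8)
The plan is to establish the local estimate \eqref{eq_est_low_loc} first, and then obtain the global bound \eqref{eq_est_low_glob} by summing over patches and using that each element lies in a bounded number of patches (a consequence of shape-regularity). For a fixed vertex $\aaa \in \VV_h$, the quantity $\|\ssig_h^\aaa + \psi_\aaa \grad u_h\|_{0,\oa}$ is the value of the constrained minimization in \eqref{eq_definition_flux}. The first step is a \emph{stable discrete constrained-minimization bound}: by the $p$-robust results of \cite{braess_pillwein_schoberl_2009a,ern_vohralik_2015a,ErnVo:20} on patchwise equilibration, this minimum is bounded, up to a constant depending only on $\kappa_{\TT_\aaa}$ (and crucially \emph{not} on $p$), by the continuous minimization over $\HH(\ddiv,\oa)$ with the same divergence and normal-trace constraints, which in turn is controlled by the dual norm of the appropriate residual functional on $H^1_\star(\oa)$ (the hat-function-weighted broken residual associated with $u_h$, $d_\aaa$, $b_\aaa$). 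Writing this residual out using the definitions \eqref{eq_db} of $d_\aaa,b_\aaa$ and the discrete equation \eqref{eq_helmholtz_discrete}, and testing against $\psi_\aaa$-weighted functions, this dual norm is bounded by $\|\grad(u-u_h)\|_{0,\oa}$, $k\|u-u_h\|_{0,\oa}$, $k\|u-u_h\|_{0,\oa\cap\GA}$, plus the data-oscillation terms. The inverse/Poincaré–Friedrichs and trace inequalities of the ``Data oscillation'' subsection, together with \eqref{eq_upper_bound_oscK}, are used here to absorb $f-\pi_h^pf$ and $g-\tpi_h^pg$ into $\osc_{\TT_\aaa}(f,g)$.

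The second step is to track the $k$-dependent factors. The $L^2(\oa)$ term $k\|u-u_h\|_{0,\oa}$ and the boundary term $k\|u-u_h\|_{0,\oa\cap\GA}$ enter the residual multiplied by the test function (not its gradient), so bounding them by $\norm{u-u_h}_{1,k,\oa}$ via the definition \eqref{eq_en_norm} costs a Poincaré/trace constant on the patch, i.e.\ a factor $\tfrac{h_{\oa}}{\pi}$ respectively $C_{\mathrm{tr}}(\kappa)h_{\oa}^{1/2}$. Combined with the leading $k$, this produces exactly the factors $\tfrac{kh_{\oa}}{p}$ (volume term; the $1/p$ improvement over $h_{\oa}$ is immaterial here, one simply keeps $kh_{\oa}$, but $kh_{\oa}/p \le kh_{\oa}$ so either is fine — matching the statement we retain $kh_{\oa}/p$ which only makes the bound easier) and $(\tfrac{kh_{\oa}}{p})^{1/2}$ (boundary term, present only when $\GA\neq\emptyset$), while the gradient term contributes the leading $1$. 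Summing the contributions of the (finitely many) vertices of $K$ and passing from $\psi_\aaa$-weighted norms to unweighted ones (using $0\le\psi_\aaa\le 1$) yields \eqref{eq_est_low_loc} with $\ClowK$ as claimed.

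For the global bound, square \eqref{eq_est_low_loc}, sum over $K\in\TT_h$, and use that the overlap number of the patches $\{\TT_K\}$ is bounded in terms of $\kappa$; the max of $h_{\oK}$ is $\lesssim h$, giving $\Clow$ with the stated structure. The main obstacle is the first step: one must invoke the $p$-robust stable flux reconstruction on each patch \emph{and} verify that the right-hand data of the local problems $(d_\aaa,b_\aaa)$ — which here carry the extra Helmholtz zeroth-order and Robin terms $\psi_\aaa k^2 u_h$ and $\psi_\aaa i k u_h$ beyond the usual Poisson setting — are handled so that these terms appear in the bound only through $k\|u-u_h\|_{0,\oa}$ and $k\|u-u_h\|_{0,\oa\cap\GA}$ with the correct powers of $h_{\oa}/p$; this is precisely where the extra $\bigl(\tfrac{kh_{\oK}}{p}\bigr)^{1/2}+\tfrac{kh_{\oK}}{p}$ terms in $\ClowK$ (absent in the pure Poisson case) originate, and getting these powers sharp — in particular the $1/2$ power on the boundary term and the fact that the constant multiplying them is $p$-independent — requires care with the trace inequality on the patch and with the $p$-robust equilibration estimate.
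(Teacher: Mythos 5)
Your overall architecture is the same as the paper's: bound $\eta_K$ by the sum over the vertices of $K$ of the patchwise quantities $\|\ssig_h^\aaa+\psi_\aaa\grad u_h\|_{0,\oa}$, invoke the $p$-robust stability of the discrete patchwise minimization to pass to the continuous minimization, identify that continuous minimum with the localized dual residual norm $\|\RR_h(u_h)\|_{-1,\aaa}$, absorb the difference between $\RR_h(u_h)$ and $\RR(u_h)$ into $\osc_{\TT_\aaa}(f,g)$, bound $\|\RR(u_h)\|_{-1,\aaa}$ by $\norm{u-u_h}_{1,k,\oa}$, and finally sum with finite patch overlap. All of that matches Lemmas~\ref{lem_lower_bounds}, \ref{lem_data_osc}, \ref{lemma_dual_characterization}, \ref{lemma_local_estimations} and Propositions~\ref{prop_local_lower_bounds}--\ref{prop_global_lower_bound}.

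There is, however, a genuine gap in your second step, and the sentence ``$kh_{\oa}/p \le kh_{\oa}$ so either is fine --- we retain $kh_{\oa}/p$ which only makes the bound easier'' has the logic reversed. The theorem claims $\ClowK \propto 1+(kh_{\oK}/p)^{1/2}+kh_{\oK}/p$; since this constant is \emph{smaller} than $1+(kh_{\oK})^{1/2}+kh_{\oK}$, it is a \emph{stronger} assertion, and it cannot be deduced from the weaker bound that your route (hat function plus patch Poincar\'e and trace inequalities, i.e.\ the constant $\Clowerhat$ of Lemma~\ref{lem_lower_bounds}) actually delivers. The missing ingredient is the second branch of Lemma~\ref{lem_lower_bounds}: one uses Galerkin orthogonality~\eqref{eq_res_orth} to write $\langle\RR(u_h),\psi_\aaa v\rangle=\langle\RR(u_h),\psi_\aaa v-Q_{hp}^\aaa(\psi_\aaa v)\rangle$ for an $hp$ quasi-interpolation operator $Q_{hp}^\aaa$ with locally supported image in $V_h$ satisfying~\eqref{eq_quasi_interpolation} (Melenk in 2D, Hiptmair--Pechstein in 3D); the approximation property of $Q_{hp}^\aaa$ is what converts $kh_\aaa$ into $kh_\aaa/p$ while keeping all constants depending only on $\kappa_{\TT_\aaa}$. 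Without this step your argument proves the local and global lower bounds only with $kh_{\oK}$ and $(kh_{\oK})^{1/2}$ in place of $kh_{\oK}/p$ and $(kh_{\oK}/p)^{1/2}$, which is not the stated theorem. (Your treatment of the global bound by squaring, summing, and using bounded patch overlap is fine, if marginally less sharp than the paper's direct summation of the vertex-patch estimates.)
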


\begin{rema}[$p$-robustness, $k$-robustness]
Since the approximation factor $\Cba$ defined in~\eqref{eq_Cba_bis}
is bounded independently of the mesh size $h$ and polynomial degree
$p$ (taking $v_h = 0$ in~\eqref{eq_Cba_bis}), the same holds for $\Cupper$. 
Moreover, trivially, $\frac{kh}{p} \leq k h_\Omega$.
Then, \eqref{eq_est_up} and either~\eqref{eq_est_low_loc} or~\eqref{eq_est_low_glob}
establish the equivalence of the error and the estimator independently of the mesh size $h$ and
polynomial degree $p$, leading in particular to 
polynomial-degree robustness in all regimes.
More precisely, for any wavenumber $k$, there exists a constant $C$, independent of $h$ and $p$,
such that the effectivity index $\Cupper \eta / \norm{u-u_h}_{1,k,\Omega}$ is bounded by $C$ for any $h$ and $p$; here $C$ can depend on $k$ and $h_\Omega$.
Moreover, robustness with respect to the wavenumber $k$ is achieved in the asymptotic regime where $\Cba\le 1$ (and $\frac{kh}{p}\le1$), so that $\Cupper$
and $\ClowK$ or $\Clow$ are bounded independently of $k$, $h$, and $p$. Indeed, $\Cba\le 1$ (and $\frac{kh}{p}\le1$), there exists a constant $C$, independent of $k$, $h$, and $p$, such that the effectivity index $\Cupper \eta / \norm{u-u_h}_{1,k,\Omega}$ is bounded by $C$ for any $k$, $h$, and $p$; here C is of order $1$ for shape-regular meshes.
\end{rema}

The local lower bound~\eqref{eq_est_low_loc} is established in
Proposition~\ref{prop_local_lower_bounds}. Though~\eqref{eq_est_low_glob} is easily obtained
from~\eqref{eq_est_low_loc} by summation over the mesh cells, we provide an estimate with a
sharper constant in Proposition~\ref{prop_global_lower_bound}.

Crucially, for several configurations of interest, the upper bound of Theorem~\ref{theorem_upper_bounds}
can be turned into a {\em guaranteed estimate} in {\em any regime} starting from the unresolved one
(on any mesh $\TT_h$ and for any polynomial degree $p$) through the following computable
upper bounds on the approximability factor $\Cba$.

\begin{theo}[Computable bounds on $\Cba$]
\label{theorem_cba} $ $

Case 1a) {\em (Scattering by a non-trapping obstacle)}. Assume that
$\Omega = \Omega_0 \setminus \overline{D}$, $\GA = \partial \Omega_0$,
and $\GD = \partial D$, where $\Omega_0,D \subset \mathbb R^d$ are two
open, bounded, connected sets such that $\overline D$ is a proper subset of $\Omega_0$ and
assume that the subset
\begin{equation} \label{eq_def_Omega_x0}
\mathcal{O}_{\GD,\GA} :=\{ \xx_0 \in \mathbb R^d \; | \;
(\xx - \xx_0) \cdot \nn \leq 0 \; \forall \xx \in \GD, \
(\xx - \xx_0) \cdot \nn >    0 \; \forall \xx \in \GA \}
\end{equation}
is nonempty (recall that $\nn$ points outward $\Omega$). Let
\begin{equation} \label{eq_Cstab}
\Cstab \eq \inf_{\xx_0 \in \mathcal{O}_{\GD,\GA}} \Cstabx,
\end{equation}
with
\begin{equation*}
\Cstabx \eq \frac{1}{h_\Omega} \left \{
\sup_{\xx \in \Omega} |\xx-\xx_0| +
\sup_{\xx \in \GA}
\left (
2 (\xx - \xx_0) {\cdot} \nn + \frac{|(\xx-\xx_0) \times \nn|^2}{(\xx-\xx_0) {\cdot} \nn}
\right )
\right \}.
\end{equation*}
Then, we have
\begin{equation}
\label{eq_cba_coarse}
\Cba \leq \left (
\left (
(d-1) + \Cstab k h_\Omega
\right )
+
\left (
(d-1) + \Cstab k h_\Omega
\right )^2
\right )^{\frac12}.
\end{equation}

Case 1b) {\em (Wave propagation in free space)}. Assume that $\Omega$ is convex and
$\GD = \emptyset$, so that $\mathcal{O}_{\GD,\GA}=\Omega$. Let $\Cstab$ be defined as above.
Then, we have
\begin{equation}
\label{eq_cba_fine}
\Cba \leq \Ci \left (d + \Cstab k h_\Omega\right ) \frac{kh}{p^\beta},
\end{equation}
where $\Ci$ is any approximation constant satisfying
\begin{equation} \label{eq_approx}
\inf_{v_h \in V_h} |v - v_h|_{1,\Omega}
\leq
\Ci \frac{h}{p^\beta} |v|_{2,\Omega}
\qquad \forall v \in H^1_\GD(\Omega) \cap H^2(\Omega),
\end{equation}
and such that it only depends on the mesh regularity parameter $\kappa$; typically $\beta=0$ or $\beta=1$
in~\eqref{eq_cba_fine} and~\eqref{eq_approx}.

Case 2a) {\em (Interior problem)}. Assume that $\GA = \emptyset$. Then
\begin{equation} \label{eq_cba_coarse_int}
\Cba \leq k \max_{j \in \mathbb N} \frac{\sqrt{\lambda_j}}{|\lambda_j - k^2|},
\end{equation}
where $\lambda_j$ is the $j$-th eigenvalue of the Laplace operator in $\Omega$ with Dirichlet
boundary conditions.

Case 2b) {\em (Convex interior problem)}. Assume that $\GA = \emptyset$ and that $\Omega$ is convex. Then
\begin{equation} \label{eq_cba_fine_int}
\Cba \leq \Ci \left (1 + \frac{k^2}{\min_{j \in \mathbb N}|\lambda_j - k^2|} \right )
\frac{kh}{p^\beta},
\end{equation}
where $\Ci$ is any approximation constant satisfying~\eqref{eq_approx}.
\end{theo}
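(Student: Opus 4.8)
The plan is to reduce the bound in each case to an a priori estimate on the adjoint solution $u_\phi^\star$ of~\eqref{eq_helmholtz_weak_adjoint}, after which the definition~\eqref{eq_Cba_bis} of $\Cba$ yields the claim at once. In the ``coarse'' cases 1a and 2a (no factor $\tfrac{kh}{p^\beta}$) I would simply take $v_h = 0 \in V_h$ in~\eqref{eq_Cba_bis}, so that $\Cba \leq k \sup_{\phi \neq 0} |u_\phi^\star|_{1,\Omega}/\|\phi\|_{0,\Omega}$, and the whole task becomes proving an explicit stability estimate $k|u_\phi^\star|_{1,\Omega} \leq C \|\phi\|_{0,\Omega}$. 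In the ``fine'' cases 1b and 2b I would instead apply the approximation property~\eqref{eq_approx} to $v = u_\phi^\star$, giving $\Cba \leq \Ci \tfrac{kh}{p^\beta} \sup_{\phi \neq 0} |u_\phi^\star|_{2,\Omega}/\|\phi\|_{0,\Omega}$, so that it suffices to establish the elliptic-regularity bound $|u_\phi^\star|_{2,\Omega} \leq C' \|\phi\|_{0,\Omega}$.

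For Case 1a the heart is a Rellich--Morawetz multiplier argument with the radial field $\boldsymbol m(\xx) \eq \xx - \xx_0$, $\xx_0 \in \mathcal{O}_{\GD,\GA}$. Testing~\eqref{eq_helmholtz_weak_adjoint} with $w = u_\phi^\star$ and separating real and imaginary parts gives the two energy identities $|u_\phi^\star|_{1,\Omega}^2 - k^2\|u_\phi^\star\|_{0,\Omega}^2 = \Re(u_\phi^\star,\phi)$ and $k\|u_\phi^\star\|_{0,\GA}^2 = -\Im(u_\phi^\star,\phi)$. Multiplying the strong adjoint equation $-\Delta u_\phi^\star - k^2 u_\phi^\star = \phi$ by $\overline{2\,\boldsymbol m\cdot\grad u_\phi^\star}$, integrating by parts, and eliminating $|u_\phi^\star|_{1,\Omega}^2$ via the first energy identity produces $2k^2\|u_\phi^\star\|_{0,\Omega}^2 = (d-2)\Re(u_\phi^\star,\phi) + \Re\int_\Omega \phi\,\overline{2\,\boldsymbol m\cdot\grad u_\phi^\star} - B$, where $B$ collects the boundary integrals over $\partial\Omega$. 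On $\GD$ one uses $u_\phi^\star = 0$, hence $\grad u_\phi^\star$ is purely normal there, to see that the $\GD$-part of $-B$ equals $\int_{\GD}(\boldsymbol m\cdot\nn)|\grad u_\phi^\star|^2 \leq 0$ because $(\xx-\xx_0)\cdot\nn \leq 0$ on $\GD$; this term is simply discarded. On $\GA$ one substitutes the adjoint impedance condition $\grad u_\phi^\star\cdot\nn = -ik u_\phi^\star$, splits $\grad u_\phi^\star$ into normal and tangential parts, and completes the square in the tangential gradient; the indefinite cross term is absorbed, leaving the $\GA$-part of $-B$ bounded by $\int_{\GA} k^2|u_\phi^\star|^2\big(2(\xx-\xx_0)\cdot\nn + |(\xx-\xx_0)\times\nn|^2/((\xx-\xx_0)\cdot\nn)\big)$, which is precisely the quantity entering $\Cstabx$ in~\eqref{eq_Cstab}. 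Estimating the remaining terms by Cauchy--Schwarz, using $\sup_\Omega|\xx-\xx_0| \leq h_\Omega\Cstabx$ and $k^2\|u_\phi^\star\|_{0,\GA}^2 \leq k\|u_\phi^\star\|_{0,\Omega}\|\phi\|_{0,\Omega}$ (second energy identity), and applying a Young inequality together with the first energy identity to re-express $|u_\phi^\star|_{1,\Omega}^2$, one arrives at a quadratic inequality for $\mu \eq k^2\|u_\phi^\star\|_{0,\Omega}/\|\phi\|_{0,\Omega}$ with root bound $\mu \leq (d-1) + \Cstabx k h_\Omega$; taking the infimum over $\xx_0$ replaces $\Cstabx$ by $\Cstab$. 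The first energy identity then gives $k^2|u_\phi^\star|_{1,\Omega}^2 \leq (a + a^2)\|\phi\|_{0,\Omega}^2$ with $a \eq (d-1) + \Cstab k h_\Omega$, which is~\eqref{eq_cba_coarse} after taking $v_h = 0$.

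Case 1b reuses the same multiplier estimate (now with $\GD = \emptyset$, so the $\GD$-term is absent) to get $k^2\|u_\phi^\star\|_{0,\Omega} \leq ((d-1) + \Cstab k h_\Omega)\|\phi\|_{0,\Omega}$, combined with elliptic regularity on the convex domain $\Omega$: since $-\Delta u_\phi^\star = \phi + k^2 u_\phi^\star \in L^2(\Omega)$, the Rellich-type identity for second derivatives yields $|u_\phi^\star|_{2,\Omega} \leq \|\Delta u_\phi^\star\|_{0,\Omega}$, where one checks that the only boundary term which could carry a bad sign, the one involving the tangential gradient of $\grad u_\phi^\star\cdot\nn = -ik u_\phi^\star$, is purely imaginary and hence drops from the real-part estimate, while the curvature term is nonnegative by convexity. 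Thus $|u_\phi^\star|_{2,\Omega} \leq \|\phi\|_{0,\Omega} + k^2\|u_\phi^\star\|_{0,\Omega} \leq (d + \Cstab k h_\Omega)\|\phi\|_{0,\Omega}$, and~\eqref{eq_approx} gives~\eqref{eq_cba_fine}. Cases 2a and 2b are handled by a direct spectral expansion: with $\GA = \emptyset$ we have $u_\phi^\star \in H^1_0(\Omega)$, and writing $\phi = \sum_j \phi_j e_j$ in the $L^2$-orthonormal basis of Dirichlet eigenfunctions $-\Delta e_j = \lambda_j e_j$ gives $u_\phi^\star = \sum_j \phi_j(\lambda_j - k^2)^{-1} e_j$; hence $|u_\phi^\star|_{1,\Omega}^2 = \sum_j \lambda_j(\lambda_j - k^2)^{-2}|\phi_j|^2 \leq \big(\max_j \sqrt{\lambda_j}/|\lambda_j - k^2|\big)^2\|\phi\|_{0,\Omega}^2$ (the maximum being attained because the quotient tends to $0$ as $j \to \infty$), which with $v_h = 0$ is~\eqref{eq_cba_coarse_int}; and $\|u_\phi^\star\|_{0,\Omega}^2 = \sum_j (\lambda_j - k^2)^{-2}|\phi_j|^2 \leq (\min_j|\lambda_j - k^2|)^{-2}\|\phi\|_{0,\Omega}^2$, so that for convex $\Omega$ one gets $|u_\phi^\star|_{2,\Omega} \leq \|\Delta u_\phi^\star\|_{0,\Omega} \leq \|\phi\|_{0,\Omega} + k^2\|u_\phi^\star\|_{0,\Omega} \leq (1 + k^2/\min_j|\lambda_j - k^2|)\|\phi\|_{0,\Omega}$, which with~\eqref{eq_approx} is~\eqref{eq_cba_fine_int}.

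The main obstacle is Case 1a: carrying out the Rellich--Morawetz integration by parts while tracking the exact constants — in particular, justifying the multiplier identity for the merely Lipschitz (polyhedral) domain $\Omega = \Omega_0 \setminus \overline{D}$ with $\phi \in L^2(\Omega)$ (where $u_\phi^\star$ need not belong to $H^2$, so one argues by density or by approximating the domain), assembling the $\GA$-boundary terms by a completion of the square that recovers precisely the expression defining $\Cstabx$, and choosing the Young parameter so that the resulting quadratic collapses to the clean bound $\Cba^2 \leq a + a^2$. The convex-domain $H^2$-bounds used in Cases 1b and 2b are classical (Grisvard), the only point needing care being that the impedance boundary contribution drops out upon taking real parts; Cases 2a and 2b are otherwise routine.
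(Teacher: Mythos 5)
Your proposal is correct and follows essentially the same route as the paper: reduce each case to an a priori bound on $u_\phi^\star$ (taking $v_h=0$ for the coarse cases and invoking~\eqref{eq_approx} for the fine ones), prove the $L^2$ stability estimate $k^2\|u_\phi^\star\|_{0,\Omega}\le((d-1)+\Cstab kh_\Omega)\|\phi\|_{0,\Omega}$ by the Rellich--Morawetz multiplier $\xx-\xx_0$ with exactly the same treatment of the $\GD$ and $\GA$ boundary terms, use the convex-domain $H^2$ shift for Case 1b, and expand in Dirichlet eigenfunctions for Cases 2a--2b. The only (harmless) deviations are that you pass from the $L^2$ bound to the $H^1$ bound by a direct Cauchy--Schwarz argument where the paper optimizes a Young parameter -- both yield $\Cba^2\le a+a^2$ -- and that you sketch a re-derivation of the impedance $H^2$ estimate that the paper simply cites.
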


\begin{rema}[Bounds of Theorem~\ref{theorem_cba}] \label{rem_Thm_sba}
In contrast to the a priori bound~\eqref{eq:bnd_Cba_shift}
which gives $\Cba\to0$ when $\frac{h}{p}\to0$ with fixed $k$,
our computable upper bounds
on $\Cba$ of Theorem~\ref{theorem_cba} do not share this property in cases
1a) and 2a). Further work is needed to close this gap, and we leave it here as an open question.
\end{rema}

The proof of Theorem~\ref{theorem_cba} is carried out in Section~\ref{sec_pre_asymptotic}.
The estimates~\eqref{eq_cba_coarse_int} and~\eqref{eq_cba_fine_int} follow from
standard properties of spectral decomposition and we only sketch the proof
at the beginning of Section~\ref{sec_pre_asymptotic}. The more involved
upper bounds~\eqref{eq_cba_coarse} and~\eqref{eq_cba_fine} are respectively
established in Sections~\ref{sec_cba_coarse} and~\ref{sec_cba_fine}.

Finally, the constant $\Cstab$ can be easily bounded
when $\GA$ has a simple geometry (which is usually the case in scattering
applications):

\begin{rema}[Constant $\Cstab$]
\label{remark_stability}
In the Case 1a) (resp. 1b) above, if $\Omega_0$ (resp. $\Omega$) is a circle or a
ball centered at $\xx_0$, then  $\Cstab \leq \frac32$.
If $\Omega_0$ (resp. $\Omega$) is a square centered at
$\xx_0$, then $\Cstab\leq \frac{3+\sqrt{2}}{2\sqrt{2}}$.
Finally, if $\Omega_0$ (resp. $\Omega$) is a cube centered at
$\xx_0$, then $\Cstab \leq \frac{3+\sqrt{3}}{2\sqrt{3}}$.
\end{rema}

Similarly, several expressions for the constant $\Ci$ can be
found in the literature~\cite{arcangeli_gout_1976a,carstensen_gedicke_rim_2012a,kobayashi_tsuchiya_2014a,liu_kikuchi_2010a},
leading to~\eqref{eq_approx} with $\beta=0$:

\begin{rema}[Constant $\Ci$] \label{rem_Ci}
A more precise definition than~\eqref{eq_approx} is
\[
\Ci \eq \frac{p^\beta}{h}\sup_{v \in H^1_\GD(\Omega) \cap H^2(\Omega)\setminus\{0\}}
\inf_{v_h \in V_h} \frac{|v - v_h|_{1,\Omega}}{|v|_{2,\Omega}}.
\]
The value of $\Ci$ can then be estimated by considering any interpolation operator.
For any $v \in H^1_\GD(\Omega) \cap H^2(\Omega)$, we can define its $\mathcal P_1$ nodal
interpolant $\II_h (v) \in V_h$ by
\begin{equation*}
(\II_h (v))(\aaa) = v(\aaa) \qquad \forall \aaa \in \VV_h.
\end{equation*}
Then we have (with $\beta=0$)
\begin{equation*}
\Ci \le \sup_{v \in H^1_\GD(\Omega) \cap H^2(\Omega)\setminus\{0\}}
\frac{|v - \II_h(v)|_{1,\Omega}}{h|v|_{2,\Omega}},
\end{equation*}
where  $\Ci$ only depends on $\kappa$. In particular, following Theorem 1.1 of
\cite{arcangeli_gout_1976a}, we have $\Ci \leq 3/\kappa$ if $d=2$ and $\Ci \leq 8/\kappa$
if $d=3$. Furthermore, sharper estimates are available for the specific case of triangles when
$d=2$, and we refer the reader
to~\cite{carstensen_gedicke_rim_2012a,kobayashi_tsuchiya_2014a,liu_kikuchi_2010a}.
In particular, for a mesh formed by isosceles right-angled triangles,
we will employ the estimate $\Ci \leq 0.493/\sqrt 2$ from~\cite{liu_kikuchi_2010a}.
\end{rema}

\begin{rema}[Eigenvalues]
Apart from very simple domain geometries, analytic expressions of
the eigenvalues $\lambda_j$ appearing
in~\eqref{eq_cba_coarse_int} and~\eqref{eq_cba_fine_int} are not
available. However, guaranteed a posteriori error estimators can be
employed to reliably estimate their value, and thus $\Cba$,
see~\cite{cances_dusson_maday_stamm_vohralik_2017a,Cars_Ged_LB_eigs_14,Liu_fram_eigs_15}
and the references therein. We also refer the reader to \cite{carstensen_storn_2018a},
where related arguments are employed in the context of least-squares discretizations.
\end{rema}

\begin{rema}[Data oscillation]
Actually, a slightly sharper upper bound than~\eqref{eq_est_up}, with data oscillation
integrated to the local error estimators, follows from~\eqref{eq_global_upper_bound} below. Also,
upon modifying the treatment of the data $f$ and $g$ as in~\cite{Dol_Ern_Voh_hp_16,ern_vohralik_2015a},
one could replace in $\osc_K(f,g)$ in~\eqref{eq_osc} the projections $\pi_h^{p}$ and $\tpi_h^{p}$ by
$\pi_h^{p+1}$ and $\tpi_h^{p+1}$ and gain an additional order for data oscillation in the upper
bound~\eqref{eq_est_up}. This is, however,  not possible in the lower bounds~\eqref{eq_est_low_loc}
and~\eqref{eq_est_low_glob}. We do not present these extensions here for the clarity of exposition.
\end{rema}

\section{Relation between the error and the dual norm of the residual}
\label{sec:abstract_error_estimates}

The aim of this section is to obtain lower and upper bounds on the
error between $u$ and $u_h$ by dual residual norms. We define the {\em residual}
$\RR(u_h) \in \left (H^1_\GD(\Omega)\right )'$ stemming from the weak
formulation~\eqref{eq_helmholtz_weak} by
\begin{equation} \label{eq_res_def}
\langle \RR(u_h),v \rangle \eq b(u-u_h,v) = (f,v) + (g,v)_\GA - b(u_h,v)
\qquad \forall v \in H^1_{\GD}(\Omega).
\end{equation}
We further introduce the residual norm
\begin{equation}\label{eq_res_norm_def}
\|\RR(u_h)\|_{-1,\Omega} \eq \sup_{v \in H^1_\GD(\Omega) \setminus \{0\}}
\frac{|\langle \RR(u_h),v\rangle|}{|v|_{1,\Omega}}.
\end{equation}
Note that in virtue of~\eqref{eq_helmholtz_discrete}, we have
\begin{equation}
\label{eq_res_orth}
\langle \RR(u_h),v_h \rangle = 0 \qquad \forall v_h \in V_h.
\end{equation}

\subsection{Global upper bounds}

We start by an upper bound on the $L^2(\Omega)$-norm
using the approximation factor $\Cba$ defined in~\eqref{eq_Cba_bis}.
The proof follows the lines of Lemma 4.7 of~\cite{dorfler_sauter_2013a}
and employs the usual Aubin--Nitsche duality argument.

\begin{lemm}[$L^2(\Omega)$-norm upper bound by the dual norm of the residual]
\label{lemma_upper_bound_l2}
We have
\begin{equation}
\label{eq_global_upper_bound_l2}
k\|u-u_h\|_{0,\Omega} \leq \Cba \|\RR(u_h)\|_{-1,\Omega}.
\end{equation}
\end{lemm}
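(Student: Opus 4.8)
The plan is to run the standard Aubin--Nitsche duality argument, exploiting the Galerkin orthogonality~\eqref{eq_res_orth} together with the very definition of the approximability factor $\Cba$ through~\eqref{eq_best_approximation}. Write $e\eq u-u_h$; the claim is trivial if $e=0$, so assume $e\neq 0$.

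First I would test the adjoint problem~\eqref{eq_helmholtz_weak_adjoint} with the datum $\phi=e\in L^2(\Omega)$, producing $u_e^\star\in H^1_\GD(\Omega)$ with $b(w,u_e^\star)=(w,e)$ for all $w\in H^1_\GD(\Omega)$. Choosing $w=e$ gives
\[
\|e\|_{0,\Omega}^2 = (e,e) = b(e,u_e^\star) = \langle \RR(u_h),u_e^\star\rangle,
\]
where the last equality is just the definition~\eqref{eq_res_def} of the residual. Next I would insert an arbitrary $v_h\in V_h$ and use~\eqref{eq_res_orth} to write $\langle \RR(u_h),u_e^\star\rangle = \langle \RR(u_h),u_e^\star-v_h\rangle$, and then bound this by the residual norm~\eqref{eq_res_norm_def}:
\[
\|e\|_{0,\Omega}^2 \leq \|\RR(u_h)\|_{-1,\Omega}\,|u_e^\star-v_h|_{1,\Omega}.
\]
Taking the infimum over $v_h\in V_h$ and invoking~\eqref{eq_best_approximation} with $\phi=e$, namely $k\inf_{v_h\in V_h}|u_e^\star-v_h|_{1,\Omega}\leq \Cba\|e\|_{0,\Omega}$, yields $k\|e\|_{0,\Omega}^2 \leq \Cba\|\RR(u_h)\|_{-1,\Omega}\|e\|_{0,\Omega}$, and dividing by $\|e\|_{0,\Omega}>0$ gives the assertion.

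There is no real obstacle here: the only point requiring a little care is the non-Hermitian nature of $b$, which is precisely why the adjoint solution $u_e^\star$ is defined via $b(w,u_e^\star)=(w,e)$ (test function in the first slot) rather than via $b(u_e^\star,w)$; with this convention the identity $\|e\|_{0,\Omega}^2=b(e,u_e^\star)$ holds exactly, and everything else is routine. This mirrors the proof of Lemma~4.7 in~\cite{dorfler_sauter_2013a}.
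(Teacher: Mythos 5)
Your proof is correct and follows exactly the same Aubin--Nitsche duality argument as the paper: the adjoint solution you call $u_e^\star$ is the paper's $\xi$, and the chain of identities (test with $w=u-u_h$, invoke Galerkin orthogonality, bound by the residual dual norm, and conclude via the definition of $\Cba$) is identical. No gaps.
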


\begin{proof}
We introduce $\xi$ as the unique element of $H^1_\GD(\Omega)$ such that
$b(w,\xi) = (w,u-u_h)$ for all $w \in H^1_\GD(\Omega)$. Selecting the
test function $w = u-u_h$ and using~\eqref{eq_res_def} and~\eqref{eq_res_orth},
we see that
\begin{eqnarray*}
k\|u-u_h\|_{0,\Omega}^2
=
k\langle \RR(u_h),\xi \rangle
=
k\langle \RR(u_h),\xi - \xi_h \rangle
\leq
k\|\RR(u_h)\|_{-1,\Omega}|\xi - \xi_h|_{1,\Omega}
\end{eqnarray*}
for all $\xi_h \in V_h$. Hence, using~\eqref{eq_Cba_bis} and its consequence~\eqref{eq_best_approximation}
and recalling the definition of $\xi$, we have
\begin{equation*}
k\|u-u_h\|_{0,\Omega}^2 \leq
\|\RR(u_h)\|_{-1,\Omega} \left (
k \inf_{\xi_h \in V_h} |\xi - \xi_h|_{1,\Omega}
\right )
\leq
\Cba \|\RR(u_h)\|_{-1,\Omega} \|u-u_h\|_{0,\Omega},
\end{equation*}
and~\eqref{eq_global_upper_bound_l2} follows. \qed
\end{proof}

We now record a useful quadratic inequality.
\begin{lemm}[Quadratic inequality]
\label{lemma_polynomial}
Assume that $x \geq 0$ satisfies
\begin{equation*}
ax^2 \leq c + bx,
\end{equation*}
for some constants $a,b,c > 0$. Then we have
\begin{equation*}
ax \leq \frac{b}{2} +  \sqrt{\frac{b^2}{4} + ac}.
\end{equation*}
\end{lemm}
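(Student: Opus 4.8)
The statement is an elementary consequence of completing the square, so the plan is simply to reduce the hypothesis to a statement about a single non-negative quantity and then extract a square root. The only mild subtlety is making sure the sign conditions (here $c>0$ and $x\ge 0$) are used so that the correct root of the associated quadratic is selected.

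First I would multiply the assumed inequality $ax^2 \le c + bx$ by $a>0$ and set $y \eq ax \ge 0$, which turns the hypothesis into
\begin{equation*}
y^2 \le ac + by,
\qquad\text{equivalently}\qquad
\left(y - \tfrac{b}{2}\right)^2 \le ac + \tfrac{b^2}{4}.
\end{equation*}
Since the right-hand side is nonnegative, I can take square roots to obtain $\bigl|y - \tfrac{b}{2}\bigr| \le \sqrt{ac + b^2/4}$, and in particular $y - \tfrac{b}{2} \le \sqrt{ac + b^2/4}$. Rearranging gives $y \le \tfrac{b}{2} + \sqrt{b^2/4 + ac}$, which is exactly $ax \le \tfrac{b}{2} + \sqrt{b^2/4 + ac}$.

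Alternatively, and equivalently, one views $ax^2 - bx - c \le 0$ as a quadratic in $x$ with positive leading coefficient; its discriminant $b^2 + 4ac$ is positive and its roots are $x_\pm = \tfrac{b \pm \sqrt{b^2+4ac}}{2a}$, with $x_- < 0$ because $c>0$. Hence any $x \ge 0$ satisfying the inequality must lie in $[x_-, x_+]$, so $x \le x_+$, and multiplying by $a$ yields the claim. I do not anticipate any genuine obstacle here; the proof is a one-line completion of the square, and the only point worth stating explicitly is that the hypotheses $c>0$ and $x\ge 0$ are what force the relevant inequality rather than its reverse.
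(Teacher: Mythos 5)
Your proof is correct: completing the square in $y=ax$ immediately yields $y-\tfrac{b}{2}\le\sqrt{ac+\tfrac{b^2}{4}}$, which is the claim. The paper states this lemma without proof, treating it as a standard fact, and your argument is exactly the expected one; the only (harmless) overstatement is that the hypothesis $x\ge 0$ is not actually needed for the upper bound — only $a,c>0$ is used, to ensure the radicand is nonnegative.
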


With the help of Lemmas~\ref{lemma_upper_bound_l2} and~\ref{lemma_polynomial}, we obtain an upper bound on the $H^1(\Omega)$ semi-norm.

\begin{lemm}[$H^1(\Omega)$ semi-norm upper bound by the dual norm of the residual]
\label{lemma_upper_bound_h1}
We have
\begin{equation}
\label{eq_global_upper_bound_h1}
|u-u_h|_{1,\Omega} \leq \left (
\frac{1}{2} + \sqrt{\frac{1}{4} + \big(\Cba\big)^2}
\right ) \|\RR(u_h)\|_{-1,\Omega}.
\end{equation}
\end{lemm}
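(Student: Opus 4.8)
The plan is to bound $|u-u_h|_{1,\Omega}$ by testing the residual against $v = u - u_h$ itself and extracting the $H^1$-seminorm contribution from the sesquilinear form $b$. First I would write, using the definition~\eqref{eq_res_def} of the residual with $w = u-u_h$, the identity $\langle \RR(u_h), u-u_h\rangle = b(u-u_h,u-u_h)$, and then take real parts. From the definition~\eqref{eq_b} of $b$, we have $\Re b(u-u_h,u-u_h) = |u-u_h|_{1,\Omega}^2 - k^2\|u-u_h\|_{0,\Omega}^2$, since the $-ik(\cdot,\cdot)_\GA$ term is purely imaginary on the diagonal. Hence
\begin{equation*}
|u-u_h|_{1,\Omega}^2 = k^2\|u-u_h\|_{0,\Omega}^2 + \Re\langle \RR(u_h), u-u_h\rangle
\leq k^2\|u-u_h\|_{0,\Omega}^2 + \|\RR(u_h)\|_{-1,\Omega}\,|u-u_h|_{1,\Omega}.
\end{equation*}

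Next I would invoke Lemma~\ref{lemma_upper_bound_l2} to replace $k^2\|u-u_h\|_{0,\Omega}^2$ by $\big(\Cba\big)^2\|\RR(u_h)\|_{-1,\Omega}^2$, giving a quadratic inequality in $x := |u-u_h|_{1,\Omega}$ of the form $x^2 \leq c + bx$ with $a=1$, $b = \|\RR(u_h)\|_{-1,\Omega}$, and $c = \big(\Cba\big)^2\|\RR(u_h)\|_{-1,\Omega}^2$. Applying Lemma~\ref{lemma_polynomial} then yields
\begin{equation*}
x \leq \frac{b}{2} + \sqrt{\frac{b^2}{4} + c} = \left(\frac{1}{2} + \sqrt{\frac{1}{4} + \big(\Cba\big)^2}\right)\|\RR(u_h)\|_{-1,\Omega},
\end{equation*}
which is exactly~\eqref{eq_global_upper_bound_h1}.

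There is no serious obstacle here; the argument is a clean Aubin--Nitsche-style bootstrap. The only point requiring minor care is the sign bookkeeping in taking real parts: one must note that $\Re\big(-ik\|u-u_h\|_{0,\GA}^2\big) = 0$ so that the $\GA$-boundary term drops out of the diagonal, leaving only the "G\aa{}rding-type" identity $\Re b(v,v) = |v|_{1,\Omega}^2 - k^2\|v\|_{0,\Omega}^2$. One should also check that $b(u-u_h,u-u_h)$ is indeed what the residual evaluates to against $u-u_h \in H^1_\GD(\Omega)$ — this is immediate from~\eqref{eq_res_def} — and that the estimate $|\Re\langle \RR(u_h),v\rangle| \le |\langle \RR(u_h),v\rangle| \le \|\RR(u_h)\|_{-1,\Omega}|v|_{1,\Omega}$ is just the definition~\eqref{eq_res_norm_def} of the dual norm. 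Everything else is the elementary quadratic estimate already isolated as Lemma~\ref{lemma_polynomial}.
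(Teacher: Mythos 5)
Your proof is correct and follows exactly the paper's argument: the same G\aa{}rding-type identity $|u-u_h|_{1,\Omega}^2 = \Re b(u-u_h,u-u_h) + k^2\|u-u_h\|_{0,\Omega}^2$, the same appeal to Lemma~\ref{lemma_upper_bound_l2} for the $L^2$ term, and the same application of Lemma~\ref{lemma_polynomial} with $a=1$, $b=\|\RR(u_h)\|_{-1,\Omega}$, $c=\big(\Cba\big)^2\|\RR(u_h)\|_{-1,\Omega}^2$. Nothing to add.
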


\begin{proof}
Using the definition~\eqref{eq_b} of $b$, \eqref{eq_res_def}, and Lemma~\ref{lemma_upper_bound_l2},
we have
\begin{align*}
|u-u_h|_{1,\Omega}^2
&=
\Re b(u-u_h,u-u_h) + k^2\|u-u_h\|_{0,\Omega}^2
\\
&\leq
\|\RR(u_h)\|_{-1,\Omega}|u-u_h|_{1,\Omega} +
\big(\Cba\big)^2\|\RR(u_h)\|_{-1,\Omega}^2,
\end{align*}
and~\eqref{eq_global_upper_bound_h1} follows from Lemma~\ref{lemma_polynomial}
with $a=1$, $b=\|\RR(u_h)\|_{-1,\Omega}$, and $c=\big(\Cba\big)^2\|\RR(u_h)\|_{-1,\Omega}^2$. \qed
\end{proof}


We are now ready to establish global upper bounds in the energy norm.
Our first estimate is explicit in terms of $\Cba$.

\begin{prop}[First upper bound by the dual norm of the residual]
\label{proposition_upper_bound_full_norm_coarse}
We have
\begin{equation*}
\norm{u-u_h}_{1,k,\Omega} \leq (\sqrt{2}+\tilde\theta_1(\Cba)) \|\RR(u_h)\|_{-1,\Omega},
\end{equation*}
with $\tilde\theta_1$ defined in~\eqref{eq_Cupperc}.
\end{prop}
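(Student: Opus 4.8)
The plan is to combine the three norm components appearing in the energy norm $\norm{u-u_h}_{1,k,\Omega}^2 = k^2\|u-u_h\|_{0,\Omega}^2 + k\|u-u_h\|_{0,\GA}^2 + |u-u_h|_{1,\Omega}^2$ and bound each one by $\|\RR(u_h)\|_{-1,\Omega}$. The first two terms are essentially already handled: Lemma~\ref{lemma_upper_bound_l2} gives $k\|u-u_h\|_{0,\Omega} \leq \Cba \|\RR(u_h)\|_{-1,\Omega}$, and Lemma~\ref{lemma_upper_bound_h1} gives the bound on $|u-u_h|_{1,\Omega}$ with the prefactor $\frac12 + \sqrt{\frac14 + \Cba^2}$. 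The missing ingredient is a bound on the boundary term $k\|u-u_h\|_{0,\GA}^2$.

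**First I would** extract the boundary term from the imaginary part of the sesquilinear form: from the definition~\eqref{eq_b}, $\Im b(\phi,v)$ contains precisely the $-k(\phi,v)_\GA$ contribution, so testing the residual identity~\eqref{eq_res_def} with $v = u-u_h$ yields $k\|u-u_h\|_{0,\GA}^2 = -\Im b(u-u_h,u-u_h) = -\Im\langle\RR(u_h),u-u_h\rangle \leq \|\RR(u_h)\|_{-1,\Omega}\,|u-u_h|_{1,\Omega}$. Then I would insert the bound on $|u-u_h|_{1,\Omega}$ from Lemma~\ref{lemma_upper_bound_h1} to get $k\|u-u_h\|_{0,\GA}^2 \leq \big(\frac12+\sqrt{\frac14+\Cba^2}\big)\|\RR(u_h)\|_{-1,\Omega}^2$. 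Combining the three pieces:
\[
\norm{u-u_h}_{1,k,\Omega}^2 \leq \left[\Cba^2 + \left(\tfrac12+\sqrt{\tfrac14+\Cba^2}\right) + \left(\tfrac12+\sqrt{\tfrac14+\Cba^2}\right)^2\right]\|\RR(u_h)\|_{-1,\Omega}^2,
\]
and taking square roots, the prefactor is exactly $\sqrt{(\frac12+\sqrt{\frac14+\Cba^2}) + (\frac12+\sqrt{\frac14+\Cba^2})^2 + \Cba^2}$, which by the definition~\eqref{eq_Cupperc} equals $\sqrt{2} + \tilde\theta_1(\Cba)$. That closes the proof.

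**The main obstacle** — though a mild one — is simply being careful with the sign conventions in the real and imaginary parts of $b$: one must check that $\Re b(u-u_h,u-u_h) = |u-u_h|_{1,\Omega}^2 - k^2\|u-u_h\|_{0,\Omega}^2$ (used already in Lemma~\ref{lemma_upper_bound_h1}) and that $\Im b(u-u_h,u-u_h) = -k\|u-u_h\|_{0,\GA}^2$, both of which follow directly from~\eqref{eq_b} since $(u-u_h,u-u_h)$, $(u-u_h,u-u_h)_\GA$, and $|u-u_h|_{1,\Omega}^2$ are all real and nonnegative. One also needs $|\Im\langle\RR(u_h),v\rangle| \leq |\langle\RR(u_h),v\rangle| \leq \|\RR(u_h)\|_{-1,\Omega}|v|_{1,\Omega}$, which is immediate from the definition~\eqref{eq_res_norm_def} of the residual norm. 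No deeper difficulty is expected; the estimate is a bookkeeping assembly of the two lemmas plus the elementary boundary-term identity, and the algebraic identification of the resulting coefficient with $\sqrt2+\tilde\theta_1(\Cba)$ is just reading off~\eqref{eq_Cupperc}.
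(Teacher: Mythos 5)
Your proposal is correct and follows essentially the same route as the paper: the boundary term is extracted via $k\|u-u_h\|_{0,\GA}^2=-\Im b(u-u_h,u-u_h)\leq\|\RR(u_h)\|_{-1,\Omega}|u-u_h|_{1,\Omega}$, the $L^2(\Omega)$ and $H^1$-seminorm pieces come from Lemmas~\ref{lemma_upper_bound_l2} and~\ref{lemma_upper_bound_h1}, and the three contributions are summed and identified with $\sqrt{2}+\tilde\theta_1(\Cba)$ exactly as in the paper's proof.
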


\begin{proof}
Employing the definitions~\eqref{eq_b} and~\eqref{eq_res_def}, we have
\begin{equation*}
k\|u-u_h\|_{0,\GA}^2 = -\Im b(u-u_h,u-u_h) \leq \|\RR(u_h)\|_{-1,\Omega}|u-u_h|_{1,\Omega},
\end{equation*}
and using Lemma~\ref{lemma_upper_bound_h1}, it follows that
\begin{equation*}
k\|u-u_h\|_{0,\GA}^2 \leq \left (\frac{1}{2} + \sqrt{\frac{1}{4} + (\Cba)^2}\right )
\|\RR(u_h)\|_{-1,\Omega}^2.
\end{equation*}
Hence, from~\eqref{eq_en_norm} and~\eqref{eq_global_upper_bound_l2}--\eqref{eq_global_upper_bound_h1},
we infer that
\begin{align*}
\norm{u-u_h}_{1,k,\Omega}^2
\leq
\left ((\Cba)^2 + \left (\frac{1}{2} + \sqrt{\frac{1}{4} + (\Cba)^2}\right ) +
 \left (\frac{1}{2} + \sqrt{\frac{1}{4} +
(\Cba)^{2}}\right )^2 \right )\|\RR(u_h)\|_{-1,\Omega}^2,
\end{align*}
and the claim follows from the definition of $\tilde\theta_1$. \qed
\end{proof}

Proposition~\ref{proposition_upper_bound_full_norm_coarse} is not completely
satisfactory, since the asymptotic value for vanishing $\Cba$ of the prefactor
is $\sqrt{2}$.
We now provide a sharper analysis that shows that the asymptotic constant
can be brought to the optimal value of $1$. Recall the definition~\eqref{eq_tCba} of $\tCba$.

\begin{lemm}[$L^2(\GA)$-norm upper bound by the dual norm of the residual]
We have
\begin{equation}
\label{eq_global_upper_bound_boundary}
k^{\frac12} \|u-u_h\|_{0,\GA} \leq \tCba \|\RR(u_h)\|_{-1,\Omega}.
\end{equation}
\end{lemm}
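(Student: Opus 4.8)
The plan is to mimic the Aubin--Nitsche duality argument used in Lemma~\ref{lemma_upper_bound_l2}, but now replacing the volume-data adjoint problem by the boundary-data adjoint problem~\eqref{eq_helmholtz_adjoint_bc} that defines $\tCba$ in~\eqref{eq_tCba}. First I would introduce the adjoint solution $\zeta \in H^1_\GD(\Omega)$ associated with the boundary datum $\psi := u-u_h|_\GA$, i.e.\ the unique element satisfying $b(w,\zeta) = (w,u-u_h)_\GA$ for all $w \in H^1_\GD(\Omega)$. (Note that $u-u_h|_\GA$ indeed lies in $L^2(\GA)$ by the trace theorem, so this is well defined.) Then I would test this relation with $w = u-u_h$, giving
\begin{equation*}
\|u-u_h\|_{0,\GA}^2 = b(u-u_h,\zeta) = \langle \RR(u_h),\zeta\rangle,
\end{equation*}
where the last equality is the definition~\eqref{eq_res_def} of the residual.

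Next I would exploit the Galerkin orthogonality~\eqref{eq_res_orth}: for any $\zeta_h \in V_h$ we have $\langle \RR(u_h),\zeta_h\rangle = 0$, hence
\begin{equation*}
\|u-u_h\|_{0,\GA}^2 = \langle \RR(u_h),\zeta - \zeta_h\rangle \leq \|\RR(u_h)\|_{-1,\Omega}\,|\zeta - \zeta_h|_{1,\Omega}
\end{equation*}
by the definition~\eqref{eq_res_norm_def} of the dual norm. Taking the infimum over $\zeta_h \in V_h$ and invoking the definition~\eqref{eq_tCba} of $\tCba$ — applied to $\psi = u-u_h|_\GA$, for which $\widetilde u_\psi^\star = \zeta$ — yields
\begin{equation*}
k^{\frac12}\inf_{\zeta_h \in V_h}|\zeta - \zeta_h|_{1,\Omega} \leq \tCba\,\|u-u_h\|_{0,\GA}.
\end{equation*}
Combining the two displays gives $k^{\frac12}\|u-u_h\|_{0,\GA}^2 \leq \tCba\,\|\RR(u_h)\|_{-1,\Omega}\,\|u-u_h\|_{0,\GA}$, and dividing through by $\|u-u_h\|_{0,\GA}$ (the estimate being trivial when this vanishes) produces exactly~\eqref{eq_global_upper_bound_boundary}.

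The argument is essentially a direct transcription of the proof of Lemma~\ref{lemma_upper_bound_l2}, so I do not anticipate a serious obstacle; the only point requiring a moment's care is the identification of $\zeta$ as the adjoint solution $\widetilde u_\psi^\star$ appearing in~\eqref{eq_helmholtz_adjoint_bc} and hence the legitimacy of applying the bound defining $\tCba$ with that particular (solution-dependent) choice of boundary datum $\psi$. One should also make sure the trace $u-u_h|_\GA$ is taken in the sense of $H^1_\GD(\Omega) \hookrightarrow L^2(\GA)$, which is standard. Everything else — the duality pairing, Galerkin orthogonality, and Cauchy--Schwarz in the dual-norm form — is routine.
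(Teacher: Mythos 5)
Your proposal is correct and is essentially identical to the paper's own proof: the paper likewise introduces the adjoint solution $\chi$ with boundary datum $u-u_h$ on $\GA$, tests with $w=u-u_h$, invokes Galerkin orthogonality~\eqref{eq_res_orth} and the dual norm~\eqref{eq_res_norm_def}, and concludes by taking the infimum over $V_h$ and applying the definition~\eqref{eq_tCba} of $\tCba$. The only difference is notational ($\zeta$ versus $\chi$), and your cancellation of the factor $\|u-u_h\|_{0,\GA}$ matches the paper's argument.
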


\begin{proof}
We use again a duality argument.
We define $\chi$ as the unique element of $H^1_\GD(\Omega)$ such that
$b(w,\chi) = (w,u-u_h)_\GA$ for all $w \in H^1_\GD(\Omega)$.
Selecting the test function $w = u-u_h$, employing the definition of the residual~\eqref{eq_res_def},
and taking advantage of Galerkin's orthogonality~\eqref{eq_res_orth}, we have
\begin{equation*}
k \|u-u_h\|_{0,\GA}^2
=
k b(u-u_h,\chi)
=
k \langle \RR(u_h),\chi - \chi_h \rangle
\leq
k \|\RR(u_h)\|_{-1,\Omega} |\chi - \chi_h|_{1,\Omega}
\end{equation*}
for all $\chi_h \in V_h$. Then, recalling the above definition of $\chi$ and the
definition \eqref{eq_tCba} of $\tCba$, we obtain~\eqref{eq_global_upper_bound_boundary}
by taking the infinimum over all $\chi_h \in V_h$. \qed
\end{proof}

Our second estimate is explicit in terms of the two constants $\Cba$ and $\tCba$:

\begin{prop}[Second upper bound by the dual norm of the residual]
\label{proposition_global_upper_bound_fine}
We have
\[
\norm{u-u_h}_{1,k,\Omega} \leq (1+\tilde\theta_2(\Cba,\tCba)) \|\RR(u_h)\|_{-1,\Omega},
\]
where $\tilde\theta_2$ is defined in~\eqref{eq_Cupperf}.
\end{prop}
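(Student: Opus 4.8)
The plan is to feed the three already-established component estimates on the pieces of the energy norm directly into its definition~\eqref{eq_en_norm}. Writing
\[
\norm{u-u_h}_{1,k,\Omega}^2 = k^2\|u-u_h\|_{0,\Omega}^2 + k\|u-u_h\|_{0,\GA}^2 + |u-u_h|_{1,\Omega}^2,
\]
I would bound the volume $L^2$ term with Lemma~\ref{lemma_upper_bound_l2}, the $H^1$-seminorm term with Lemma~\ref{lemma_upper_bound_h1}, and --- this is the only place where the argument genuinely differs from the proof of Proposition~\ref{proposition_upper_bound_full_norm_coarse} --- the boundary $L^2$ term with the sharper estimate~\eqref{eq_global_upper_bound_boundary}, i.e.\ $k\|u-u_h\|_{0,\GA}^2 \leq (\tCba)^2\|\RR(u_h)\|_{-1,\Omega}^2$, rather than with the cruder bound $k\|u-u_h\|_{0,\GA}^2 = -\Im b(u-u_h,u-u_h) \leq \|\RR(u_h)\|_{-1,\Omega}|u-u_h|_{1,\Omega}$ followed by Lemma~\ref{lemma_upper_bound_h1}.

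Adding the three inequalities gives
\[
\norm{u-u_h}_{1,k,\Omega}^2 \leq \left( \left(\tfrac12 + \sqrt{\tfrac14 + (\Cba)^2}\right)^2 + (\Cba)^2 + (\tCba)^2 \right)\|\RR(u_h)\|_{-1,\Omega}^2,
\]
and taking the square root, the prefactor is precisely $1+\tilde\theta_2(\Cba,\tCba)$ by the definition~\eqref{eq_Cupperf}. So the proof reduces to recalling the three lemmas and performing this addition.

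Because all three component bounds are already in hand, there is no real obstacle left at this stage; the substantive work was done upstream in deriving~\eqref{eq_global_upper_bound_boundary}, which required introducing the boundary-data adjoint problem~\eqref{eq_helmholtz_adjoint_bc}, the approximability factor $\tCba$, and a separate Aubin--Nitsche-type duality argument paralleling that of Lemma~\ref{lemma_upper_bound_l2}. The one point worth remarking is that the resulting bound and that of Proposition~\ref{proposition_upper_bound_full_norm_coarse} are complementary: replacing the $H^1$-seminorm-based control of the boundary term by $(\tCba)^2$ is an improvement exactly when $\tCba$ is small (in particular in the asymptotic regime, cf.~\eqref{eq_upper_bound_tcba}), which is why $\Cupper$ in~\eqref{eq:def_Cupper} is defined as the minimum of the two prefactors.
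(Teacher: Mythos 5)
Your proposal is correct and is exactly the paper's argument: the paper's proof of Proposition~\ref{proposition_global_upper_bound_fine} consists precisely of combining \eqref{eq_global_upper_bound_l2}, \eqref{eq_global_upper_bound_h1}, and \eqref{eq_global_upper_bound_boundary} and summing the squares, which matches the definition \eqref{eq_Cupperf} of $\tilde\theta_2$. Your closing remark about why the two prefactors are complementary (hence the minimum in \eqref{eq:def_Cupper}) is also consistent with Remark~\ref{rem_ass_reg}.
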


\begin{proof}
The proof combines~\eqref{eq_global_upper_bound_l2}, \eqref{eq_global_upper_bound_h1},
and~\eqref{eq_global_upper_bound_boundary}. \qed
\end{proof}

\subsection{Local lower bounds}

Define the local Sobolev space $H^1_\star(\oa)$ as
\begin{equation*}
H^1_\star(\oa) \eq
\left \{
\begin{array}{ll}
\{ v \in H^1(\oa) \ | \ \int_\oa v = 0 \}
&
\text{ when } \aaa \not \in \overline{\GD},
\\
\left \{
v \in H^1(\oa)\ |\  v = 0 \text{ on the part of } \GD \text{ where }
\psi_\aaa \neq 0
\right \}
&
\text{ when } \aaa \in \overline{\GD}.
\end{array}
\right .
\end{equation*}
We will now employ a localized dual norm of the residual from~\eqref{eq_res_def}
that is defined for each $\aaa \in \VV_h$ by
\begin{equation} \label{eq_res_loc}
\|\RR(u_h)\|_{-1,\aaa} \eq \sup_{v \in H^1_\star(\oa) \setminus \{0\}}
\frac{|\langle \RR(u_h),\psi_\aaa v\rangle|}{|v|_{1,\oa}},
\end{equation}
recalling that $\psi_\aaa$ is the hat function associated with the vertex $\aaa \in \VV_h$.

We record that there exists a (Poincar\'e or Poincar\'e--Friedrichs) constant $\CPa$
that depends only on the local shape-regularity parameter
$\kappa_{\TT_\aaa} \eq \min_{K \in \TT_\aaa} \kappa_K$ such that
\begin{equation}
\label{eq_poincare_oa}
\|v\|_{0,\oa} \leq \CPa h_\aaa |v|_{1,\oa} \qquad \forall v \in H^1_\star(\oa),
\end{equation}
where $h_\aaa \eq \sup_{\xx,\yy \in \oa} |\xx - \yy|$ is the diameter of the patch subdomain $\oa$.
When $\omega_\aaa$ is convex and $\aaa \notin \overline{\GD}$,
we have $\CPa = 1/\pi$, and we refer the reader to~\cite{Vees_Verf_Poin_stars_12} and the references therein
for a discussion on the value of the constant $\CPa$ in the case of
non-convex patches or patches around a Dirichlet boundary vertex.
As observed in~\cite{braess_pillwein_schoberl_2009a,ern_vohralik_2015a}, Leibniz's rule
in conjunction with~\eqref{eq_poincare_oa} shows that
\begin{equation}
\label{eq_stab_h1_hat}
|\psi_\aaa v|_{1,\oa} \leq \CcPFa |v|_{1,\oa} \qquad \forall v \in H^1_\star(\oa),
\end{equation}
where the constant
$\CcPFa \eq 1 + \CPa |\psi_\aaa|_{1,\infty,\oa} h_\aaa$
only depends on $\kappa_{\TT_\aaa}$. We will also employ the trace inequality
\begin{equation}
\label{eq_trace_oa}
\|v\|_{0,\partial \oa \cap \GA} \leq \Ctra
h_\aaa^{\frac12} |v|_{1,\oa} \qquad \forall v \in H^1_\star(\oa),
\end{equation}
where the constant $\Ctra$ again only depends on $\kappa_{\TT_\aaa}$.
%
Finally, we introduce for each vertex $\aaa \in \VV_h$ the local norm
\begin{equation*}
\norm{v}_{1,k,\oa}^2
\eq
k^2 \|v\|_{0,\oa}^2 +
k \|v\|_{0,\partial \oa \cap \GA}^2 +
|v|_{1,\oa}^2 \quad \forall v \in H^1(\oa).
\end{equation*}


For all vertices $\aaa \in \VV_h$, if $v \in H^1_\GD(\Omega)$ with $\supp v \subset \oa$,
there exists a discrete function $Q_{hp}^\aaa (v) \in V_h$ with
$\supp (Q_{hp}^\aaa (v)) \subset \oa$ such that
\begin{equation}
\label{eq_quasi_interpolation}
\norm{v - Q_{hp}^\aaa (v)}_{1,k,\oa}
\leq
\Cqia \left (1 + \frac{kh_\aaa}{p} + \left (\frac{kh_\aaa}{p}\right )^2\right )^{\frac12}
|v|_{1,\oa},
\end{equation}
where the constant $\Cqia$ only depends on $\kappa_{\TT_\aaa}$.
We can use a quasi-interpolation operator $Q_{hp}^\aaa$ to achieve
\eqref{eq_quasi_interpolation}. The construction of such an operator
is presented in Theorem 3.3 of~\cite{melenk_2005a} in two space dimensions.
For three space dimensions, the corresponding operator is constructed
in~\cite{hiptmair_pechstein_2017a}.

\begin{lemm}[Local lower bounds of the error by the dual norm of the residual]
\label{lem_lower_bounds}
For all vertices $\aaa \in \VV_h$, we have
\[
\|\RR(u_h)\|_{-1,\aaa}
\leq
\Clowa \norm{u-u_h}_{1,k,\oa},
\]
where $\Clowa \eq \min(\Clowerhat,\Clowerqi)$ and
\begin{align*}
\Clowerhat
&\eq
\CcPFa \left (
1
+
\frac{\Ctra^2}{\CcPFa^2} kh_\aaa
+
\frac{\CPa^2}{\CcPFa^2} (kh_\aaa)^2
\right )^{\frac12},
\\
\Clowerqi
&\eq
\CcPFa \Cqia \left (
1
+
\frac{kh_\aaa}{p}
+
\left (
\frac{kh_\aaa}{p}
\right )^2
\right )^{\frac12}.
\end{align*}
\end{lemm}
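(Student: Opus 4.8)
\emph{Proof proposal.} The plan is to start from the definition \eqref{eq_res_loc} of $\|\RR(u_h)\|_{-1,\aaa}$ and to bound $|\langle\RR(u_h),\psi_\aaa v\rangle|$ for an arbitrary $v\in H^1_\star(\oa)\setminus\{0\}$, then divide by $|v|_{1,\oa}$ and take the supremum. First I would check that the function $w\eq\psi_\aaa v$, extended by zero outside $\oa$, belongs to $H^1_\GD(\Omega)$ with $\supp w\subset\oa$: when $\aaa\in\overline{\GD}$ this holds because $v=0$ on the part of $\GD$ where $\psi_\aaa\neq0$, and when $\aaa\notin\overline{\GD}$ because the hat function $\psi_\aaa$ vanishes on every mesh face not containing $\aaa$, hence on $\GD\cap\partial\oa$. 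Consequently $\langle\RR(u_h),w\rangle=b(u-u_h,w)$ by \eqref{eq_res_def}.

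The next step is a patchwise continuity estimate for $b$. Since $\supp w\subset\oa$, only the restrictions of the three terms of $b$ to $\oa$ and to $\partial\oa\cap\GA$ enter, so a termwise Cauchy--Schwarz inequality followed by the Cauchy--Schwarz inequality in $\mathbb R^3$ applied to the triples $(k\|u-u_h\|_{0,\oa},k^{\frac12}\|u-u_h\|_{0,\partial\oa\cap\GA},|u-u_h|_{1,\oa})$ and $(k\|w\|_{0,\oa},k^{\frac12}\|w\|_{0,\partial\oa\cap\GA},|w|_{1,\oa})$ yields $|b(u-u_h,w)|\le\norm{u-u_h}_{1,k,\oa}\norm{w}_{1,k,\oa}$, the local analogue of \eqref{eq_cont_en}. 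It then remains to estimate $\norm{\psi_\aaa v}_{1,k,\oa}$, and I would do this in two ways, one per constant. For $\Clowerhat$: using $0\le\psi_\aaa\le1$ with the Poincar\'e--Friedrichs inequality \eqref{eq_poincare_oa} gives $k\|\psi_\aaa v\|_{0,\oa}\le\CPa(kh_\aaa)|v|_{1,\oa}$, using $0\le\psi_\aaa\le1$ with the trace inequality \eqref{eq_trace_oa} gives $k^{\frac12}\|\psi_\aaa v\|_{0,\partial\oa\cap\GA}\le\Ctra(kh_\aaa)^{\frac12}|v|_{1,\oa}$, and the Leibniz-rule estimate \eqref{eq_stab_h1_hat} gives $|\psi_\aaa v|_{1,\oa}\le\CcPFa|v|_{1,\oa}$; summing the squares, factoring out $\CcPFa^2$, and taking a square root produces $\norm{\psi_\aaa v}_{1,k,\oa}\le\Clowerhat|v|_{1,\oa}$, hence the bound with $\Clowerhat$.

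For the constant $\Clowerqi$, I would instead exploit Galerkin's orthogonality \eqref{eq_res_orth}. Since $w=\psi_\aaa v\in H^1_\GD(\Omega)$ with $\supp w\subset\oa$, the quasi-interpolant $Q_{hp}^\aaa(w)\in V_h$ furnished before the lemma has $\supp Q_{hp}^\aaa(w)\subset\oa$ and obeys \eqref{eq_quasi_interpolation}; by \eqref{eq_res_orth}, $\langle\RR(u_h),w\rangle=\langle\RR(u_h),w-Q_{hp}^\aaa(w)\rangle=b(u-u_h,w-Q_{hp}^\aaa(w))$. The local continuity estimate then gives $|\langle\RR(u_h),\psi_\aaa v\rangle|\le\norm{u-u_h}_{1,k,\oa}\norm{w-Q_{hp}^\aaa(w)}_{1,k,\oa}$, and bounding the last factor by \eqref{eq_quasi_interpolation} (with $w$ in the role of the generic argument) followed by $|w|_{1,\oa}=|\psi_\aaa v|_{1,\oa}\le\CcPFa|v|_{1,\oa}$ from \eqref{eq_stab_h1_hat} yields the bound with $\Clowerqi$. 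Taking the minimum of the two constants gives $\Clowa=\min(\Clowerhat,\Clowerqi)$ and proves the lemma.

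The hard part will not be any single estimate—these are routine chains of Cauchy--Schwarz, Poincar\'e, trace, and Leibniz bounds already recorded in the excerpt—but rather the careful bookkeeping at the interface with the function spaces: verifying that $\psi_\aaa v\in H^1_\GD(\Omega)$ (needed for both the residual pairing and the quasi-interpolation to be applicable) in the two cases $\aaa\in\overline{\GD}$ and $\aaa\notin\overline{\GD}$, and confirming that $Q_{hp}^\aaa(w)$ genuinely lies in $V_h$ so that $\langle\RR(u_h),Q_{hp}^\aaa(w)\rangle=0$.
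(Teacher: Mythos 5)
Your proposal is correct and follows essentially the same route as the paper's proof: bound $|\langle\RR(u_h),\psi_\aaa v\rangle|=|b(u-u_h,\psi_\aaa v)|$ by the local energy-norm continuity of $b$, then estimate $\norm{\psi_\aaa v}_{1,k,\oa}$ either directly via \eqref{eq_poincare_oa}, \eqref{eq_trace_oa}, and \eqref{eq_stab_h1_hat} (giving $\Clowerhat$) or via Galerkin orthogonality and the quasi-interpolant $Q_{hp}^\aaa$ with \eqref{eq_quasi_interpolation} (giving $\Clowerqi$), and take the minimum. The extra care you devote to checking that $\psi_\aaa v$, extended by zero, lies in $H^1_\GD(\Omega)$ is left implicit in the paper but is a sound addition.
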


\begin{proof}
Let $v \in H^1_\star(\oa)$. From~\eqref{eq_cont_en} and~\eqref{eq_res_def}, we observe that
\begin{equation} \label{eq_dev}
|\langle \RR(u_h), \psi_\aaa v\rangle|
=
|b(u-u_h,\psi_\aaa v)|
\leq
\norm{u-u_h}_{1,k,\oa}\norm{\psi_\aaa v}_{1,k,\oa}.
\end{equation}
Then, using~\eqref{eq_poincare_oa}, \eqref{eq_stab_h1_hat}, and~\eqref{eq_trace_oa}, we have
\begin{align*}
\norm{\psi_\aaa v}_{1,k,\oa}^2
&=
k^2 \|\psi_\aaa v\|_{0,\oa}^2
+
k\|\psi_\aaa v\|_{0,\partial \oa \cap \GA}^2
+
|\psi_\aaa v|_{1,\oa}^2
\\
&\leq
k^2 \|v\|_{0,\oa}^2
+
k\|v\|_{0,\partial \oa \cap \GA}^2
+
|\psi_\aaa v|_{1,\oa}^2
\\
&\leq
\left (
\CPa^2 k^2 h_\aaa ^2
+
\Ctra^2 k h_\aaa
+
\CcPFa^2
\right )
|v|_{1,\oa}^2
\\
&=
\Clowerhat^2 |v|_{1,\oa}^2.
\end{align*}

On the other hand, since $Q_{hp}^\aaa (\psi_\aaa v) \in V_h$ with $\supp (Q_{hp}^\aaa (\psi_\aaa v)) \subset \oa$,
using Galerkin's orthogonality~\eqref{eq_res_orth} in~\eqref{eq_dev}
and~\eqref{eq_quasi_interpolation} with~\eqref{eq_stab_h1_hat}, we have
\begin{align*}
|\langle \RR(u_h),\psi_\aaa v \rangle |
&\leq
\norm{u-u_h}_{1,k,\oa} \norm{\psi_\aaa v - Q^\aaa_{hp}(\psi_\aaa v)}_{1,k,\oa}
\\
&\leq
\Cqia \left (
1 + \frac{kh_\aaa}{p} + \left (\frac{kh_\aaa}{p} \right )^2
\right )^{\frac12} \norm{u-u_h}_{1,k,\oa} |\psi_\aaa v|_{1,\oa}
\\
&\leq
\CcPFa \Cqia \left (
1 + \frac{kh_\aaa}{p} + \left (\frac{kh_\aaa}{p} \right )^2
\right )^{\frac12} \norm{u-u_h}_{1,k,\oa} |v|_{1,\oa}
\\
&=
\Clowerqi \norm{u-u_h}_{1,k,\oa}|v|_{1,\oa}.
\end{align*}
The expected result follows by combining the two bounds together
with the definition~\eqref{eq_res_loc} of the localized dual norm. \qed
\end{proof}

\section{Bounds on the dual norm of the residual by equilibrated fluxes}
\label{sec:equilibrated_estimator}

In the previous section, we derived upper and lower bounds for the
finite element error based on dual norms of the residual. These dual
norms are not directly computable, as they are defined using a supremum
over infinite-dimensional spaces. In this section, we use the technique of equilibrated
flux construction to achieve guaranteed computable upper and lower bounds
of these dual norms.

\subsection{Global upper bound}

Recall the definitions~\eqref{eq_osc} of the data oscillation,
\eqref{eq_definition_eta} of the error estimator $\eta$,
\eqref{eq_res_def} and~\eqref{eq_res_norm_def} of
the residual and its dual norm, and finally Definition~\ref{def_sigma_a}
of the equilibrated flux $\sig_h$.

\begin{prop}[Upper bound on the dual norm of the residual]
\label{proposition_global_upper_bound}
The following holds true:
\begin{equation}
\label{eq_global_upper_bound}
\|\RR(u_h)\|_{-1,\Omega}
\leq
\left (
\sum_{K \in \mathcal T_h}
\left (
\eta_K + \osc_K(f,g)
\right )^2
\right )^{\frac12}.
\end{equation}
\end{prop}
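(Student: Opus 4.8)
The plan is to use the equilibration properties of the reconstructed flux $\ssig_h$ to rewrite, for an arbitrary test function $v\in H^1_\GD(\Omega)$, the action $\langle\RR(u_h),v\rangle$ as the sum of an elementwise flux misfit tested against $\grad v$ and a data-oscillation term tested against $v$, and then to estimate each contribution by $|v|_{1,\Omega}$. The first and most delicate step is to pass from the patchwise problems of Definition~\ref{def_sigma_a} to global identities for $\ssig_h$. Summing the divergence constraints $\div\ssig^\aaa_h = d_\aaa$ over all $\aaa\in\VV_h$ and invoking the partition-of-unity identities $\sum_{\aaa\in\VV_h}\psi_\aaa\equiv 1$ and $\sum_{\aaa\in\VV_h}\grad\psi_\aaa\equiv 0$ yields $\div\ssig_h = \pi_h^p(f) + k^2 u_h$ in $\Omega$. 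Summing the normal-trace constraints $\ssig^\aaa_h{\cdot}\nn = b_\aaa$ on $\Gamma_\aaa$ and using $\sum_{\aaa}\psi_\aaa\equiv 1$ on $\GA$ (where $u_h$ is single-valued) gives $\ssig_h{\cdot}\nn = -\tpi_h^p(g) - ik u_h$ on $\GA$. Finally, $\ssig_h\in\HH(\ddiv,\Omega)$ because each local flux has vanishing normal trace on the interior faces of the patch boundary $\partial\oa$, so that the zero extensions glue together across patches; here the three roles of a vertex (interior, on $\GD$, on $\GA$) must be distinguished.

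With these identities in hand, I would insert $-(\grad u_h,\grad v) = -(\grad u_h + \ssig_h,\grad v) + (\ssig_h,\grad v)$ into the definition~\eqref{eq_res_def} of $\langle\RR(u_h),v\rangle$, integrate $(\ssig_h,\grad v)$ by parts (the $\GD$-part of $\partial\Omega$ drops out since $v|_\GD = 0$), and substitute $\div\ssig_h$ and $\ssig_h{\cdot}\nn|_\GA$. The volume terms $k^2(u_h,v)$ and the surface terms $ik(u_h,v)_\GA$ cancel exactly, leaving
\[
\langle\RR(u_h),v\rangle = (f - \pi_h^p(f),v) + (g - \tpi_h^p(g),v)_\GA - (\grad u_h + \ssig_h,\grad v).
\]
The flux term is bounded elementwise by the Cauchy--Schwarz inequality: $|(\grad u_h + \ssig_h,\grad v)|\le\sum_{K\in\TT_h}\|\ssig_h + \grad u_h\|_{0,K}|v|_{1,K} = \sum_{K\in\TT_h}\eta_K|v|_{1,K}$. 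For the data terms, the key observation is that $f - \pi_h^p(f)$ is $L^2(K)$-orthogonal to constants, and $g - \tpi_h^p(g)$ is $L^2(F)$-orthogonal to constants on each face $F\subset\GA$ (since $p\ge 0$); hence one may subtract the mean value $\bar v_K$ of $v$ on $K$ in both data terms on $K$ without changing them. Applying the Cauchy--Schwarz inequality and then~\eqref{eq_upper_bound_oscK} to the mean-free function $v - \bar v_K$ bounds the contribution of $K$ by $\osc_K(f,g)|v - \bar v_K|_{1,K} = \osc_K(f,g)|v|_{1,K}$. Summing over $K$ and applying a discrete Cauchy--Schwarz inequality gives $|\langle\RR(u_h),v\rangle|\le\bigl(\sum_{K\in\TT_h}(\eta_K + \osc_K(f,g))^2\bigr)^{1/2}|v|_{1,\Omega}$; dividing by $|v|_{1,\Omega}$ and taking the supremum over $v\in H^1_\GD(\Omega)\setminus\{0\}$ yields~\eqref{eq_global_upper_bound}.

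I expect the main obstacle to be precisely this first step: verifying the global equilibration identities for $\div\ssig_h$ in $\Omega$ and for $\ssig_h{\cdot}\nn$ on $\GA$, together with the $\HH(\ddiv,\Omega)$-conformity of the zero-extended patchwise sum, while carefully tracking the contributions of interior, $\GD$-, and $\GA$-vertices, so that exactly the non-computable $u_h$-dependent volume and surface terms in the residual cancel and only $\eta_K$ and $\osc_K(f,g)$ survive. Once this is settled, the remainder is a routine combination of the Cauchy--Schwarz inequality with the oscillation bound~\eqref{eq_upper_bound_oscK}.
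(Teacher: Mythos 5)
Your proposal is correct and follows essentially the same route as the paper: sum the patchwise divergence and normal-trace constraints via the partition of unity to obtain the global equilibration identities for $\ssig_h$, integrate by parts to reduce $\langle \RR(u_h),v\rangle$ to the flux misfit plus data-oscillation terms, and conclude by orthogonality, \eqref{eq_upper_bound_oscK}, and Cauchy--Schwarz. The only (harmless, arguably cleaner) difference is that you subtract the cell mean $\bar v_K$ in the oscillation step, whereas the paper subtracts $\pi_h^p(v)$; both exploit the same orthogonality of $f-\pi_h^p(f)$ and $g-\tpi_h^p(g)$ to discrete functions.
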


\begin{proof}
We first observe that since the hat functions form a partition of unity, we have
\begin{equation*}
\sum_{\aaa \in \VV_h} \psi_\aaa(\xx) = 1 \qquad \forall \xx \in \overline \Omega.
\end{equation*}
The summation over all vertices $\aaa \in \VV_h$ in~\eqref{eq_definition_flux_glob}
together with the divergence and normal trace constraints in~\eqref{eq_definition_flux} lead to
\begin{equation*}
\div \ssig_h = \pi_h^p (f) + k^2 u_h \text{ in } \Omega, \quad
\ssig_h {\cdot} \nn = -\left (\tpi_h^p(g) + ik u_h\right ) \text{ on } \GA.
\end{equation*}
Then, if $v \in H^1_\GD(\Omega)$, we have
\begin{align*}
b(u_h,v)
&=
-k^2(u_h,v) -ik(u_h,v)_\GA + (\grad u_h,\grad v)
\\
&=
(\pi_h^p (f),v) + (\tpi_h^p(g),v)_\GA -
(\pi_h^p (f) + k^2 u_h,v) -(\tpi_h^p(g) + iku_h,v)_\GA + (\grad u_h,\grad v)
\\
&=
(\pi_h^p (f),v) + (\tpi_h^p(g),v)_\GA -
(\div \ssig_h,v) + (\sig_h {\cdot} \nn,v)_{\GA} + (\grad u_h,\grad v)
\\
&=
(\pi_h^p (f),v) + (\tpi_h^p(g),v)_\GA + (\ssig_h + \grad u_h,\grad v).
\end{align*}
It follows that
\begin{align*}
\langle \RR(u_h),v\rangle
&=
(f-\pi_h^p (f),v) + (g - \tpi_h^p(g),v)_\GA + (\pi_h^p (f),v) + (\tpi_h^p(g),v)_\GA - b(u_h,v)
\\
&=
(f-\pi_h^p (f),v) + (g-\tpi_h^p(g),v)_\GA - (\ssig_h + \grad u_h,\grad v).
\end{align*}
Since the restriction of $\pi_h^p(v)$ to each mesh face $F \subset \GA$
belongs to $\PP_p(F)$, and $v - \pi_h^p(v)$ has zero mean-value on each
mesh cell $K$, we have
\begin{align*}
&
|(f-\pi_h^p (f),v)_K + (g-\tpi_h^p(g),v)_{\partial K \cap \partial \GA}| \\
& =
|(f-\pi_h^p (f),v-\pi_h^p(v))_K +
(g-\tpi_h^p(g),v-\pi_h^p(v))_{\partial K \cap \GA}|
\\
& \leq
\|f-\pi_h^p (f)\|_{0,K}\|v-\pi_h^p(v)\|_{0,K} + \|g -\tpi_h^p(g)\|_{0,\partial K \cap \GA}
\|v - \pi_h^p(v)\|_{0,\partial K \cap \GA}
\\
& \leq
\osc_K(f,g)|v|_{1,K},
\end{align*}
where we used~\eqref{eq_upper_bound_oscK}.
The Cauchy--Schwarz inequality now implies~\eqref{eq_global_upper_bound}. \qed
\end{proof}

At this point, the upper bound~\eqref{eq_est_up} of Theorem~\ref{theorem_upper_bounds} follows
from Propositions~\ref{proposition_upper_bound_full_norm_coarse},
\ref{proposition_global_upper_bound_fine}, and~\ref{proposition_global_upper_bound}
by setting $\Cupper \eq \min \left (\sqrt{2}+\tilde\theta_1(\Cba),
1+\tilde\theta_2(\Cba,\tCba)\right)$
and bounding~\eqref{eq_global_upper_bound} further by the triangle inequality.

\subsection{Local lower bound}

We first introduce a residual with projected source terms
\begin{equation} \label{eq_res_proj}
\langle \RR_h(u_h),v\rangle
\eq
(\pi_h^p (f),v) + (\tpi_h^p(g),v)_\GA - b(u_h,v)
\qquad \forall v \in H^1_\GD(\Omega),
\end{equation}
where the original right-hand sides $f$ and $g$ have been respectively replaced
by their elementwise and facewise $L^2$ projections. We employ the same notation
for the dual norms of $\RR_h(u_h)$ as for those of $\RR(u_h)$.

\begin{lemm}[Data oscillations] \label{lem_data_osc}
We have
\[
\|\RR_h(u_h)\|_{-1,\aaa}
\leq
\|\RR(u_h)\|_{-1,\aaa} + C(\kappa_{\TT_\aaa})\osc_{\TT_\aaa}(f,g) \qquad \forall \aaa \in \VV_h,
\]
where the constant $C(\kappa_{\TT_\aaa})$ only depends on the shape-regularity paremeter
of the elements in the patch $\TT_\aaa$.
\end{lemm}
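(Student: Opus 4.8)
The plan is to bound the localized dual norm of the difference $\RR(u_h)-\RR_h(u_h)$ and then invoke the triangle inequality on dual norms. First I would observe that, from the definitions~\eqref{eq_res_def} and~\eqref{eq_res_proj}, for any $v\in H^1_\GD(\Omega)$ one has
\[
\langle \RR(u_h)-\RR_h(u_h),v\rangle = (f-\pi_h^p(f),v) + (g-\tpi_h^p(g),v)_\GA,
\]
so that for a test function of the form $\psi_\aaa v$ with $v\in H^1_\star(\oa)$ we get
\[
\langle \RR(u_h)-\RR_h(u_h),\psi_\aaa v\rangle
= \sum_{K\in\TT_\aaa}\Big[(f-\pi_h^p(f),\psi_\aaa v)_K + (g-\tpi_h^p(g),\psi_\aaa v)_{\partial K\cap\GA}\Big].
\]
The key point is that $\pi_h^p$ and $\tpi_h^p$ are $L^2$-orthogonal projections onto $\PP_p(K)$ and $\PP_p(F)$, so one can subtract the elementwise/facewise $L^2$-projection of $\psi_\aaa v$ (of the appropriate polynomial degree) from $\psi_\aaa v$ inside each inner product without changing the value. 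After this subtraction, Cauchy--Schwarz on each element and face gives the product of $\|f-\pi_h^p(f)\|_{0,K}$ (resp.\ $\|g-\tpi_h^p(g)\|_{0,\partial K\cap\GA}$) with the $L^2$-norm of $\psi_\aaa v$ minus its projection.

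The second step is to control these $L^2$-norms of $\psi_\aaa v$ minus its projection by $|v|_{1,\oa}$. Here I would use a local Poincaré--Wirtinger inequality on each cell $K$ together with a trace inequality on the boundary faces (the analogues of the estimates already recorded in~\eqref{eq_poincare_oa} and~\eqref{eq_trace_oa}, or the cellwise ones behind~\eqref{eq_upper_bound_oscK}), combined with the product-rule bound $|\psi_\aaa v|_{1,K}\le C(\kappa_{\TT_\aaa})|v|_{1,\oa}$ which is uniform over $K\in\TT_\aaa$ because $\|\grad\psi_\aaa\|_{\infty}h_\aaa$ is controlled by shape-regularity (this is exactly~\eqref{eq_stab_h1_hat}). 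Summing the cellwise and facewise contributions with Cauchy--Schwarz in the index $K$, one obtains
\[
|\langle \RR(u_h)-\RR_h(u_h),\psi_\aaa v\rangle|
\le C(\kappa_{\TT_\aaa})\,\osc_{\TT_\aaa}(f,g)\,|v|_{1,\oa},
\]
since the factors $h_K/\pi$ and $C_{{\rm tr},K}(h_K/\pi)^{1/2}$ appearing from the Poincaré and trace inequalities are precisely the weights in the definition~\eqref{eq_osc} of $\osc_K(f,g)$. Dividing by $|v|_{1,\oa}$ and taking the supremum over $v\in H^1_\star(\oa)\setminus\{0\}$ gives $\|\RR(u_h)-\RR_h(u_h)\|_{-1,\aaa}\le C(\kappa_{\TT_\aaa})\osc_{\TT_\aaa}(f,g)$, and the stated bound follows from $\|\RR_h(u_h)\|_{-1,\aaa}\le\|\RR(u_h)\|_{-1,\aaa}+\|\RR(u_h)-\RR_h(u_h)\|_{-1,\aaa}$.

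The only genuine subtlety — the "hard part" — is the interplay between the hat-function weight $\psi_\aaa$ and the need for $v-\pi_h^p(v)$ to have zero mean on each cell (and its facewise analogue): one must be careful to project $\psi_\aaa v$, not $v$, and to check that multiplying by $\psi_\aaa$ does not spoil the order $h_K$ gain, which is why the uniform bound~\eqref{eq_stab_h1_hat} on $|\psi_\aaa\cdot|_{1,\oa}$ and the shape-regularity of the whole patch $\TT_\aaa$ (not just the single element $K$) enter the constant $C(\kappa_{\TT_\aaa})$. Everything else is a routine assembly of the Poincaré, trace, and Cauchy--Schwarz inequalities already available in the excerpt.
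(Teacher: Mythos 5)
Your proof is correct and reaches the stated bound, but it organizes the key estimate differently from the paper. After the identity
$\langle \RR_h(u_h),\psi_\aaa v\rangle = \langle \RR(u_h),\psi_\aaa v\rangle - (f-\pi_h^p(f),\psi_\aaa v) - (g-\tpi_h^p(g),\psi_\aaa v)_{\GA}$,
the paper simply uses $0\le\psi_\aaa\le1$ to drop the hat function, i.e.\ $\|\psi_\aaa v\|_{0,\oa}\le\|v\|_{0,\oa}$ and $\|\psi_\aaa v\|_{0,\partial\oa\cap\GA}\le\|v\|_{0,\partial\oa\cap\GA}$, and then exploits that $v\in H^1_\star(\oa)$ already carries the zero-mean (or zero-trace) property on the whole patch, so the patchwise Poincar\'e and trace inequalities~\eqref{eq_poincare_oa} and~\eqref{eq_trace_oa} apply directly to $v$; no orthogonality of $\pi_h^p$ or $\tpi_h^p$ is needed and~\eqref{eq_stab_h1_hat} is not invoked. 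Your route instead keeps $\psi_\aaa v$, subtracts its cellwise/facewise projections using the $L^2$-orthogonality of $\pi_h^p$ and $\tpi_h^p$, applies the cellwise Poincar\'e and trace inequalities behind~\eqref{eq_upper_bound_oscK}, and then controls $|\psi_\aaa v|_{1,\oa}$ via~\eqref{eq_stab_h1_hat}. Both arguments are valid and yield a constant depending only on $\kappa_{\TT_\aaa}$: yours matches the cellwise weights $h_K/\pi$ and $C_{{\rm tr},K}(h_K/\pi)^{1/2}$ in the definition~\eqref{eq_osc} of $\osc_K(f,g)$ more directly (the paper's patchwise constants scale with $h_\aaa$ and must be converted to the cellwise weights $h_K$ by shape regularity, which is precisely where its $C(\kappa_{\TT_\aaa})$ comes from), while the paper's is shorter because it sidesteps the projection-orthogonality step entirely. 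The point you flag as the subtlety --- that one must project $\psi_\aaa v$ rather than $v$ and pay a factor from~\eqref{eq_stab_h1_hat} --- is handled correctly in your plan.
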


\begin{proof}
Fix a vertex $\aaa \in \VV_h$. For all $v \in H^1_\star(\oa)$, we have
\begin{equation*}
\langle \RR_h(u_h), \psi_\aaa v \rangle
=
\langle \RR(u_h), \psi_\aaa v \rangle - (f - \pi_h^p (f),\psi_\aaa v) -
(g - \tpi_h^p(g),\psi_\aaa v)_{\GA}.
\end{equation*}
Consequently, we infer that
\begin{align*}
&|\langle \RR_h(u_h),\psi_\aaa v \rangle|
\\
&\leq
|\langle \RR(u_h),\psi_\aaa v \rangle| +
\|f - \pi_h^p (f)\|_{0,\oa}\|\psi_\aaa v\|_{0,\oa} +
\|g -\tpi_h^p(g)\|_{0,\partial \oa \cap \GA} \|\psi_\aaa v\|_{0,\partial \oa \cap \GA}
\\
&\leq
|\langle \RR(u_h),\psi_\aaa v \rangle| +
\|f - \pi_h^p (f)\|_{0,\oa}\|v\|_{0,\oa} +
\|g -\tpi_h^p(g)\|_{0,\partial \oa \cap \GA} \|v\|_{0,\partial \oa \cap \GA},
\end{align*}
and we conclude using a similar estimate as in~\eqref{eq_upper_bound_oscK} patchwise. \qed
\end{proof}

In the next lemma, we observe following~\cite[Theorem~7]{braess_pillwein_schoberl_2009a},
\cite[Remark~3.15]{ern_vohralik_2015a}, or~\cite[Corollary~3.6]{ErnVo:20}
that the dual norms of the residual can be characterized using ``continuous versions'' of the
minimization problems defining the equilibrated fluxes $\ssig_h^\aaa$. Recall that
$\Gamma_{\aaa}$ is the boundary $\partial \omega_{\aaa}$ without those faces of
$\Gamma_{\mathrm{D}}$ sharing the vertex $\aaa$.

\begin{lemm}[Dual characterization]
\label{lemma_dual_characterization}
We have
\begin{equation}
\label{eq_dual_characterization}
\|\RR_h(u_h)\|_{-1,\aaa} =
\min_{\substack{
\ttau^\aaa \in \HH(\ddiv,\oa)
\\
\div \ttau^\aaa = d_\aaa \; \text{\em in } \oa
\\
\ttau^\aaa {\cdot} \nn = b_\aaa \; \text{\em on } \Gamma_\aaa}}
\|\ttau^\aaa + \psi_\aaa \grad u_h\|_{0,\oa},
\end{equation}
where $b_\aaa$ and $d_\aaa$ are defined in~\eqref{eq_db}.
\end{lemm}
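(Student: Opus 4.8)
The plan is to establish \eqref{eq_dual_characterization} by recognizing the localized dual norm $\|\RR_h(u_h)\|_{-1,\aaa}$ as the value of a constrained minimization over $\HH(\ddiv,\oa)$ and then invoking a standard duality (inf--sup / saddle-point) argument, exactly as in the references cited just before the statement. First I would rewrite the action of $\RR_h(u_h)$ on test functions $\psi_\aaa v$ in a form that exposes $d_\aaa$ and $b_\aaa$. Starting from the definition \eqref{eq_res_proj} of $\RR_h(u_h)$ together with the definition \eqref{eq_b} of $b$, an integration by parts (Green's formula) on the patch $\oa$ against $\psi_\aaa v$, combined with Leibniz's rule $\grad(\psi_\aaa v) = \psi_\aaa \grad v + v \grad \psi_\aaa$, yields
\[
\langle \RR_h(u_h), \psi_\aaa v \rangle
= -(\psi_\aaa \grad u_h, \grad v)_\oa - (d_\aaa, v)_\oa + (b_\aaa, v)_{\partial\oa}
\]
for all $v \in H^1_\star(\oa)$, where $d_\aaa$ and $b_\aaa$ are precisely the quantities in \eqref{eq_db}. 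Here one has to be slightly careful about the two cases in the definition of $H^1_\star(\oa)$: when $\aaa \notin \overline{\GD}$ the test functions have zero mean, the boundary term lives on all of $\partial\oa = \Gamma_\aaa$, and the compatibility condition \eqref{tmp:compatibility_condition} is what makes the constraint set nonempty; when $\aaa \in \overline{\GD}$, the test functions vanish on the relevant Dirichlet faces, so those faces drop out of the boundary integral and $\Gamma_\aaa$ is again the correct portion of $\partial\oa$.

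Next I would identify $\|\RR_h(u_h)\|_{-1,\aaa}$ with a constrained minimization. By the definition \eqref{eq_res_loc}, $\|\RR_h(u_h)\|_{-1,\aaa}$ is the dual norm, with respect to the $|{\cdot}|_{1,\oa}$-seminorm on $H^1_\star(\oa)$, of the linear functional $v \mapsto -(\psi_\aaa \grad u_h, \grad v)_\oa - (d_\aaa, v)_\oa + (b_\aaa, v)_{\partial\oa}$. The claim is that this dual norm equals $\min \|\ttau^\aaa + \psi_\aaa \grad u_h\|_{0,\oa}$ over $\ttau^\aaa \in \HH(\ddiv,\oa)$ satisfying $\div\ttau^\aaa = d_\aaa$ in $\oa$ and $\ttau^\aaa{\cdot}\nn = b_\aaa$ on $\Gamma_\aaa$. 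This is the classical duality relation between constrained flux minimization and a dual-norm-of-residual; I would prove it by introducing the mixed/saddle-point problem whose primal variable is $\ttau^\aaa$ and whose Lagrange multiplier is $v \in H^1_\star(\oa)$ enforcing the divergence constraint (with the normal-trace constraint imposed essentially on $\Gamma_\aaa$), writing the Euler--Lagrange optimality system, and observing that the optimal multiplier $v^\star$ realizes the supremum in the dual norm. Equivalently, and perhaps more cleanly, one can shift the unknown: setting $\ttau^\aaa = -\psi_\aaa\grad u_h + \boldsymbol\zeta$, the problem becomes minimizing $\|\boldsymbol\zeta\|_{0,\oa}$ over $\boldsymbol\zeta \in \HH(\ddiv,\oa)$ with $\div\boldsymbol\zeta = d_\aaa + \div(\psi_\aaa\grad u_h)$ and $\boldsymbol\zeta{\cdot}\nn = b_\aaa + \psi_\aaa\grad u_h{\cdot}\nn$ on $\Gamma_\aaa$, and then the orthogonal decomposition $\LL^2(\oa) = \grad H^1_\star(\oa) \oplus \{\boldsymbol\zeta : \div\boldsymbol\zeta = \text{fixed}, \ \boldsymbol\zeta{\cdot}\nn = \text{fixed}\}^{\perp\text{-complement datum}}$ makes the equality transparent: the minimal-norm $\boldsymbol\zeta$ is characterized by $(\boldsymbol\zeta,\grad v)_\oa$ being prescribed for all $v \in H^1_\star(\oa)$, and its norm equals exactly the dual norm of that prescribed functional.

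I expect the main obstacle to be the careful bookkeeping of the two cases $\aaa \in \overline{\GD}$ versus $\aaa \notin \overline{\GD}$ in the well-posedness of the saddle-point problem and the correct handling of boundary terms: in the interior/Neumann-vertex case one needs the zero-mean constraint and the compatibility condition \eqref{tmp:compatibility_condition} to guarantee both that the constraint set in \eqref{eq_dual_characterization} is nonempty and that the inf--sup condition holds, whereas in the Dirichlet-vertex case one must check that the Poincar\'e--Friedrichs inequality on $H^1_\star(\oa)$ still holds and that imposing $\ttau^\aaa{\cdot}\nn = b_\aaa$ only on $\Gamma_\aaa$ (leaving the normal trace free on the Dirichlet faces through $\aaa$) is exactly dual to not requiring $v=0$ there. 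All of this is entirely standard — it is precisely \cite[Theorem~7]{braess_pillwein_schoberl_2009a}, \cite[Remark~3.15]{ern_vohralik_2015a}, \cite[Corollary~3.6]{ErnVo:20} transplanted to the present Helmholtz-specific choices of $d_\aaa$ and $b_\aaa$ — so I would present the proof compactly, doing the Green's-formula computation explicitly and then citing the abstract duality result, verifying only that the hypotheses (nonempty constraint set via \eqref{tmp:compatibility_condition}, Poincar\'e--Friedrichs on $H^1_\star(\oa)$) are met in both vertex cases.
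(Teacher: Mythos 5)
Your proposal follows essentially the same route as the paper: the paper also first rewrites $\langle \RR_h(u_h),\psi_\aaa v\rangle$ via Leibniz's rule to expose $d_\aaa$ and $b_\aaa$, then introduces the Riesz representer $r_\aaa\in H^1_\star(\oa)$ of this functional and checks that $\ssig^\aaa=-(\grad r_\aaa+\psi_\aaa\grad u_h)$ is the (admissible) minimizer, which is exactly your saddle-point/orthogonal-decomposition argument written out concretely. One small correction: the signs in your displayed identity are off — the correct relation is $\langle \RR_h(u_h),\psi_\aaa v\rangle=(d_\aaa,v)_{\oa}-(b_\aaa,v)_{\partial\oa\cap\GA}-(\psi_\aaa\grad u_h,\grad v)_{\oa}$, and if your signs were carried through the duality would yield $\min\|\ttau^\aaa-\psi_\aaa\grad u_h\|_{0,\oa}$ rather than the claimed $\min\|\ttau^\aaa+\psi_\aaa\grad u_h\|_{0,\oa}$.
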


\begin{proof}
We introduce $r_\aaa$ as the unique element of $H^1_\star(\oa)$ such that
\begin{equation*}
(\grad r_\aaa,\grad v)_{\oa} = \langle \RR_h(u_h),\psi_\aaa v\rangle
\qquad \forall v \in H^1_\star(\oa).
\end{equation*}
Let $v \in H^1_\star(\oa)$.
The definitions~\eqref{eq_db} and~\eqref{eq_res_proj} together with the identity
$\grad(\psi_\aaa v) = \psi_\aaa \grad v + v \grad \psi_\aaa$
show that
\begin{eqnarray*}
\langle \RR_h(u_h),\psi_\aaa v \rangle
&=&
(d_\aaa,v)_{\oa} - (b_\aaa,v)_{\partial \oa \cap \GA} - (\psi_\aaa \grad u_h,\grad v)_{\oa},
\end{eqnarray*}
and therefore
\begin{equation*}
(\grad r_\aaa,\grad v)_{\oa}
=
(d_\aaa,v)_{\oa} - (b_\aaa,v)_{\partial \oa \cap \GA} - (\psi_\aaa \grad u_h,\grad v)_{\oa}
\qquad \forall v \in H^1_\star(\oa).
\end{equation*}
Thus, $\ssig^\aaa \eq -\left (\grad r_\aaa + \psi_\aaa \grad u_h\right ) \in \HH(\ddiv,\oa)$
satisfies $\div \ssig^\aaa = d_\aaa$ and $\ssig^\aaa {\cdot} \nn = b_\aaa$ on $\Gamma_\aaa$
as well as
\begin{equation*}
\left \{
\begin{array}{rcll}
(\ssig^\aaa,\vv)_\oa - (r_\aaa,\div \vv)_\oa &=& -(\psi_\aaa \grad u_h,\vv)_\oa
&
\forall \vv \in \HH_\GA(\ddiv,\oa),
\\
(\div \ssig^\aaa,v)_\oa &=& (d_\aaa,v)_\oa
&
\forall v \in H^1_\star(\oa),
\end{array}
\right .
\end{equation*}
so that $\ssig^\aaa$ is the unique minimizer in the right-hand side of~\eqref{eq_dual_characterization}.
Then, the conclusion follows since we have
$\|\RR_h(u_h)\|_{-1,\aaa}
=
|r_\aaa|_{1,\oa}
=
\|\ssig^\aaa + \psi_\aaa \grad u_h\|_{0,\oa}$. \qed
\end{proof}

The following key estimate directly follows from \cite[Theorem~7]{braess_pillwein_schoberl_2009a}
in two space dimensions and~\cite[Corollaries 3.3 and 3.8]{ErnVo:20} in three space
dimensions. 

\begin{lemm}[Stability of discrete minimization]
\label{lemma_local_estimations}
For every vertex $\aaa \in \VV_h$, we have
\[
\|\ssig^\aaa_h + \psi_\aaa \grad u_h\|_{0,\oa}
\leq
\Csta
\min_{\substack{
\ttau^\aaa \in \HH(\ddiv,\oa)
\\
\div \ttau^\aaa = d_\aaa \; \text{\em in } \oa
\\
\ttau^\aaa {\cdot} \nn = b_\aaa \; \text{\em on } \Gamma_\aaa
}}
\|\ttau^\aaa + \psi_\aaa \grad u_h\|_{0,\oa},
\]
where $\Csta$ only depends on $\kappa_{\TT_\aaa}$.
\end{lemm}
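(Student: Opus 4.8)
The plan is to recognize this statement as a direct transcription of the by-now standard \emph{$p$-robust stability of local flux equilibration on a single vertex patch}, proved in two space dimensions in~\cite[Theorem~7]{braess_pillwein_schoberl_2009a} and in three space dimensions in~\cite[Corollaries~3.3 and~3.8]{ErnVo:20}. Those references establish precisely that, on a shape-regular patch of simplices sharing a single vertex, the constrained discrete Raviart--Thomas minimization is stable with respect to its infinite-dimensional counterpart, with a constant depending only on the patch shape-regularity (and on whether the patch is interior or touches a Dirichlet part of the boundary, the latter case being handled by the same arguments). Hence the task reduces to matching the objects of Definition~\ref{def_sigma_a} to the hypotheses of these results and then invoking them; there is no genuinely new computation to perform.

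The first step is to verify that the polynomial degrees line up. Since $\psi_\aaa$ is piecewise affine, $\psi_\aaa \pi_h^p(f)$ and $k^2\psi_\aaa u_h$ are piecewise polynomials of degree at most $p+1$, while $\grad\psi_\aaa\cdot\grad u_h$ has degree at most $p-1$, so that $d_\aaa\in\PP_{p+1}(\TT_\aaa)$; likewise $b_\aaa$ restricted to each face of $\Gamma_\aaa$ belongs to $\PP_{p+1}$ of that face, and the target field $-\psi_\aaa\grad u_h$ is piecewise polynomial of degree at most $p$, hence lies in $\PPP_{p+1}(\TT_\aaa)\subset\RT_{p+1}(\TT_\aaa)$. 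Because $\div\RT_{p+1}(K)=\PP_{p+1}(K)$ and the normal trace of a field in $\RT_{p+1}(K)$ on a face lies in $\PP_{p+1}$ of that face, the space $\RT_{p+1}(\TT_\aaa)\cap\HH(\ddiv,\oa)$ appearing in~\eqref{eq_definition_flux} is exactly calibrated to the data $(d_\aaa,b_\aaa)$, so no ``wasted'' polynomial degrees occur. The compatibility relation $(d_\aaa,1)_\oa=(b_\aaa,1)_{\partial\oa}$ for interior vertices $\aaa\notin\overline{\GD}$, already established in~\eqref{tmp:compatibility_condition}, is precisely the condition making both the discrete and the continuous constraint sets nonempty; for boundary vertices $\aaa\in\overline{\GD}$ no such condition is needed since $\Gamma_\aaa$ omits the Dirichlet faces meeting $\aaa$. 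Finally, $\TT_\aaa$ is shape-regular with parameter $\kappa_{\TT_\aaa}$ and has the single-vertex simplicial structure required by the cited statements.

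With these verifications in place, the cited stability results apply on $\TT_\aaa$ essentially verbatim: in two dimensions~\cite[Theorem~7]{braess_pillwein_schoberl_2009a}, and in three dimensions the combination of~\cite[Corollaries~3.3 and~3.8]{ErnVo:20} (jointly handling the divergence constraint in $\oa$ and the normal-trace constraint on $\Gamma_\aaa$), furnish a constant $\Csta$ depending only on $\kappa_{\TT_\aaa}$ such that the constrained discrete minimizer $\ssig_h^\aaa$ from~\eqref{eq_definition_flux} obeys the asserted bound against the minimum of $\|\ttau^\aaa+\psi_\aaa\grad u_h\|_{0,\oa}$ over all $\ttau^\aaa\in\HH(\ddiv,\oa)$ with $\div\ttau^\aaa=d_\aaa$ in $\oa$ and $\ttau^\aaa\cdot\nn=b_\aaa$ on $\Gamma_\aaa$; crucially, $\Csta$ is independent of $p$, and the reverse inequality is trivial since the discrete constraint set is contained in the continuous one. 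The only point demanding real care — and thus the ``hard part'' of this otherwise routine reduction — is ensuring that the geometric conventions of the cited theorems (interior versus Dirichlet-touching patches, the exact role of the extra polynomial degree $p+1$, and the normalization of the patch) coincide with the present ones, so that the resulting $\Csta$ genuinely depends on nothing beyond $\kappa_{\TT_\aaa}$.
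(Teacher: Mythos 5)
Your proposal matches the paper exactly: the paper gives no proof of this lemma at all, stating only that it ``directly follows from \cite[Theorem~7]{braess_pillwein_schoberl_2009a} in two space dimensions and \cite[Corollaries 3.3 and 3.8]{ErnVo:20} in three space dimensions,'' which are precisely the references you invoke. Your additional verifications (degree count for $d_\aaa$, $b_\aaa$, and $-\psi_\aaa\grad u_h$ against $\RT_{p+1}(\TT_\aaa)$, and the compatibility condition~\eqref{tmp:compatibility_condition}) are correct and simply make explicit what the paper leaves implicit.
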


The following proposition gathers intermediate results established throughout
Sections~\ref{sec:abstract_error_estimates} and~\ref{sec:equilibrated_estimator}
and proves the local lower bound~\eqref{eq_est_low_loc} of Theorem~\ref{theorem_lower_bounds}.
We denote by $\VV_K$ the set of vertices of the mesh cell $K \in \mathcal T_h$.

\begin{prop}[Local lower bound]
\label{prop_local_lower_bounds}
We have
\begin{equation*}
\eta_K \leq \ClowK \norm{u-u_h}_{1,k,\oK} + C(\kappa_{\TT_K}) \osc_{\TT_K}(f,g)
\qquad \forall K \in \TT_h,
\end{equation*}
where
\begin{equation*}
\ClowK \eq (d+1) \max_{\aaa \in \VV_K} \Csta \Clowa.
\end{equation*}
\end{prop}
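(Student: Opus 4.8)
The plan is to combine the local characterization of $\|\RR_h(u_h)\|_{-1,\aaa}$ with the stability estimate for the discrete minimization and the local lower bound of the residual by the energy norm. First I would fix a mesh cell $K \in \TT_h$ and write $\eta_K = \|\ssig_h + \grad u_h\|_{0,K}$. Since the global flux is assembled as $\ssig_h = \sum_{\aaa \in \VV_h} \ssig_h^\aaa$ with each $\ssig_h^\aaa$ supported in $\oa$, only the vertices $\aaa \in \VV_K$ contribute on $K$, and since the hat functions form a partition of unity we may write $\grad u_h|_K = \sum_{\aaa \in \VV_K} \psi_\aaa \grad u_h|_K$. Hence
\begin{equation*}
\ssig_h + \grad u_h \big|_K = \sum_{\aaa \in \VV_K} \left(\ssig_h^\aaa + \psi_\aaa \grad u_h\right)\big|_K,
\end{equation*}
and the triangle inequality on $K$, followed by extending each norm from $K$ to $\oa$, yields
\begin{equation*}
\eta_K \leq \sum_{\aaa \in \VV_K} \|\ssig_h^\aaa + \psi_\aaa \grad u_h\|_{0,\oa}.
\end{equation*}

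Next I would bound each patchwise term. By Lemma~\ref{lemma_local_estimations} (stability of discrete minimization), $\|\ssig_h^\aaa + \psi_\aaa \grad u_h\|_{0,\oa} \leq \Csta \min_{\ttau^\aaa} \|\ttau^\aaa + \psi_\aaa \grad u_h\|_{0,\oa}$ over $\HH(\ddiv,\oa)$ with the prescribed divergence and normal-trace constraints. By Lemma~\ref{lemma_dual_characterization} (dual characterization), this minimum equals $\|\RR_h(u_h)\|_{-1,\aaa}$. Then Lemma~\ref{lem_data_osc} gives $\|\RR_h(u_h)\|_{-1,\aaa} \leq \|\RR(u_h)\|_{-1,\aaa} + C(\kappa_{\TT_\aaa}) \osc_{\TT_\aaa}(f,g)$, and finally Lemma~\ref{lem_lower_bounds} provides $\|\RR(u_h)\|_{-1,\aaa} \leq \Clowa \norm{u-u_h}_{1,k,\oa}$. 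Chaining these four bounds, each patchwise contribution is controlled by $\Csta \Clowa \norm{u-u_h}_{1,k,\oa} + \Csta C(\kappa_{\TT_\aaa}) \osc_{\TT_\aaa}(f,g)$.

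To conclude, I would sum over the $|\VV_K| = d+1$ vertices of $K$. Since $\oa \subset \oK$ for each $\aaa \in \VV_K$, we have $\norm{u-u_h}_{1,k,\oa} \leq \norm{u-u_h}_{1,k,\oK}$, and likewise $\osc_{\TT_\aaa}(f,g) \leq \osc_{\TT_K}(f,g)$. Replacing each $\Csta \Clowa$ by its maximum over $\aaa \in \VV_K$ and absorbing shape-regularity constants, one obtains
\begin{equation*}
\eta_K \leq (d+1)\left(\max_{\aaa \in \VV_K} \Csta \Clowa\right) \norm{u-u_h}_{1,k,\oK} + C(\kappa_{\TT_K}) \osc_{\TT_K}(f,g),
\end{equation*}
which is exactly the claim with $\ClowK = (d+1)\max_{\aaa \in \VV_K} \Csta \Clowa$. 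The only mild subtlety is bookkeeping the shape-regularity dependence: the constants $\Csta$, $\CPa$, $\Ctra$, $\CcPFa$, and $\Cqia$ each depend only on $\kappa_{\TT_\aaa}$, so their maxima over $\aaa \in \VV_K$ depend only on $\kappa_{\TT_K}$; this is routine but should be stated to justify the final form of $\ClowK$. No serious obstacle is expected here, since all the heavy lifting (the discrete stability estimate, the quasi-interpolation bound) is imported from the preceding lemmas.
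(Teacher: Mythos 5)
Your proof is correct and follows essentially the same route as the paper: the partition-of-unity decomposition of $\ssig_h+\grad u_h$ on $K$, the triangle inequality over the $(d+1)$ vertices of $K$, and the chaining of Lemmas~\ref{lemma_local_estimations}, \ref{lemma_dual_characterization}, \ref{lem_data_osc}, and~\ref{lem_lower_bounds} are exactly the paper's argument. Your added justification of the decomposition identity and the bookkeeping of shape-regularity constants only make explicit what the paper leaves implicit.
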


\begin{proof}
Combining Lemmas~\ref{lem_lower_bounds}, \ref{lem_data_osc}, \ref{lemma_dual_characterization},
and~\ref{lemma_local_estimations}, we infer that
\begin{equation} \label{eq_loc_eff_oma}
    \|\sig_h^\aaa + \psi_\aaa \grad u_h\|_{0,\oa}
    \leq
    \Csta \Clowa \norm{u-u_h}_{1,k,\oa} + \Csta C(\kappa_{\TT_\aaa}) \osc_{\TT_\aaa}(f,g).
\end{equation}
Since
\begin{equation*}
\eta_K \leq \sum_{\aaa \in \VV_K} \|\sig^\aaa_h + \psi_\aaa \grad u_h\|_{0,\oa}
\end{equation*}
and as each $K \in \TT_h$ has $(d+1)$ vertices and the neighboring elements have a similar diameter,
the assertion follows. \qed
\end{proof}

Finally, the following estimate is obtained by summation of the local
lower bounds~\eqref{eq_loc_eff_oma} established in the proof of
Proposition~\ref{prop_local_lower_bounds} and proves the global lower bound~\eqref{eq_est_low_glob}
of Theorem~\ref{theorem_lower_bounds}:

\begin{prop}[Global lower bound]
\label{prop_global_lower_bound}
We have
\begin{equation*}
\eta \leq \Clow \norm{u-u_h}_{1,k,\Omega} + C(\kappa) \osc(f,g),
\end{equation*}
where
\begin{equation*}
\Clow \eq (d+1) \max_{\aaa \in \VV_h} \Csta \Clowa.
\end{equation*}
\end{prop}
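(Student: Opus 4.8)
The plan is to assemble the global bound from the patchwise estimate~\eqref{eq_loc_eff_oma} derived inside the proof of Proposition~\ref{prop_local_lower_bounds}, taking advantage of the bounded overlap of the \emph{vertex} patches $\{\oa\}_{\aaa\in\VV_h}$; this is what yields a sharper constant than a crude summation of the element-patch bound~\eqref{eq_est_low_loc}.

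First I would set $\ww_\aaa\eq\ssig_h^\aaa+\psi_\aaa\grad u_h$ and record the decomposition $\ssig_h+\grad u_h=\sum_{\aaa\in\VV_h}\ww_\aaa$, which holds because $\ssig_h=\sum_\aaa\ssig_h^\aaa$ by~\eqref{eq_definition_flux_glob} and $\sum_\aaa\psi_\aaa\equiv1$ on $\overline\Omega$; each $\ww_\aaa$ is supported in $\oa$, and on a cell $K\in\TT_h$ only the $(d+1)$ contributions with $\aaa\in\VV_K$ are nonzero there. A cellwise Cauchy--Schwarz inequality then gives
\[
\eta^2=\|\ssig_h+\grad u_h\|_{0,\Omega}^2=\sum_{K\in\TT_h}\Big\|\sum_{\aaa\in\VV_K}\ww_\aaa\Big\|_{0,K}^2\le(d+1)\sum_{K\in\TT_h}\sum_{\aaa\in\VV_K}\|\ww_\aaa\|_{0,K}^2=(d+1)\sum_{\aaa\in\VV_h}\|\ww_\aaa\|_{0,\oa}^2 .
\]
Next I would insert the patchwise bound~\eqref{eq_loc_eff_oma}, namely $\|\ww_\aaa\|_{0,\oa}\le\Csta\Clowa\norm{u-u_h}_{1,k,\oa}+\Csta C(\kappa_{\TT_\aaa})\osc_{\TT_\aaa}(f,g)$, and split the resulting $\ell^2(\VV_h)$-sum by Minkowski's inequality into an error part and a data-oscillation part. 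Bounding $\Csta\Clowa$ and $\Csta C(\kappa_{\TT_\aaa})$ by their maxima over $\aaa\in\VV_h$ and using the exact overlap identities $\sum_{\aaa\in\VV_h}\norm{v}_{1,k,\oa}^2=(d+1)\norm{v}_{1,k,\Omega}^2$ (obtained by summing the three terms of the local energy norm, each mesh cell and each face $F\subset\GA$ being counted exactly $(d+1)$ times) and $\sum_{\aaa\in\VV_h}\osc_{\TT_\aaa}(f,g)^2=(d+1)\osc(f,g)^2$, one arrives at
\[
\Big(\sum_{\aaa\in\VV_h}\|\ww_\aaa\|_{0,\oa}^2\Big)^{1/2}\le(d+1)^{1/2}\Big(\big(\max_{\aaa\in\VV_h}\Csta\Clowa\big)\norm{u-u_h}_{1,k,\Omega}+\big(\max_{\aaa\in\VV_h}\Csta C(\kappa_{\TT_\aaa})\big)\osc(f,g)\Big).
\]
Multiplying by the remaining factor $(d+1)^{1/2}$ and setting $\Clow\eq(d+1)\max_{\aaa\in\VV_h}\Csta\Clowa$ together with $C(\kappa)\eq(d+1)\max_{\aaa\in\VV_h}\Csta C(\kappa_{\TT_\aaa})$ yields the stated estimate.

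There is no genuine obstacle here; the only care needed is in the bookkeeping of the overlap constants — in particular using Minkowski's inequality on the $\ell^2(\VV_h)$-sum (rather than first squaring the two-term patchwise bound, which would introduce a spurious factor) so that the single factor $(d+1)$ in front of $\norm{u-u_h}_{1,k,\Omega}$ is not inflated, and checking that the boundary contribution $k\|{\cdot}\|_{0,\partial\oa\cap\GA}^2$ reassembles into $k\|{\cdot}\|_{0,\GA}^2$ with multiplicity exactly $(d+1)$ (each boundary face $F\subset\GA$ of a cell $K$ lies on $\partial\oa$ precisely for the $d+1$ vertices $\aaa\in\VV_K$).
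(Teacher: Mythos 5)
Your proposal is correct and follows exactly the route the paper indicates: it sums the patchwise bounds~\eqref{eq_loc_eff_oma} over the vertices, using the decomposition $\ssig_h+\grad u_h=\sum_{\aaa\in\VV_h}(\ssig_h^\aaa+\psi_\aaa\grad u_h)$, cellwise Cauchy--Schwarz over the $(d+1)$ vertices of each element, and the $(d+1)$-fold overlap of the vertex patches to recover the global norms. The paper leaves these bookkeeping details implicit (its proof is the one-line remark preceding the proposition), and your careful treatment of the $\ell^2$ splitting and of the multiplicity of boundary faces fills them in correctly, yielding precisely the stated constant $\Clow=(d+1)\max_{\aaa\in\VV_h}\Csta\Clowa$.
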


\begin{rema}[Computable lower bound]
Lemma~\ref{lem_lower_bounds} shows that $\Clowa \leq \Clowerhat$, where the constant
$\Clowerhat$ is fully computable. Since we can compute
an upper bound for $\Csta$, see \cite[Lemma 3.23]{ern_vohralik_2015a},
we are able to provide a fully computable lower bound for the error.
\end{rema}

\section{Preasymptotic error estimates}
\label{sec_pre_asymptotic}

In this section, we establish the results stated in Theorem~\ref{theorem_cba}.
We only detail the proof for Cases~1a) and 1b), as the other
cases (related to the interior problem) easily follow from
standard properties of spectral decomposition and the regularity shift
\begin{equation*}
|\phi|_{2,\Omega} \leq
\|\Delta \phi\|_{0,\Omega},
\end{equation*}
for all $\phi \in H^1_0(\Omega)$ with $\Delta \phi \in L^2(\Omega)$,
which is valid when $\Omega$ a convex polytope, see~\cite{grisvard_1992a}
for instance. The aim of this section is thus to prove the bounds~\eqref{eq_cba_coarse}
and~\eqref{eq_cba_fine} of Theorem~\ref{theorem_cba}.

\subsection{A stability estimate in $L^2(\Omega)$}

Under the assumptions of Case~1a) or 1b), the cornerstone
of the analysis is a stability estimate
that we establish hereafter. Similar upper bounds are available in~\cite{hetmaniuk_2007a,melenk_1995a}
for the setting considered here, and we refer the reader to
\cite{chandlerwilde_spence_gibbs_smyshlyaev_2017a,chaumontfrelet_nicaise_2018a,spence_2014a}
for more complex geometries. However, these estimates are not as sharp as possible
and/or the constant $\Cstabx$ is not computable, since they have been derived having a
priori error estimation (or simply, stability analysis) in mind. We provide here sharper,
fully-computable estimates.

\begin{lemm}[$L^2(\Omega)$ stability estimate]
Let $\Omega = \Omega_0 \setminus \overline{D}$, $\GA = \partial \Omega_0$,
and $\GD = \partial D$, where $\Omega_0,D \subset \mathbb R^d$ are two
open, bounded, and connected sets such that $\overline D$ is a proper subset of $\Omega_0$.
Assume that the subset $\mathcal{O}_{\GD,\GA}$ defined in~\eqref{eq_def_Omega_x0} is nonempty.
For all $\phi \in L^2(\Omega)$, let the (adjoint) solution
$u_\phi^\star\in H^1_\GD(\Omega)$ solve \eqref{eq_helmholtz_weak_adjoint}, \ie,
$b(w,u_\phi^\star) = (w,\phi)$ for all $w \in H^1_\GD(\Omega)$. Then, we have
\begin{equation}
\label{eq_stability_L2}
k^2 \|u_\phi^\star\|_{0,\Omega} \leq
\left ((d-1) + \Cstab k h_\Omega \right )
\|\phi\|_{0,\Omega},
\end{equation}
with $\Cstab$ is defined in~\eqref{eq_Cstab}.
\end{lemm}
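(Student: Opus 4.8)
The plan is to prove the $L^2(\Omega)$ stability estimate \eqref{eq_stability_L2} via a Rellich–Morawetz identity, testing the adjoint equation against a carefully chosen multiplier built from the radial field $\mathbf m(\xx) \eq \xx - \xx_0$ for $\xx_0 \in \mathcal O_{\GD,\GA}$. Concretely, I would multiply $-k^2 u_\phi^\star - \Delta u_\phi^\star = \phi$ by $\overline{(\mathbf m \cdot \grad u_\phi^\star)}$ and also by $\tfrac{d-1}{2}\overline{u_\phi^\star}$, integrate over $\Omega$, take real parts, and add. The two standard algebraic identities needed are $\Re\big((\mathbf m \cdot \grad w) \overline{w}\big) = \tfrac12 \div\!\big(\mathbf m |w|^2\big) - \tfrac d2 |w|^2$ for the zeroth-order term, and, for the principal part,
\[
\Re\!\int_\Omega \Delta w \, \overline{(\mathbf m \cdot \grad w)}
= \Big(\tfrac d2 - 1\Big)\int_\Omega |\grad w|^2
- \Re\!\int_{\partial\Omega} (\grad w \cdot \nn)\overline{(\mathbf m \cdot \grad w)}
+ \tfrac12 \int_{\partial\Omega} (\mathbf m \cdot \nn)|\grad w|^2,
\]
valid for sufficiently smooth $w$ (the general case follows by a density/regularity argument since $\Omega$ is a Lipschitz polytope and $u_\phi^\star$ solves a Helmholtz problem with $L^2$ data). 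Combining these, the $|\grad u_\phi^\star|^2$ volume contributions cancel, and one is left with $k^2\|u_\phi^\star\|_{0,\Omega}^2$ plus boundary terms on $\GD$ and $\GA$ on one side, balanced against $\Re\int_\Omega \phi\,\overline{(\mathbf m\cdot\grad u_\phi^\star)} + \tfrac{d-1}{2}\Re\int_\Omega \phi\,\overline{u_\phi^\star}$ on the other.

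Next I would dispose of the boundary terms. On $\GD$ we have $u_\phi^\star = 0$, hence $\grad u_\phi^\star$ is parallel to $\nn$ there, so $\mathbf m\cdot\grad u_\phi^\star = (\mathbf m\cdot\nn)(\grad u_\phi^\star\cdot\nn)$ and $|\grad u_\phi^\star|^2 = |\grad u_\phi^\star\cdot\nn|^2$; the $\GD$-boundary contribution becomes $-\tfrac12\int_{\GD}(\mathbf m\cdot\nn)|\grad u_\phi^\star\cdot\nn|^2$, which is $\geq 0$ precisely because $(\xx-\xx_0)\cdot\nn \leq 0$ on $\GD$ — this is where the first defining condition of $\mathcal O_{\GD,\GA}$ is used, and this term is simply dropped to obtain a lower bound. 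On $\GA$, the Robin condition $\grad u_\phi^\star\cdot\nn = -iku_\phi^\star$ turns the boundary integrals into expressions in $u_\phi^\star|_{\GA}$ and its tangential gradient; writing $|\grad u_\phi^\star|^2 = |\grad u_\phi^\star\cdot\nn|^2 + |\grad u_\phi^\star\times\nn|^2 = k^2|u_\phi^\star|^2 + |\grad_T u_\phi^\star|^2$ and decomposing $\mathbf m\cdot\grad u_\phi^\star$ into normal and tangential parts, one gets a $\GA$-contribution that, after using $(\xx-\xx_0)\cdot\nn > 0$ there and a Young-type inequality on the tangential cross term (completing the square with weight $(\xx-\xx_0)\cdot\nn$, which is exactly what produces the factor $2(\xx-\xx_0)\cdot\nn + |(\xx-\xx_0)\times\nn|^2/((\xx-\xx_0)\cdot\nn)$), can be bounded below by $-\,C \cdot k^2\|u_\phi^\star\|_{0,\GA}^2$ with $C$ matching the $\GA$-supremum in $\Cstabx$.

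Finally I would control the right-hand side: $|\Re\int_\Omega \phi\,\overline{(\mathbf m\cdot\grad u_\phi^\star)}| \leq (\sup_\Omega|\mathbf m|)\,\|\phi\|_{0,\Omega}|u_\phi^\star|_{1,\Omega}$ and $\tfrac{d-1}{2}|\Re\int_\Omega\phi\,\overline{u_\phi^\star}| \leq \tfrac{d-1}{2}\|\phi\|_{0,\Omega}\|u_\phi^\star\|_{0,\Omega}$, and I would also need the auxiliary bounds $|u_\phi^\star|_{1,\Omega} \lesssim k\|\phi\|_{0,\Omega}$-type estimates or rather feed the gradient norm back in: testing $b(u_\phi^\star,u_\phi^\star)$ directly against $u_\phi^\star$ and taking real and imaginary parts gives $k\|u_\phi^\star\|_{0,\GA}^2 = \Im(u_\phi^\star,\phi)$ and $|u_\phi^\star|_{1,\Omega}^2 = k^2\|u_\phi^\star\|_{0,\Omega}^2 + \Re(u_\phi^\star,\phi)$, which let me re-express $\|u_\phi^\star\|_{0,\GA}^2$ and $|u_\phi^\star|_{1,\Omega}^2$ in terms of $\|u_\phi^\star\|_{0,\Omega}$ and $\|\phi\|_{0,\Omega}$. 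Substituting everything back yields a quadratic inequality of the form $a\,X^2 \leq c + b\,X$ in $X \eq k\|u_\phi^\star\|_{0,\Omega}$, from which Lemma~\ref{lemma_polynomial} extracts the stated linear bound, after collecting constants so that the coefficient of $kh_\Omega$ is exactly $\Cstab h_\Omega$ and the constant term is $d-1$. I expect the main obstacle to be the \emph{careful bookkeeping of the $\GA$-boundary terms}: getting the precise combination $2(\xx-\xx_0)\cdot\nn + |(\xx-\xx_0)\times\nn|^2/((\xx-\xx_0)\cdot\nn)$ with the right constant (rather than a cruder bound) requires choosing the Young inequality weights optimally and tracking the $ik$ factors from the Robin condition without losing sharpness; the volume terms and the $\GD$ sign condition are comparatively routine.
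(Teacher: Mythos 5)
Your proposal is correct and follows essentially the same route as the paper: the Rellich--Morawetz multiplier $\yy \cdot \grad \overline{u_\phi^\star}$ with $\yy = \xx - \xx_0$, the sign conditions defining $\mathcal{O}_{\GD,\GA}$ to discard the $\GD$ boundary term, the weighted Young inequality on $\GA$ producing exactly the factor $2\,\yy{\cdot}\nn + |\yy\times\nn|^2/(\yy{\cdot}\nn)$, the energy identities obtained by testing with $u_\phi^\star$ itself to control $|u_\phi^\star|_{1,\Omega}$ and $\|u_\phi^\star\|_{0,\GA}$, and finally Lemma~\ref{lemma_polynomial} applied to the resulting quadratic inequality in $k\|u_\phi^\star\|_{0,\Omega}$. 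The only cosmetic difference is that you include the $\tfrac{d-1}{2}\overline{u_\phi^\star}$ term in the multiplier from the start (which in fact leaves a residual $+|u_\phi^\star|_{1,\Omega}^2$ rather than an exact cancellation, subsequently absorbed by the Young step on the source term, with the extra $\GA$ boundary contribution vanishing since $-ik$ is purely imaginary), whereas the paper keeps only $\yy\cdot\grad\overline{u_\phi^\star}$ and eliminates the $(d-2)|u_\phi^\star|_{1,\Omega}^2$ term at the end via the same energy identity --- the two bookkeepings yield the identical final constant.
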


\begin{proof}
Let $\xx_0 \in \mathcal{O}_{\GD,\GA}$, \ie, we have
\begin{equation*}
(\xx - \xx_0) {\cdot} \nn \leq 0 \; \forall \xx \in \GD \quad
(\xx - \xx_0) {\cdot} \nn > 0 \; \forall \xx \in \GA.
\end{equation*}
Let us set $\yy(\xx) \eq \xx - \xx_0$, $\yy_{\ttau} \eq \yy - (\yy {\cdot} \nn)\nn$,
$\grad_{\ttau} u_\phi^\star \eq \grad u_\phi^\star - (\grad u_\phi^\star {\cdot} \nn) \nn$.
In strong form, the (adjoint) solution $u_\phi^\star$ is such that
\begin{equation*}
\left \{
\begin{array}{rcll}
-k^2 u_\phi^\star - \Delta u_\phi^\star &=& \phi & \text{ in } \Omega,
\\
u_\phi^\star &=& 0 & \text{ on } \GD,
\\
\grad u_\phi^\star{\cdot}\nn + iku_\phi^\star &=& 0 & \text{ on } \GA.
\end{array}
\right .
\end{equation*}
The key idea of the proof is to multiply the first equation by the test function
$\overline{w} \eq \yy {\cdot} \grad \overline{u_\phi^\star}$ (here $\overline{\cdot}$
denotes the complex conjugate) and employ integration by parts techniques. Let us point
out that since $\Omega$ is a polytopal domain and $\GD$ and $\GA$ are well separated,
we have $u_\phi^\star \in H^{\frac32+\varepsilon}(\Omega)$ for some $\varepsilon > 0$,
and therefore $u_\phi^\star$ and $w$ are sufficiently smooth to allow the operations
performed hereafter (see in particular Section 3.3 of~\cite{hetmaniuk_2007a}).

First, applying Green's formula, we have
\begin{equation*}
2\Re \left \{
-k^2 \int_\Omega u_\phi^\star\left (\yy {\cdot} \grad \overline{u_\phi^\star}\right )
\right \}
=
-k^2\int_\Omega \yy {\cdot} \grad |u_\phi^\star|^2
=
dk^2 \|u_\phi^\star\|_{0,\Omega}^2 - k^2 \int_{\partial \Omega} |u_\phi^\star|^2 \yy {\cdot} \nn,
\end{equation*}
where we used the fact that $\div \yy = d$. Recalling Rellich's identity
\begin{equation*}
2\Re \left \{\int_\Omega
  \grad(\yy {\cdot} \grad \overline{u_\phi^\star}) {\cdot} \grad u_\phi^\star
\right \}
=
-(d-2) |u_\phi^\star|_{1,\Omega}^2 +
\int_{\partial \Omega} |\grad u_\phi^\star|^2 \yy {\cdot} \nn,
\end{equation*}
which can be obtained by integration by parts
(see \cite[proof of Proposition 3.3]{hetmaniuk_2007a} for instance), we have
\begin{align*}
& 2\Re \left \{-\int_\Omega \Delta u_\phi^\star \yy {\cdot} \grad \overline{u_\phi^\star} \right \}\\
&=
2 \Re \int_\Omega \grad u_\phi^\star {\cdot} \grad (\yy {\cdot} \grad \overline{u_\phi^\star})
- 2\Re \int_\Omega \left (\grad u_\phi^\star {\cdot} \nn\right ) \yy {\cdot} \grad \overline{u_\phi^\star}
\\
&=
-(d-2)|u_\phi^\star|_{1,\Omega}^2 + \int_{\partial \Omega} |\grad u_\phi^\star|^2 \yy {\cdot} \nn
- 2\Re \int_{\partial \Omega} \left (\grad u_\phi^\star {\cdot} \nn\right ) \yy {\cdot} \grad \overline{u_\phi^\star}.
\end{align*}
It follows that
\begin{align*}
2 \Re \int_\Omega \phi \yy {\cdot} \grad \overline{u_\phi^\star}
={}&
2 \Re \int_\Omega \left (-k^2 u_\phi^\star - \Delta u_\phi^\star\right ) \yy {\cdot} \grad \overline{u_\phi^\star}
\\
={}&
dk^2\|u_\phi^\star\|_{0,\Omega}^2 - (d-2)|u_\phi^\star|_{1,\Omega}^2
\\
-&
k^2 \int_{\partial \Omega} |u_\phi^\star|^2 \yy {\cdot} \nn +
\int_{\partial \Omega} |\grad u_\phi^\star|^2 \yy {\cdot} \nn -
2 \Re \int_{\partial \Omega} \left (\grad u_\phi^\star {\cdot} \nn\right )
\yy {\cdot} \grad \overline{u_\phi^\star}.
\end{align*}
We now simplify the boundary term as
\begin{align*}
\mathscr B
\eq &
\int_{\partial \Omega} |\grad u_\phi^\star|^2 \yy {\cdot} \nn -
2 \Re \int_{\partial \Omega} (\grad u_\phi^\star {\cdot} \nn) \yy {\cdot} \grad \overline{u_\phi^\star}
\\
= &
\int_{\partial \Omega}|\grad_{\ttau} u_\phi^\star|^2 \yy {\cdot} \nn -
\int_{\partial \Omega}|\grad u_\phi^\star {\cdot} \nn|^2 \yy {\cdot} \nn -
2\Re \int_{\partial \Omega} (\grad u_\phi^\star {\cdot} \nn) \yy_{\ttau} {\cdot} \grad_{\ttau} \overline{u_\phi^\star}
\\
= &
\int_\GA |\grad_{\ttau} u_\phi^\star|^2 \yy {\cdot} \nn - \int_\GD |\grad u_\phi^\star {\cdot} \nn|^2 \yy {\cdot} \nn
-
\int_\GA |iku_\phi^\star|^2 \yy {\cdot} \nn + 2\Re
\left \{
\int_\GA iku_\phi^\star \yy_{\ttau} {\cdot} \grad_{\ttau} \overline{u_\phi^\star}
\right \}
\\
\geq  &
\int_\GA |\grad_{\ttau} u_\phi^\star|^2 \yy {\cdot} \nn
-
k^2 \int_\GA |u_\phi^\star|^2 \yy {\cdot} \nn +
2\Re \left \{
\int_\GA iku_\phi^\star \yy_{\ttau} {\cdot} \grad_{\ttau} \overline{u_\phi^\star}
\right \},
\end{align*}
where we used that $\grad_{\ttau} u_\phi^\star = \mathbf 0$ on $\GD$,
$\grad u_\phi^\star {\cdot} \nn = -iku_\phi^\star$ on $\GA$,
and $\yy {\cdot} \nn \leq 0$ on $\GD$. We now have
\begin{align*}
2 \Re \int_\Omega \phi \yy {\cdot} \grad \overline{u_\phi^\star}
\geq {}&
d k^2 \|u_\phi^\star\|_{0,\Omega}
+
\int_\GA |\grad_{\ttau} u_\phi^\star|^2 \yy {\cdot} \nn
-
(d-2)|u_\phi^\star|_{1,\Omega}^2
\\
&
- 2k^2 \int_\GA |u_\phi^\star|^2 \yy {\cdot} \nn +
2 \Re \int_\GA iku_\phi^\star \yy_{\ttau} {\cdot} \grad_{\ttau} \overline{u_\phi^\star},
\end{align*}
that we can rewrite as
\begin{align*}
& dk^2 \|u_\phi^\star\|_{0,\Omega}^2
+
\int_{\GA} |\grad_{\ttau} u_\phi^\star|^2 \yy {\cdot} \nn
\\
& \leq
2 \Re \int_\Omega \phi \yy {\cdot} \grad \overline{u_\phi^\star}
+ (d-2)|u_\phi^\star|_{1,\Omega}^2 + 2k^2 \int_\GA |u_\phi^\star|^2 \yy {\cdot} \nn +
2 \Re ik \left \{\int_\GA u_\phi^\star \yy_{\ttau} {\cdot} \grad_{\ttau} \overline{u_\phi^\star} \right \}
\\
& \leq
2\Re (\phi,\yy {\cdot} \grad u_\phi^\star)
+
(d-2)|u_\phi^\star|_{1,\Omega}^2
+
2k^2 \int_{\GA} |u_\phi^\star|^2 \yy {\cdot} \nn +
2k
\int_{\GA} |u_\phi^\star| |\grad_{\ttau} u_\phi^\star| |\yy_{\ttau}|.
\end{align*}
At this point, since $\yy {\cdot} \nn > 0$ on $\GA$, we use the inequality
\begin{align*}
2k |u_\phi^\star| |\grad_{\ttau} u_\phi^\star| |\yy_{\ttau}|
&=
2k |u_\phi^\star| |\grad_{\ttau} u_\phi^\star| |\yy \times \nn|
\\
&=
2\left (k\frac{|\yy \times \nn|}{\sqrt{\yy {\cdot} \nn}} |u_\phi^\star|\right )
\left (|\grad_{\ttau} u_\phi^\star|\sqrt{\yy {\cdot} \nn}\right )
\\
&\leq
k^2\frac{|\yy \times \nn |^2}{\yy {\cdot} \nn} |u_\phi^\star|^2 +
|\grad_{\ttau} u_\phi^\star|^2\yy {\cdot} \nn,
\end{align*}
and infer that
\begin{align*}
dk^2 \|u_\phi^\star\|_{0,\Omega}^2
&\leq
2 \Re (\phi,\yy {\cdot} \grad u_\phi^\star) + (d-2)|u_\phi^\star|_{1,\Omega}^2 +
k^2 \int_{\GA}
\left (
2\yy {\cdot} \nn + \frac{|\yy \times \nn|^2}{\yy {\cdot} \nn}
\right ) |u_\phi^\star|^2
\\
&\leq
2 \lambda(\xx_0)\|\phi\|_{0,\Omega} |u_\phi^\star|_{1,\Omega} + (d-2)|u_\phi^\star|_{1,\Omega}^2
+
k^2 \mu(\xx_0) \|u_\phi^\star\|_{0,\GA}^2
\\
&\leq
\lambda(\xx_0)^2 \|\phi\|_{0,\Omega}^2 + (d-1)|u_\phi^\star|_{1,\Omega}^2
+
k^2 \mu(\xx_0) \|u_\phi^\star\|_{0,\GA}^2,
\end{align*}
where, recalling that $\yy=\xx-\xx_0$, we have set
\begin{equation*}
\lambda(\xx_0) \eq \sup_{\xx \in \Omega} |\xx-\xx_0|, \quad
\mu(\xx_0) \eq \sup_{\xx \in \GA}
\left (
2 (\xx - \xx_0) {\cdot} \nn + \frac{|(\xx-\xx_0) \times \nn|^2}{(\xx-\xx_0) {\cdot} \nn}
\right ).
\end{equation*}
Moreover, we have
\begin{equation*}
|u_\phi^\star|_{1,\Omega}^2 = \Re b(u_\phi^\star,u_\phi^\star) + k^2\|u_\phi^\star\|_{0,\Omega}^2 \leq
\|\phi\|_{0,\Omega}\|u_\phi^\star\|_{0,\Omega} + k^2\|u_\phi^\star\|_{0,\Omega}^2
\end{equation*}
and
\begin{equation*}
k\|u_\phi^\star\|_{0,\GA}^2
=
-\Im b(u_\phi^\star,u_\phi^\star)
\leq
\|\phi\|_{0,\Omega}\|u_\phi^\star\|_{0,\Omega}.
\end{equation*}
It follows that
\begin{equation*}
k^2 \|u_\phi^\star\|_{0,\Omega}^2 \leq \lambda(\xx_0)^2 \|\phi\|_{0,\Omega}^2 +
((d-1)+k \mu(\xx_0))\|\phi\|_{0,\Omega} \|u_\phi^\star\|_{0,\Omega}.
\end{equation*}
Lemma~\ref{lemma_polynomial} with
$a = k^2$, $b = (d-1)+k\mu(\xx_0)\|\phi\|_{0,\Omega}$, and
$c = \lambda(\xx_0)^2 \|\phi\|_{0,\Omega}^2$ shows that
\begin{eqnarray*}
k^2 \|u_\phi^\star\|_{0,\Omega}
&\leq&
\left (
\frac{(d-1)+k\mu(\xx_0)}{2} +
\sqrt{\left (\frac{(d-1)+k\mu(\xx_0)}{2}\right )^2 + k^2\lambda(\xx_0)^2}
\right ) \|\phi\|_{0,\Omega}
\\
&\leq&
\left ((d-1) + k\left (\lambda(\xx_0) + \mu(\xx_0)\right )
\right ) \|\phi\|_{0,\Omega}.
\end{eqnarray*}
Then~\eqref{eq_stability_L2} follows since
$\Cstab = \inf_{\xx_0\in\mathcal{O}_{\GD,\GA}}(1/h_\Omega) (\lambda(\xx_0) + \mu(\xx_0))$.\qed
\end{proof}

\subsection{A bound on $\Cba$ for scattering by a non-trapping obstacle (Case 1a))}
\label{sec_cba_coarse}

We prove the bound~\eqref{eq_cba_coarse} from Theorem~\ref{theorem_cba}.
Observe first that
\begin{equation}
\label{tmp_cba_coarse}
\Cba = k\sup_{\phi \in L^2(\Omega) \setminus \{0\}}
\inf_{v_h \in V_h} \frac{|u_\phi^\star - v_h|_{1,\Omega}}{\|\phi\|_{0,\Omega}}
\leq
k\sup_{\phi \in L^2(\Omega) \setminus \{0\}} \frac{|u_\phi^\star|_{1,\Omega}}{\|\phi\|_{0,\Omega}}.
\end{equation}
Then, we consider an arbitrary $\phi \in L^2(\Omega)$ and the associated solution
$u_\phi^\star \in H^1_\GD(\Omega)$. Selecting the test function $v = u_\phi^\star$
in~\eqref{eq_helmholtz_weak_adjoint} and considering the real part of the equality, we see that
\begin{equation*}
|u_\phi^\star|_{1,\Omega}^2 = \Re (\phi,u_\phi^\star) + k^2\|u_\phi^\star\|_{0,\Omega}^2.
\end{equation*}
Hence, employing~\eqref{eq_stability_L2}, we have
\begin{eqnarray*}
k^2 |u_\phi^\star|_{1,\Omega}^2
&\leq&
\frac{k^2}{2 \varepsilon}\|\phi\|_{0,\Omega}^2 + \frac{\varepsilon}{2k^2}k^4 \|u_\phi^\star\|_{0,\Omega}^2
+
k^4 \|u_\phi^\star\|_{0,\Omega}^2
\\
&\leq&
\left (F(\varepsilon) + \left ((d-1) + \Cstab k h_\Omega\right )^2\right )
\|\phi\|_{0,\Omega}^2
\end{eqnarray*}
for all $\varepsilon > 0$ with
\begin{equation*}
F(\varepsilon)
=
\frac{k^2}{2\varepsilon} + \frac{\varepsilon}{2k^{2}}
\left ((d-1) + \Cstab k h_\Omega\right )^2.
\end{equation*}
Then, considering the equation $F'(\varepsilon) = 0$, we see that the minimum
of $F$ is achieved for
\begin{equation*}
\varepsilon_\star = \frac{k^2}{ (d-1) + \Cstab k h_\Omega },
\end{equation*}
and
\begin{eqnarray*}
F(\varepsilon_\star) = \left ((d-1) + \Cstab k h_\Omega \right ).
\end{eqnarray*}
We thus obtain that
\begin{equation*}
k^2 |u_\phi^\star|_{1,\Omega}^2
\leq
\left \{
\left ((d-1) + \Cstab k h_\Omega\right )
+
\left ((d-1) + \Cstabx k h_\Omega\right )^2
\right \}
\|\phi\|_{0,\Omega}^2,
\end{equation*}
for all $\phi \in L^2(\Omega)$, and~\eqref{eq_cba_coarse} follows
from~\eqref{tmp_cba_coarse}.

\subsection{A bound on $\Cba$ for wave propagation in free space (Case 1b))}
\label{sec_cba_fine}

We  prove here the estimate~\eqref{eq_cba_fine} of Theorem~\ref{theorem_cba}.
We thus consider the case where $\GD = \emptyset$ and $\Omega$
is convex. In this case (see~\cite{chaumontfrelet_nicaise_tomezyk_2018a}),
$u_\phi^\star \in H^2(\Omega)$.
Using~\eqref{eq_approx}, we observe that
\begin{equation} \begin{split}
\label{tmp_cba_fine}
\Cba = {} & k\sup_{\phi \in L^2(\Omega) \setminus \{0\}}
\inf_{v_h \in V_h} \frac{|u_\phi^{\star} - v_h|_{1,\Omega}}{\|\phi\|_{0,\Omega}}
\leq
k \Ci \frac{h}{p^\beta} \sup_{\phi \in L^2(\Omega) \setminus \{0\}}
\frac{|u_\phi^{\star}|_{2,\Omega}}{\|\phi\|_{0,\Omega}}.
\end{split} \end{equation}
We can view $u_\phi^\star$ as the unique solution to
\begin{equation*}
(\grad v,\grad u_\phi^\star) - ik(v,u_\phi^\star)_\GA = (v,\widetilde f)
\quad \forall v \in H^1_\GD(\Omega),
\end{equation*}
with $\widetilde f = \phi + k^2 u_\phi^\star$. Then, Theorem 3.2 of
\cite{chaumontfrelet_nicaise_tomezyk_2018a} states
that $u_\phi^\star \in H^2(\Omega)$ with
\begin{equation*}
|u_\phi^\star|_{2,\Omega} \leq \|\widetilde f\|_{0,\Omega}.
\end{equation*}
Then, \eqref{eq_stability_L2} shows that
\begin{equation*}
|u_\phi^\star|_{2,\Omega}
\leq
\left (d + \Cstab k h_\Omega \right ) \|\phi\|_{0,\Omega},
\end{equation*}
and~\eqref{eq_cba_fine} follows from~\eqref{tmp_cba_fine}.

\section{Numerical experiments}
\label{sec:numerical_experiments}

We present here two numerical experiments illustrating
Theorems~\ref{theorem_upper_bounds}, \ref{theorem_lower_bounds}, and~\ref{theorem_cba}.

\subsection{Plane wave in free space} \label{sec_PW}

We consider problem~\eqref{eq_helmholtz_strong} in the
square $\Omega = (-1,1)^2$ with $\GD = \emptyset$ and
$\GA = \partial \Omega$.
We fix the angle $\anglepw \eq \pi/3$, set $\dd \eq (\cos \anglepw,\sin \anglepw)$,
and define the plane wave $\xi_\anglepw(\xx) \eq e^{ik\dd {\cdot} \xx}$, $\xx \in \overline{\Omega}$.
We remark that $\xi_\anglepw$ is a homogeneous solution.
The problem is thus to find $u$ such that
\begin{equation}
\label{eq_helmholtz_plane_wave}
\left \{
\begin{array}{rcll}
-k^2 u - \Delta u &=& 0 & \text{ in } \Omega,
\\
\grad u {\cdot} \nn - iku &=& g & \text{ on } \partial \Omega,
\end{array}
\right .
\end{equation}
where $g = \grad \xi_\anglepw {\cdot} \nn - ik\xi_\anglepw$
on $\partial \Omega$. The unique solution is the plane wave $u = \xi_\anglepw$.

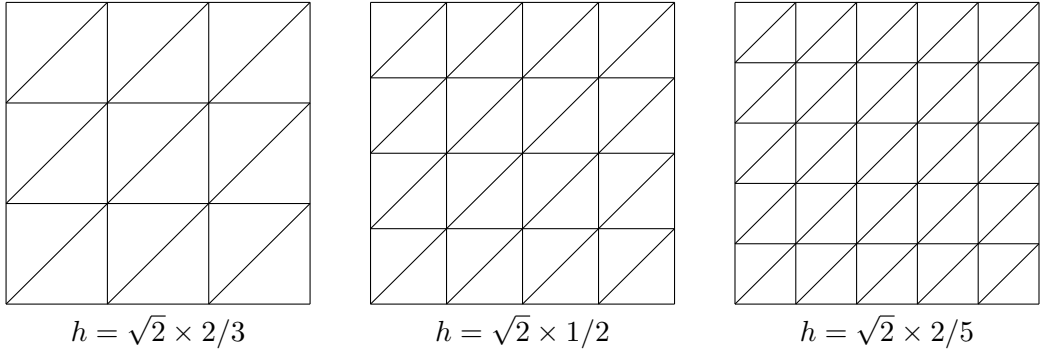
\begin{figure}
\begin{minipage}{.30\linewidth}
\begin{tikzpicture}[scale=4]
\draw (0,0/3) -- (1,0/3);
\draw (0,1/3) -- (1,1/3);
\draw (0,2/3) -- (1,2/3);
\draw (0,3/3) -- (1,3/3);

\draw (0/3,0) -- (0/3,1);
\draw (1/3,0) -- (1/3,1);
\draw (2/3,0) -- (2/3,1);
\draw (3/3,0) -- (3/3,1);

\draw (1/3,0) -- (1,2/3);
\draw (2/3,0) -- (1,1/3);

\draw (0,1/3) -- (2/3,1);
\draw (0,2/3) -- (1/3,1);

\draw (0,0) -- (1,1);

\draw (.5,0.) node[anchor=north] {$h = \sqrt 2 \times 2/3$};
\end{tikzpicture}
\end{minipage}
\begin{minipage}{.30\linewidth}
\begin{tikzpicture}[scale=4]
\draw (0,0/4) -- (1,0/4);
\draw (0,1/4) -- (1,1/4);
\draw (0,2/4) -- (1,2/4);
\draw (0,3/4) -- (1,3/4);
\draw (0,4/4) -- (1,4/4);

\draw (0/4,0) -- (0/4,1);
\draw (1/4,0) -- (1/4,1);
\draw (2/4,0) -- (2/4,1);
\draw (3/4,0) -- (3/4,1);
\draw (4/4,0) -- (4/4,1);

\draw (1/4,0) -- (1,3/4);
\draw (2/4,0) -- (1,2/4);
\draw (3/4,0) -- (1,1/4);

\draw (0,1/4) -- (3/4,1);
\draw (0,2/4) -- (2/4,1);
\draw (0,3/4) -- (1/4,1);

\draw (0,0) -- (1,1);

\draw (.5,0) node[anchor=north] {$h = \sqrt 2 \times 1/2$};
\end{tikzpicture}
\end{minipage}
\begin{minipage}{.30\linewidth}
\begin{tikzpicture}[scale=4]
\draw (0,0/5) -- (1,0/5);
\draw (0,1/5) -- (1,1/5);
\draw (0,2/5) -- (1,2/5);
\draw (0,3/5) -- (1,3/5);
\draw (0,4/5) -- (1,4/5);
\draw (0,5/5) -- (1,5/5);

\draw (0/5,0) -- (0/5,1);
\draw (1/5,0) -- (1/5,1);
\draw (2/5,0) -- (2/5,1);
\draw (3/5,0) -- (3/5,1);
\draw (4/5,0) -- (4/5,1);
\draw (5/5,0) -- (5/5,1);

\draw (1/5,0) -- (1,4/5);
\draw (2/5,0) -- (1,3/5);
\draw (3/5,0) -- (1,2/5);
\draw (4/5,0) -- (1,1/5);

\draw (0,1/5) -- (4/5,1);
\draw (0,2/5) -- (3/5,1);
\draw (0,3/5) -- (2/5,1);
\draw (0,4/5) -- (1/5,1);

\draw (0,0) -- (1,1);

\draw (.5,0) node[anchor=north] {$h = \sqrt 2 \times 2/5$};
\end{tikzpicture}
\end{minipage}
\caption{Cartesian meshes for the plane wave problem of Section~\ref{sec_PW}}
\label{figure_cartesian_meshes}
\end{figure}

We consider different values of the wavenumber $k$. In each case, we discretize problem
\eqref{eq_helmholtz_plane_wave} with meshes based on Cartesian grids
(see Figure~\ref{figure_cartesian_meshes}) with different sizes $h$.
The mesh sizes are selected so that the condition $\frac{kh}{2\pi p} \leq 1$ always holds true.
For all meshes and wavenumbers, we compute the relative estimators
(the factor $100$ allows one to read the relative errors as percentages)
\begin{equation*}
E_{\rm est} \eq
100 \cdot
\frac{\eta}{\norm{u}_{1,k,\Omega}}, \quad
\widetilde E_{\rm est} \eq c_{\rm up} E_{\rm est},
\end{equation*}
where, following~\eqref{eq_Cupperc} of Theorem~\ref{theorem_upper_bounds} and
\eqref{eq_cba_fine} of Theorem~\ref{theorem_cba} (here $d = 2$, $\beta=0$)
\begin{subequations}
\begin{align}
c_{\rm up} & \eq \sqrt{\left (\frac{1}{2} + \sqrt{\frac{1}{4} + (c_{\rm ba})^2}\right ) +
 \left (\frac{1}{2} + \sqrt{\frac{1}{4} +
(c_{\rm ba})^{2}}\right )^2 + \big(c_{\rm ba}\big)^2}, \label{eq_cup}\\
c_{\rm ba} & \eq c_{\rm i} \left (2 + c_{\rm stab} kh_\Omega \right ) k h \label{eq:def_numer_exp_cba}
\end{align}
\end{subequations}
with
\begin{equation*}
h_\Omega = 2\sqrt{2},
\quad
c_{\rm stab} \eq \frac{3+\sqrt{2}}{2\sqrt{2}},
\quad
c_{\rm i} \eq \frac{0.493}{\sqrt 2},
\end{equation*}
see Remarks~\ref{remark_stability} and~\ref{rem_Ci}. $\widetilde E_{\rm est}$ is the relative
percentage form of the guaranteed version of the upper
bound~\eqref{eq_est_up}, where $\Cupper$ is bounded from above by $c_{\rm up} = \sqrt{2}+\tilde\theta_1(c_{\rm ba})$; $E_{\rm est}$
is the relative percentage form of the constant-free equilibrated error estimator
$\eta$ given by~\eqref{eq_definition_eta}, without the prefactor
$\Cupper$ or $c_{\rm up}$. According to our theoretical results,
1) $E_{\rm est}$ and $\widetilde E_{\rm est}$ are $p$-robust; 2)
$\widetilde E_{\rm est}$ gives a guaranteed upper bound; and 3) $c_{\rm ba}$ as defined in~\eqref{eq:def_numer_exp_cba} tends to $0$, but
unfortunately only as $h \to 0$ and not as $p \to \infty$.
Furthermore, as the analytical solution is known, we also introduce
the relative percentage errors
\begin{equation*}
E_{\rm fem} \eq
100 \cdot \frac{\norm{u-u_h}_{1,k,\Omega}}{\norm{u}_{1,k,\Omega}}, \quad
E_{\rm ba} \eq
100 \cdot
\frac{\norm{u-P_h(u)}_{1,k,\Omega}}{\norm{u}_{1,k,\Omega}}, \quad
\end{equation*}
where the best approximation $P_h(u) \in V_h$ to $u$ is numerically computed by solving
\begin{equation*}
k^2 (P_h(u),v_h) + k(P_h(u),v_h)_\GA + (\grad (P_h(u)),\grad v_h) =
k^2 ( u,v_h) + k( u,v_h)_\GA + (\grad u,\grad v_h)
\end{equation*}
for all $v_h \in V_h$.
The behavior of $E_{\rm fem}$, $E_{\rm ba}$, $E_{\rm est}$, and $\widetilde E_{\rm est}$
for polynomial degrees $p=1$, $2$, and $4$ is respectively presented in Figures~\ref{figure_curve_p1_fullnorm}, \ref{figure_curve_p2_fullnorm}, and
\ref{figure_curve_p4_fullnorm}. In addition, Tables~\ref{table_efficiencies_p1},
\ref{table_efficiencies_p2}, and~\ref{table_efficiencies_p4} present
the effectivity indices $E_{\rm est}/E_{\rm fem}$ and
$\widetilde E_{\rm est}/E_{\rm fem}$ of the prefactor-free
and guaranteed relative estimators $E_{\rm est}$ and $\widetilde E_{\rm est}$,
respectively.


At fixed wavenumber $k$, the prefactor-free estimator $E_{\rm est}$ is
reliable and efficient for the error $E_{\rm fem}$ up to a constant independent
of the mesh size $h$ and polynomial degree $p$, so that the values of $E_{\rm est}$ follow those of $E_{\rm fem}$ up to effectivity indices independent of $h$ and $p$. For instance for the wavenumber $k = 20\pi$, where the results cover the unresolved and resolved regimes, see above, the effectivity indices of $E_{\rm est}$ range between $0.11$ and $1.00$.
Also, for fixed wavenumber $k$ and mesh size $h$, the effectivity index actually improves and approaches the optimal value of one for higher values of $p$:
for instance for $k=10\pi$ and $h=2\sqrt{2}/128$,
the effectivity indices respectively read $0.20$, $0.93$, and $1.00$ for $p=1$, $2$, and $4$.
Unfortunately, $E_{\rm est}$ can severely underestimate $E_{\rm fem}$. The
underestimation becomes more pronounced as the wavenumber $k$ gets higher,
which can be seen in Figures~\ref{figure_curve_p1_fullnorm}--\ref{figure_curve_p4_fullnorm}
and Tables~\ref{table_efficiencies_p1}--\ref{table_efficiencies_p4}, see in particular Table~\ref{table_efficiencies_p2}, where the effectivity
index for $p=2$ and $k=60\pi$ drops to $0.07$ on a rather refined mesh with $h=2\sqrt{2}/256$, falling into the resolved regime with $\frac{kh}{2 \pi p} \approx 0.17$. This can happen in the preasymptotic regime $\Cba > 1$, since the above reliability and efficiency of $E_{\rm est}$, though robust
with respect to $h$ and $p$, is not robust with respect to $k$. In accordance with the theory, though, the effectivity index $E_{\rm est}/E_{\rm fem}$ indeed approaches the optimal value of one in the asymptotic regime.

The relative estimator $\widetilde E_{\rm est}$ indeed gives a guaranteed upper bound on the
finite element error $E_{\rm fem}$ in all situations. Its effectivity index
can unfortunately reach very high values.
Although it decreases rather swiftly with mesh refinement for $\mathcal P_1$ elements,
see Table~\ref{table_efficiencies_p1}, we were not able to design the upper
bound~\eqref{eq_cba_fine} on $\Cba$ to be sharp for $p > 1$:
we only employ it with $\beta=0$ which means that $c_{\rm ba}$ does not decrease with
increasing polynomial degree $p$ (cf. the asymptotic behavior of $\Cba$
with respect to both $h$ and $p$ in~\eqref{eq:bnd_Cba_shift} where $\varepsilon =1$
can be taken here).
Consequently, the effectivity indices of $\widetilde E_{\rm est}$ are relatively poor for
higher-order elements in Tables~\ref{table_efficiencies_p2} and~\ref{table_efficiencies_p4},
and, moreover, only improve with decreasing the mesh size $h$ but not with increasing the
polynomial degree $p$. We also see from Table~\ref{table_efficiencies_p1} when $k=\pi$
that asymptotically, the effectivity index of the guaranteed estimator $\widetilde E_{\rm est}$
is close to $\sqrt 2 \simeq 1.41$, which is in agreement with~\eqref{eq_Cupperc}. Recall that
theoretically, this is remedied by the use of the constant $1+\tilde\theta_2(\Cba,\tCba)$
in~\eqref{eq_Cupperf}. In practice, however, we do not have a computable estimate on $\tCba$.

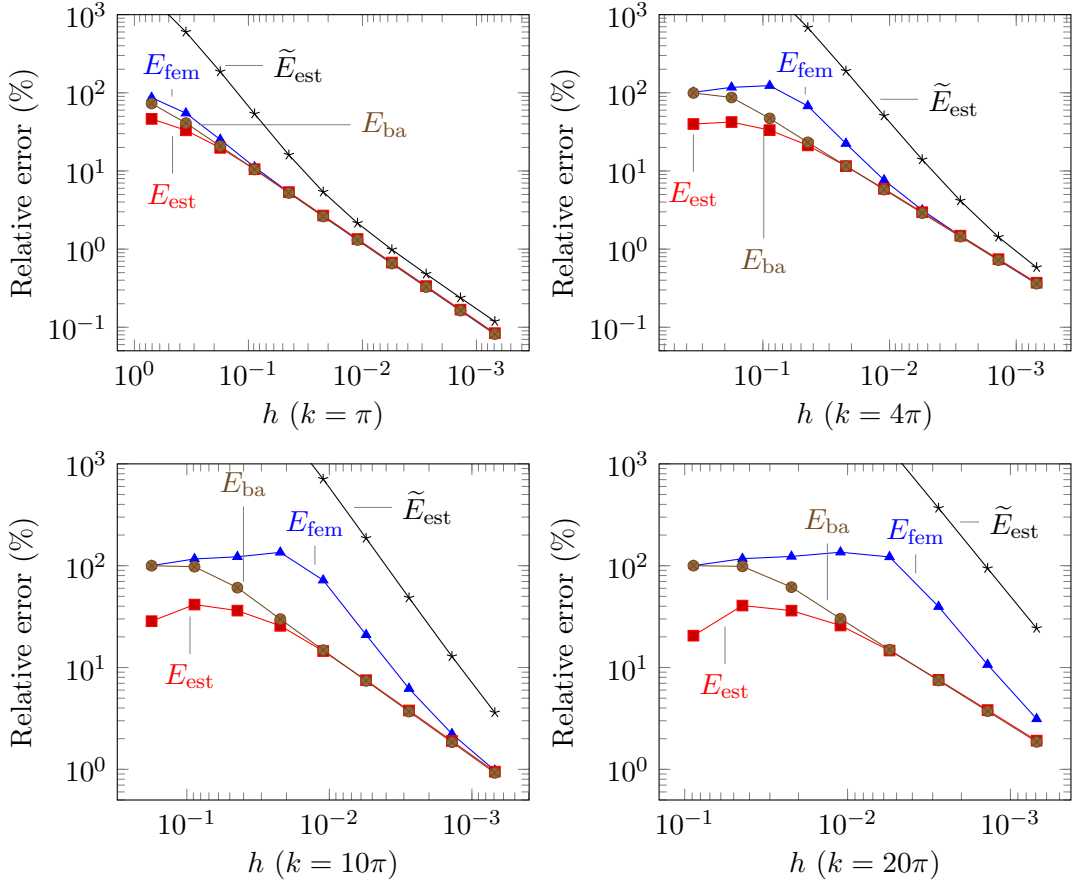
\begin{figure}
\begin{minipage}{.45\linewidth}
\begin{tikzpicture}
\begin{axis}
[
	width=\linewidth,
	xlabel={$h$ ($k = \pi$)},
	ylabel=Relative error (\%),
	xmode=log,
	ymode=log,
	x dir=reverse,
	ymin=5.e-2,
	ymax=1.e3
]

\plot[mark=triangle*,color=blue] table[x expr = sqrt(2)/\thisrow{n},y expr=100*\thisrow{analytical}/10.2024]%
{figures/fullnorm/plane_wave_p1/curve_0.5.dat}
node[pos=.05,pin={[pin distance=.1cm]90:{$E_{\rm fem}$}}] {};
\plot table[x expr = sqrt(2)/\thisrow{n},y expr=100*\thisrow{estimated}/10.2024]%
{figures/fullnorm/plane_wave_p1/curve_0.5.dat}
node[pos=.05,pin=-90:{$E_{\rm est}$}] {};
\plot table[x expr = sqrt(2)/\thisrow{n},y expr=100*\thisrow{best}/10.2024]%
{figures/fullnorm/plane_wave_p1/curve_0.5.dat}
node[pos=.1,pin={[pin distance=2cm]0:{$E_{\rm ba}$}}] {};
\plot table[x expr = sqrt(2)/\thisrow{n},y expr=100*\thisrow{corrected}/10.2024]%
{figures/fullnorm/plane_wave_p1/curve_0.5.dat}
node[pos=.2,pin=0:{$\widetilde E_{\rm est}$}] {};

\end{axis}
\end{tikzpicture}
\end{minipage}
\begin{minipage}{.45\linewidth}
\begin{tikzpicture}
\begin{axis}
[
	xlabel=$h$,
	xlabel={$h$ ($k = 4\pi$)},
	ylabel=Relative error (\%),
	xmode=log,
	ymode=log,
	x dir=reverse,
	ymin=5.e-2,
	ymax=1.e3,
	width=\linewidth
]
\plot[mark=triangle*,color=blue]
table[x expr = sqrt(2)/\thisrow{n},y expr=100*\thisrow{analytical}/36.9293]%
{figures/fullnorm/plane_wave_p1/curve_2.0.dat}
node[pos=.25,pin={[pin distance=.1cm]90:{$E_{\rm fem}$}}] {};
\plot table[x expr = sqrt(2)/\thisrow{n},y expr=100*\thisrow{estimated}/36.9293]%
{figures/fullnorm/plane_wave_p1/curve_2.0.dat}
node[pos=.0,pin=-90:{$E_{\rm est}$}] {};
\plot table[x expr = sqrt(2)/\thisrow{n},y expr=100*\thisrow{best}/36.9293]%
{figures/fullnorm/plane_wave_p1/curve_2.0.dat}
node[pos=.175,pin={[pin distance=1.5cm]-90:{$E_{\rm ba}$}}] {};
\plot table[x expr = sqrt(2)/\thisrow{n},y expr=100*\thisrow{corrected}/36.9293]%
{figures/fullnorm/plane_wave_p1/curve_2.0.dat}
node[pos=.5,pin=0:{$\widetilde E_{\rm est}$}] {};

\end{axis}
\end{tikzpicture}
\end{minipage}

\begin{minipage}{.45\linewidth}
\begin{tikzpicture}
\begin{axis}
[
	xlabel={$h$ ($k = 10\pi$)},
	ylabel=Relative error (\%),
	xmode=log,
	ymode=log,
	x dir=reverse,
	ymin=5.e-1,
	ymax=1.e3,
	width=\linewidth
]

\plot[mark=triangle*,color=blue]
table[x expr = sqrt(2)/\thisrow{n},y expr=100*\thisrow{analytical}/90.2464]%
{figures/fullnorm/plane_wave_p1/curve_5.0.dat}
node[pos=.35,pin={[pin distance=.25cm]90:{$E_{\rm fem}$}}] {};
\plot table[x expr = sqrt(2)/\thisrow{n},y expr=100*\thisrow{estimated}/90.2464]%
{figures/fullnorm/plane_wave_p1/curve_5.0.dat}
node[pos=.1,pin=-90:{$E_{\rm est}$}] {};
\plot table[x expr = sqrt(2)/\thisrow{n},y expr=100*\thisrow{best}/90.2464]%
{figures/fullnorm/plane_wave_p1/curve_5.0.dat}
node[pos=.225,pin={[pin distance=1cm]90:{$E_{\rm ba}$}}] {};
\plot table[x expr = sqrt(2)/\thisrow{n},y expr=100*\thisrow{corrected}/90.2464]%
{figures/fullnorm/plane_wave_p1/curve_5.0.dat}
node[pos=.5,pin=0:{$\widetilde E_{\rm est}$}] {};

\end{axis}
\end{tikzpicture}
\end{minipage}
\begin{minipage}{.45\linewidth}
\begin{tikzpicture}
\begin{axis}
[
	width=\linewidth,
	xlabel={$h$ ($k = 20\pi$)},
	ylabel=Relative error (\%),
	xmode=log,
	ymode=log,
	x dir=reverse,
	ymin=5.e-1,
	ymax=1.e3
]

\plot[mark=triangle*,color=blue]
table[x expr = sqrt(2)/\thisrow{n},y expr=100*\thisrow{analytical}/179.006]%
{figures/fullnorm/plane_wave_p1/curve_10.0.dat}
node[pos=.5,pin={[pin distance=.25cm]90:{$E_{\rm fem}$}}] {};
\plot table[x expr = sqrt(2)/\thisrow{n},y expr=100*\thisrow{estimated}/179.006]%
{figures/fullnorm/plane_wave_p1/curve_10.0.dat}
node[pos=.1,pin=-90:{$E_{\rm est}$}] {};
\plot table[x expr = sqrt(2)/\thisrow{n},y expr=100*\thisrow{best}/179.006]%
{figures/fullnorm/plane_wave_p1/curve_10.0.dat}
node[pos=.35,pin={[pin distance=.75cm]90:{$E_{\rm ba}$}}] {};
\plot table[x expr = sqrt(2)/\thisrow{n},y expr=100*\thisrow{corrected}/179.006]%
{figures/fullnorm/plane_wave_p1/curve_10.0.dat}
node[pos=.7,pin={[pin distance=.25cm]0:{$\widetilde E_{\rm est}$}}] {};

\end{axis}
\end{tikzpicture}
\end{minipage}

\caption{Behaviors of the estimated and analytical errors for
the plane wave test case of Section~\ref{sec_PW} with $\mathcal P_1$ elements}
\label{figure_curve_p1_fullnorm}
\end{figure}

\begin{table}
\begin{tabular}{|l|rr|rr|rr|rr|}
\hline
& \multicolumn{2}{c|}{$k = \pi$} & \multicolumn{2}{c|}{$k = 4\pi$} & \multicolumn{2}{c|}{$k = 10\pi$} & \multicolumn{2}{c|}{$k = 20\pi$}\\
\hline
\hspace{.25cm}$h$      & $E_{\rm est}$ & $\widetilde E_{\rm est}$ & $E_{\rm est}$ & $\widetilde E_{\rm est}$ & $E_{\rm est}$ & $\widetilde E_{\rm est}$ & $E_{\rm est}$ & $\widetilde E_{\rm est}$ \\
\hline
$2\sqrt{2}/8   $ &    0.78 &     7.38  &   0.36 &    45.55 &    0.28 &   219.62 &   0.16 &   502.52 \\
$2\sqrt{2}/16  $ &    0.94 &     4.80  &   0.27 &    17.18 &    0.36 &   137.48 &   0.21 &   314.17 \\
$2\sqrt{2}/32  $ &    1.01 &     3.01  &   0.31 &    10.07 &    0.30 &    57.24 &   0.35 &   265.17 \\
$2\sqrt{2}/64  $ &    1.02 &     2.05  &   0.52 &     8.48 &    0.19 &    18.42 &   0.29 &   112.57 \\
$2\sqrt{2}/128 $ &    1.03 &     1.64  &   0.77 &     6.65 &    0.20 &     9.84 &   0.19 &    36.61 \\
$2\sqrt{2}/256 $ &    1.03 &     1.51  &   0.94 &     4.43 &    0.36 &     8.89 &   0.12 &    11.60 \\
$2\sqrt{2}/512 $ &    1.03 &     1.47  &   1.01 &     2.81 &    0.61 &     7.79 &   0.19 &     9.27 \\
$2\sqrt{2}/1024$ &    1.03 &     1.46  &   1.02 &     1.96 &    0.85 &     5.76 &   0.36 &     8.81 \\
$2\sqrt{2}/2048$ &    1.03 &     1.46  &   1.03 &     1.61 &    0.97 &     3.70 &   0.61 &     7.76 \\
\hline
\end{tabular}
\medskip
\caption{Effectivity indices of the asymptotical and guaranteed error estimators in the
plane wave test case of Section~\ref{sec_PW} with $\mathcal P_1$ elements}
\label{table_efficiencies_p1}
\end{table}

\begin{figure}
\begin{minipage}{.45\linewidth}
\begin{tikzpicture}
\begin{axis}
[
	width=\linewidth,
	xlabel={$h$ ($k = 10\pi$)},
	ylabel=Relative error (\%),
	xmode=log,
	ymode=log,
	x dir=reverse,
	ymin=5.e-3,
	ymax=1.e3
]

\plot[mark=triangle*,color=blue]
table[x expr = 1./\thisrow{n},y expr=100*\thisrow{analytical}/90.2608]%
{figures/fullnorm/plane_wave_p2/curve_5.0.dat}
node[pos=.05,pin={[pin distance=.1cm]90:{$E_{\rm fem}$}}] {};
\plot table[x expr = 1./\thisrow{n},y expr=100*\thisrow{estimated}/90.2608]%
{figures/fullnorm/plane_wave_p2/curve_5.0.dat}
node[pos=.05,pin=-90:{$E_{\rm est}$}] {};
\plot table[x expr = 1./\thisrow{n},y expr=100*\thisrow{best}/90.2608]%
{figures/fullnorm/plane_wave_p2/curve_5.0.dat}
node[pos=.1,pin={[pin distance=1.0cm]0:{$E_{\rm ba}$}}] {};
\plot table[x expr = 1./\thisrow{n},y expr=100*\thisrow{corrected}/90.2608]%
{figures/fullnorm/plane_wave_p2/curve_5.0.dat}
node[pos=.5,pin=0:{$\widetilde E_{\rm est}$}] {};
\end{axis}
\end{tikzpicture}
\end{minipage}
\begin{minipage}{.45\linewidth}
\begin{tikzpicture}
\begin{axis}
[
	xlabel=$h$,
	xlabel={$h$ ($k = 20\pi$)},
	ylabel=Relative error (\%),
	xmode=log,
	ymode=log,
	x dir=reverse,
	ymin=5.e-3,
	ymax=1.e3,
	width=\linewidth
]

\plot[mark=triangle*,color=blue]
table[x expr = 1./\thisrow{n},y expr=100*\thisrow{analytical}/179.124]%
{figures/fullnorm/plane_wave_p2/curve_10.0.dat}
node[pos=.25,pin={[pin distance=.1cm]90:{$E_{\rm fem}$}}] {};
\plot table[x expr = 1./\thisrow{n},y expr=100*\thisrow{estimated}/179.124]%
{figures/fullnorm/plane_wave_p2/curve_10.0.dat}
node[pos=.1,pin=-90:{$E_{\rm est}$}] {};
\plot table[x expr = 1./\thisrow{n},y expr=100*\thisrow{best}/179.124]%
{figures/fullnorm/plane_wave_p2/curve_10.0.dat}
node[pos=.2,pin={[pin distance=1cm]0:{$E_{\rm ba}$}}] {};
\plot table[x expr = 1./\thisrow{n},y expr=100*\thisrow{corrected}/179.124]%
{figures/fullnorm/plane_wave_p2/curve_10.0.dat}
node[pos=.7,pin=0:{$\widetilde E_{\rm est}$}] {};
\end{axis}
\end{tikzpicture}
\end{minipage}

\begin{minipage}{.45\linewidth}
\begin{tikzpicture}
\begin{axis}
[
	xlabel={$h$ ($k = 40\pi$)},
	ylabel=Relative error (\%),
	xmode=log,
	ymode=log,
	x dir=reverse,
	ymin=2.e-1,
	ymax=2.e3,
	width=\linewidth
]

\plot[mark=triangle*,color=blue]
table[x expr = 1./\thisrow{n},y expr=100*\thisrow{analytical}/356.835]%
{figures/fullnorm/plane_wave_p2/curve_20.0.dat}
node[pos=.2,pin=90:{$E_{\rm fem}$}] {};
\plot table[x expr = 1./\thisrow{n},y expr=100*\thisrow{estimated}/356.835]%
{figures/fullnorm/plane_wave_p2/curve_20.0.dat}
node[pos=.2,pin=-90:{$E_{\rm est}$}] {};
\plot table[x expr = 1./\thisrow{n},y expr=100*\thisrow{best}/356.835]%
{figures/fullnorm/plane_wave_p2/curve_20.0.dat}
node[pos=.275,pin={[pin distance=1.5cm]0:{$E_{\rm ba}$}}] {};
\plot table[x expr = 1./\thisrow{n},y expr=100*\thisrow{corrected}/356.835]%
{figures/fullnorm/plane_wave_p2/curve_20.0.dat}
node[pos=.7,pin={[pin distance=.25cm]0:{$\widetilde E_{\rm est}$}}] {};
\end{axis}
\end{tikzpicture}
\end{minipage}
\begin{minipage}{.45\linewidth}
\begin{tikzpicture}
\begin{axis}
[
	width=\linewidth,
	xlabel={$h$ ($k = 60\pi$)},
	ylabel=Relative error (\%),
	xmode=log,
	ymode=log,
	x dir=reverse,
	ymin=2.e-1,
	ymax=2.e3
]

\plot[mark=triangle*,color=blue]
table[x expr = 1./\thisrow{n},y expr=100*\thisrow{analytical}/534.502]%
{figures/fullnorm/plane_wave_p2/curve_30.0.dat}
node[pos=.7,pin=75:{$E_{\rm fem}$}] {};
\plot table[x expr = 1./\thisrow{n},y expr=100*\thisrow{estimated}/534.502]%
{figures/fullnorm/plane_wave_p2/curve_30.0.dat}
node[pos=.2,pin=-90:{$E_{\rm est}$}] {};
\plot table[x expr = 1./\thisrow{n},y expr=100*\thisrow{best}/534.502]%
{figures/fullnorm/plane_wave_p2/curve_30.0.dat}
node[pos=.35,pin={[pin distance=1.5cm]22.5:{$E_{\rm ba}$}}] {};
\plot table[x expr = 1./\thisrow{n},y expr=100*\thisrow{corrected}/534.502]%
{figures/fullnorm/plane_wave_p2/curve_30.0.dat}
node[pos=.8,pin=180:{$\widetilde E_{\rm est}$}] {};
\end{axis}
\end{tikzpicture}
\end{minipage}

\caption{Behaviors of the estimated and analytical errors for the
plane wave test case of Section~\ref{sec_PW} with $\mathcal P_2$ elements}
\label{figure_curve_p2_fullnorm}
\end{figure}

\begin{table}
\begin{tabular}{|l|rr|rr|rr|rr|}
\hline
& \multicolumn{2}{c|}{$k = 10\pi$} & \multicolumn{2}{c|}{$k = 20\pi$} & \multicolumn{2}{c|}{$k = 40\pi$} & \multicolumn{2}{c|}{$k = 60\pi$}\\
\hline
\hspace{.25cm}$h$      & $E_{\rm est}$ & $\widetilde E_{\rm est}$ & $E_{\rm est}$ & $\widetilde E_{\rm est}$ & $E_{\rm est}$ & $\widetilde E_{\rm est}$ & $E_{\rm est}$ & $\widetilde E_{\rm est}$ \\
\hline
$2\sqrt{2}/32  $ &     0.19 &    46.58 &     0.22 &   213.82 &     0.46 &  1753.21 &     0.12 & 1014.61 \\
$2\sqrt{2}/64  $ &     0.55 &    66.32 &     0.11 &    54.52 &     0.22 &   424.74 &     0.15 &  658.57 \\
$2\sqrt{2}/128 $ &     0.93 &    56.13 &     0.32 &    75.82 &     0.10 &    91.05 &     0.17 &  359.92 \\
$2\sqrt{2}/256 $ &     1.00 &    30.36 &     0.79 &    94.49 &     0.17 &    79.18 &     0.07 &   70.88 \\
$2\sqrt{2}/512 $ &     1.00 &    15.57 &     0.98 &    58.91 &     0.55 &   129.81 &     0.19 &  102.77 \\
$2\sqrt{2}/1024$ &     1.00 &     8.12 &     1.00 &    30.30 &     0.93 &   111.08 &     0.61 &  162.98 \\
\hline
\end{tabular}
\medskip
\caption{Effectivity indices of the asymptotical and guaranteed error estimators
in the plane wave test case of Section~\ref{sec_PW} with $\mathcal P_2$ elements}
\label{table_efficiencies_p2}
\end{table}

\begin{figure}
\begin{minipage}{.45\linewidth}
\begin{tikzpicture}
\begin{axis}
[
	width=\linewidth,
	xlabel={$h$ ($k = 10\pi$)},
	ylabel=Relative error (\%),
	xmode=log,
	ymode=log,
	x dir=reverse,
	ymin=1.e-6,
	ymax=1.e2
]

\plot[mark=triangle*,color=blue]
table[x expr = 1./\thisrow{n},y expr=100*\thisrow{analytical}/90.2608]%
{figures/fullnorm/plane_wave_p4/curve_5.0.dat}
node[pos=.05,pin={[pin distance=.1cm]90:{$E_{\rm fem}$}}] {};
\plot table[x expr = 1./\thisrow{n},y expr=100*\thisrow{estimated}/90.2608]%
{figures/fullnorm/plane_wave_p4/curve_5.0.dat}
node[pos=.05,pin=-90:{$E_{\rm est}$}] {};
\plot table[x expr = 1./\thisrow{n},y expr=100*\thisrow{best}/90.2608]%
{figures/fullnorm/plane_wave_p4/curve_5.0.dat}
node[pos=.1,pin={[pin distance=0.5cm]0:{$E_{\rm ba}$}}] {};
\plot table[x expr = 1./\thisrow{n},y expr=100*\thisrow{corrected}/90.2608]%
{figures/fullnorm/plane_wave_p4/curve_5.0.dat}
node[pos=.5,pin=0:{$\widetilde E_{\rm est}$}] {};
\end{axis}
\end{tikzpicture}
\end{minipage}
\begin{minipage}{.45\linewidth}
\begin{tikzpicture}
\begin{axis}
[
	xlabel=$h$,
	xlabel={$h$ ($k = 20\pi$)},
	ylabel=Relative error (\%),
	xmode=log,
	ymode=log,
	x dir=reverse,
	ymin=1.e-6,
	ymax=1.e2,
	width=\linewidth
]

\plot[mark=triangle*,color=blue]
table[x expr = 1./\thisrow{n},y expr=100*\thisrow{analytical}/179.124]%
{figures/fullnorm/plane_wave_p4/curve_10.0.dat}
node[pos=0.,pin=0:{$E_{\rm fem}$}] {};
\plot table[x expr = 1./\thisrow{n},y expr=100*\thisrow{estimated}/179.124]%
{figures/fullnorm/plane_wave_p4/curve_10.0.dat}
node[pos=.1,pin=-90:{$E_{\rm est}$}] {};
\plot table[x expr = 1./\thisrow{n},y expr=100*\thisrow{best}/179.124]%
{figures/fullnorm/plane_wave_p4/curve_10.0.dat}
node[pos=.2,pin={[pin distance=1cm]0:{$E_{\rm ba}$}}] {};
\plot table[x expr = 1./\thisrow{n},y expr=100*\thisrow{corrected}/179.124]%
{figures/fullnorm/plane_wave_p4/curve_10.0.dat}
node[pos=.6,pin=0:{$\widetilde E_{\rm est}$}] {};
\end{axis}
\end{tikzpicture}
\end{minipage}

\begin{minipage}{.45\linewidth}
\begin{tikzpicture}
\begin{axis}
[
	xlabel={$h$ ($k = 40\pi$)},
	ylabel=Relative error (\%),
	xmode=log,
	ymode=log,
	x dir=reverse,
	ymin=1.e-3,
	ymax=3.e3,
	width=\linewidth
]

\plot[mark=triangle*,color=blue]
table[x expr = 1./\thisrow{n},y expr=100*\thisrow{analytical}/356.842]%
{figures/fullnorm/plane_wave_p4/curve_20.0.dat}
node[pos=.15,pin={[pin distance=.1]45:{$E_{\rm fem}$}}] {};
\plot table[x expr = 1./\thisrow{n},y expr=100*\thisrow{estimated}/356.842]%
{figures/fullnorm/plane_wave_p4/curve_20.0.dat}
node[pos=.0,pin=-90:{$E_{\rm est}$}] {};
\plot table[x expr = 1./\thisrow{n},y expr=100*\thisrow{best}/356.842]%
{figures/fullnorm/plane_wave_p4/curve_20.0.dat}
node[pos=.1,pin={[pin distance=1.5cm]0:{$E_{\rm ba}$}}] {};
\plot table[x expr = 1./\thisrow{n},y expr=100*\thisrow{corrected}/356.842]%
{figures/fullnorm/plane_wave_p4/curve_20.0.dat}
node[pos=.6,pin=0:{$\widetilde E_{\rm est}$}] {};
\end{axis}
\end{tikzpicture}
\end{minipage}
\begin{minipage}{.45\linewidth}
\begin{tikzpicture}
\begin{axis}
[
	width=\linewidth,
	xlabel={$h$ ($k = 60\pi$)},
	ylabel=Relative error (\%),
	xmode=log,
	ymode=log,
	x dir=reverse,
	ymin=1.e-3,
	ymax=3.e3
]

\plot[mark=triangle*,color=blue]
table[x expr = 1./\thisrow{n},y expr=100*\thisrow{analytical}/534.558]%
{figures/fullnorm/plane_wave_p4/curve_30.0.dat}
node[pos=.2,pin={[pin distance=.1cm]75:{$E_{\rm fem}$}}] {};
\plot table[x expr = 1./\thisrow{n},y expr=100*\thisrow{estimated}/534.558]%
{figures/fullnorm/plane_wave_p4/curve_30.0.dat}
node[pos=.15,pin=-90:{$E_{\rm est}$}] {};
\plot table[x expr = 1./\thisrow{n},y expr=100*\thisrow{best}/534.558]%
{figures/fullnorm/plane_wave_p4/curve_30.0.dat}
node[pos=.15,pin={[pin distance=1.5cm]0:{$E_{\rm ba}$}}] {};
\plot table[x expr = 1./\thisrow{n},y expr=100*\thisrow{corrected}/356.842]%
{figures/fullnorm/plane_wave_p4/curve_30.0.dat}
node[pos=.6,pin=0:{$\widetilde E_{\rm est}$}] {};
\end{axis}
\end{tikzpicture}
\end{minipage}

\caption{Behaviors of the estimated and analytical errors for the plane
wave test case of Section~\ref{sec_PW} with $\mathcal P_4$ elements}
\label{figure_curve_p4_fullnorm}
\end{figure}

\begin{table}
\begin{tabular}{|l|rr|rr|rr|rr|}
\hline
& \multicolumn{2}{c|}{$k = 10\pi$} & \multicolumn{2}{c|}{$k = 20\pi$} & \multicolumn{2}{c|}{$k = 40\pi$} & \multicolumn{2}{c|}{$k = 60\pi$}\\
\hline
\hspace{.25cm} $h$      & $E_{\rm est}$ & $\widetilde E_{\rm est}$ & $E_{\rm est}$ & $\widetilde E_{\rm est}$ & $E_{\rm est}$ & $\widetilde E_{\rm est}$ & $E_{\rm est}$ & $\widetilde E_{\rm est}$ \\
\hline
$2\sqrt{2}/32  $ &     0.95 &   227.91 &     0.24 &   224.79 &     0.10 &   376.03 &     0.30 &  2548.49 \\
$2\sqrt{2}/64  $ &     0.99 &   119.14 &     0.92 &   438.82 &     0.12 &   234.70 &     0.11 &   451.99 \\
$2\sqrt{2}/128 $ &     1.00 &    60.35 &     0.99 &   236.00 &     0.83 &   787.63 &     0.23 &   486.20 \\
$2\sqrt{2}/256 $ &     1.00 &    30.54 &     1.00 &   119.27 &     0.99 &   469.50 &     0.94 &  1000.73 \\
$2\sqrt{2}/512 $ &     1.00 &    15.58 &     1.00 &    60.06 &     1.00 &   237.14 &     1.00 &   530.32 \\
\hline
\end{tabular}
\medskip
\caption{Effectivity indices of the asymptotical and guaranteed error estimators in the
plane wave test case of Section~\ref{sec_PW} with $\mathcal P_4$ elements}
\label{table_efficiencies_p4}
\end{table}

\subsection{Scattering by a non-trapping obstacle} \label{sec_NTO}

We now consider the scattering of a plane wave by an obstacle. This problem consists in finding $u$ such that
\begin{equation}
\label{eq_helmholtz_scattering}
\left \{
\begin{array}{rcll}
-k^2 u - \Delta u &=& 0 & \text{ in } \Omega,
\\
u &=& 0 & \text{ on } \GD,
\\
\grad u {\cdot} \nn - iku &=& g & \text{ on } \GA,
\end{array}
\right .
\end{equation}
where again $g = \grad \xi_\anglepw {\cdot} \nn -ik\xi_\anglepw$ with $\xi_\anglepw$ given in Section~\ref{sec_PW}.
The computational domain is constructed such that $\Omega = \Omega_0 \setminus \overline{D}$,
$\GD = \partial D$, and $\GA = \partial \Omega_0$, where $\Omega_0 = (-1,1)^2$ and
\begin{equation*}
D = \left \{
\xx \in \Omega \; \left | \; 2|\xx_1| - \frac{1}{2} < \xx_2 < |\xx_1|
\right .
\right \},
\end{equation*}
see the left panel of Figure~\ref{figure_scattering_settings}.
We see that we have
$\xx \cdot \nn \leq 0$ on $\GD$ and $\xx \cdot \nn > 0$ on
$\GA$, so that this setting enters Case 1a) of Theorem
\ref{theorem_cba} with $\xx_0 = (0,0)$.

We select the wavenumbers $k = 2\pi$ and $10\pi$ and employ polynomials
of degree $p=1$, $2$, and $3$. As the analytical solution of the problem
is not available, we employ an accurate numerical solution
as a reference. Specifically, for each mesh $\TT_h$, we employ
the approximation $u \simeq \widetilde u_h$ for the comparison, where $\widetilde u_h$
is computed using the mesh $\TT_h$ with $\mathcal P_6$ finite elements.

In order to generate unstructured adaptive meshes, we consider a simple procedure based on the software platform {\tt mmg}~\cite{mmg3d}.
We start with the initial mesh depicted in the right panel of Figure~\ref{figure_scattering_settings}. The software {\tt mmg}
allows one to impose a map of maximal allowed mesh sizes (or metric). This map is specified
by defining values on the vertices of a previously introduced mesh.
Thus, at each iteration, after solving the problem on the mesh $\TT_h$
and computing the corresponding estimators $\eta_K$, we produce a metric to
generate the mesh of the next iteration. We start by defining a new maximal
mesh size $h^\star_K$ for each element. This is done by sorting the elements
in decreasing values of $\eta_K$, setting $h^\star_K \eq h_K/2$ for the $|\TT_h|/10$
first elements, and defining $h^\star_K \eq 1.1 h_K$ for the last $|\TT_h|/10$ elements
(we set $h^\star_K = h_K$ for the remaining elements).
Then, the maximal mesh size value at the vertex $\aaa \in \VV_h$ is specified as
$h_\aaa^\star \eq \min_{K \in \TT_{\aaa}} h_K^\star$. Examples of meshes produced by this algorithm can be seen in Figure~\ref{figure_scattering_error}.

Figures~\ref{figure_scattering_curve_1} and~\ref{figure_scattering_curve_5} represent the relative
percentage analytical and estimated errors
\begin{equation*}
E_{\rm fem} \eq 100 \cdot \frac{\norm{\widetilde u_h-u_h}_{1,k,\Omega}}{\norm{\widetilde u_h}_{1,k,\Omega}},
\quad
E_{\rm est} \eq 100 \cdot \frac{\eta}{\norm{\widetilde u_h}_{1,k,\Omega}}
\end{equation*}
in the different stages of the adaptive procedure for both $k = 2\pi$ and $k = 10\pi$ and for all $\mathcal P_1$, $\mathcal P_2$, and $\mathcal P_3$ elements. All the observations made
in the example of Section~\ref{sec_PW} still hold true here, even though the meshes are now unstructured
and feature elements of significantly different sizes. The relative estimators $E_{\rm est}$ are in particular a much better match to the approximate relative errors $E_{\rm fem}$ for $\mathcal P_3$ elements than for $\mathcal P_1$ and $\mathcal P_2$ elements in the $k = 10\pi$ case.

Following~\eqref{eq_Cupperc} in Theorem~\ref{theorem_upper_bounds}, Case 1a) of Theorem~\ref{theorem_cba}, and Remark~\ref{remark_stability},
we can also define
\begin{equation*}
\widetilde E_{\rm est} \eq c_{\rm up} E_{\rm est}
\end{equation*}
with $c_{\rm up}$ given by~\eqref{eq_cup} and
\begin{equation*}
c_{\rm ba} \eq \left (
\left (1 + c_{\rm stab} kh_\Omega\right ) +
\left (1 + c_{\rm stab} kh_\Omega\right )^2
\right )^{\frac12},
\quad
c_{\rm stab} \eq \frac{3 + \sqrt{2}}{2\sqrt{2}},
\quad
h_\Omega = 2 \sqrt 2,
\end{equation*}
where we have employed the point $\xx_0 = \mathbf 0$ in Remark~\ref{remark_stability}.
We remark that here $c_{\rm up}$ only depends on $k$
and improves neither with the mesh size $h$ nor with the polynomial degree $p$. We compute
$c_{\rm up} = 42.05$ for $k = 2\pi$, and $c_{\rm up} = 198.94$ for $k = 10\pi$.
We observe that with this definition, $\widetilde E_{\rm est}$ indeed constitutes
a guaranteed upper bound for all meshes, wavenumbers, and polynomial degrees
considered in this example. However, as the overestimation factor for $E_{\rm est}$
is about $1$ asymptotically, the overestimation factor for $\widetilde E_{\rm est}$
will be about 40 and 200 for $k = 2\pi$ and $k = 10\pi$, respectively, which
might unfortunately be too large for being useful in applications.

Finally, Figure~\ref{figure_scattering_error} depicts the local estimators
$\eta_K$ of~\eqref{eq_definition_eta} compared to the actual approximate
errors $e_K = \norm{\widetilde u_h-u_h}_{1,k,K}$,
evaluated using the reference numerical solution $\widetilde u_h$
on a sequence of adapted meshes for $k=10\pi$ and $\mathcal P_3$ elements.
We see that the estimators $\eta_K$ provide a very good representation of the error distribution,
even if the wavenumber is relatively high and the mesh is unstructured,
with a significant ratio between the largest and the smallest element sizes.
This result illustrates the local efficiency of the proposed estimator
as stated in~\eqref{eq_est_low_loc}.
The solution corresponding to the finest mesh is depicted in
Figure~\ref{figure_scattering_solution}.

\begin{figure}
\begin{minipage}{.45\linewidth}
\begin{tikzpicture}[scale=3]
\draw[dashed] (-1,-1) -- (-1,1) -- (1,1) -- (1,-1) -- cycle;

\draw (0,-.5) -- (.5,.5) -- (0,0) -- (-.5,.5) -- cycle;

\draw (.25,0.) node[anchor=west] {$\GD$};

\draw (1,-1) node[anchor=south east] {$\GA$};
\draw (-.8,-1) node[anchor=south east] {$\Omega$};
\end{tikzpicture}
\end{minipage}
\begin{minipage}{.45\linewidth}
\input{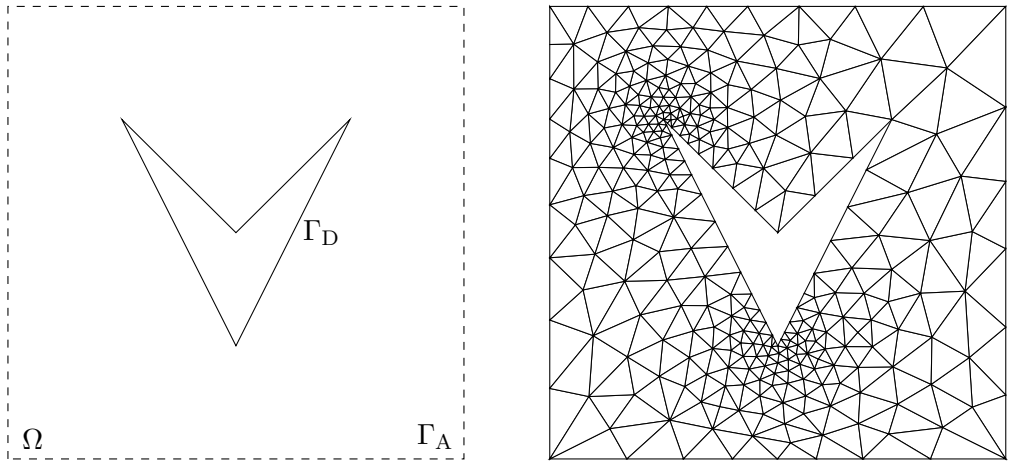}
\end{minipage}
\caption{Scattering problem of Section~\ref{sec_NTO}: domain settings (left) and the initial mesh (right)}
\label{figure_scattering_settings}
\end{figure}

\begin{figure}
\begin{minipage}{.45\linewidth}
\begin{tikzpicture}
\begin{axis}
[
	width=\linewidth,
	xlabel={Iterations ($\PP_1$)},
	ylabel=Relative error (\%),
	ymode=log
]

\plot table[x expr = \thisrow{n},y expr=100*\thisrow{analytical}/85.3087]%
{figures/scattering/data/1/errors_p1.dat}
node[pos=0,pin=0:{$E_{\rm fem}$}] {};
\plot table[x expr = \thisrow{n},y expr=100*\thisrow{estimated}/85.3087]%
{figures/scattering/data/1/errors_p1.dat}
node[pos=.05,pin=-90:{$E_{\rm est}$}] {};
\end{axis}
\end{tikzpicture}
\end{minipage}
\begin{minipage}{.45\linewidth}
\begin{tikzpicture}
\begin{axis}
[
	width=\linewidth,
	xlabel={Iterations ($\PP_2$)},
	ylabel=Relative error (\%),
	ymode=log
]

\plot table[x expr = \thisrow{n},y expr=100*\thisrow{analytical}/85.3087]%
{figures/scattering/data/1/errors_p2.dat}
node[pos=.65,pin=-90:{$E_{\rm fem}$}] {};
\plot table[x expr = \thisrow{n},y expr=100*\thisrow{estimated}/85.3087]%
{figures/scattering/data/1/errors_p2.dat}
node[pos=.65,pin= 90:{$E_{\rm est}$}] {};
\end{axis}
\end{tikzpicture}
\end{minipage}

\begin{minipage}{.45\linewidth}
\begin{tikzpicture}
\begin{axis}
[
	width=\linewidth,
	xlabel={Iterations ($\PP_3$)},
	ylabel=Relative error (\%),
	ymode=log
]

\plot table[x expr = \thisrow{n},y expr=100*\thisrow{analytical}/85.3087]%
{figures/scattering/data/1/errors_p3.dat}
node[pos=.65,pin=-90:{$E_{\rm fem}$}] {};
\plot table[x expr = \thisrow{n},y expr=100*\thisrow{estimated}/85.3087]%
{figures/scattering/data/1/errors_p3.dat}
node[pos=.65,pin= 90:{$E_{\rm est}$}] {};
\end{axis}
\end{tikzpicture}
\end{minipage}
\caption{Behaviors of the estimated and analytical errors in the adaptive procedure for
the scattering problem of Section~\ref{sec_NTO} with $k = 2\pi$}
\label{figure_scattering_curve_1}
\end{figure}

\begin{figure}
\begin{minipage}{.45\linewidth}
\begin{tikzpicture}
\begin{axis}
[
	width=\linewidth,
	xlabel={Iterations ($\PP_1$)},
	ylabel=Relative error (\%),
	ymode=log
]

\plot table[x expr = \thisrow{n},y expr=100*\thisrow{analytical}/85.3087]%
{figures/scattering/data/5/errors_p1.dat}
node[pos=.75,pin=90:{$E_{\rm fem}$}] {};
\plot table[x expr = \thisrow{n},y expr=100*\thisrow{estimated}/85.3087]%
{figures/scattering/data/5/errors_p1.dat}
node[pos=.05,pin=-90:{$E_{\rm est}$}] {};
\end{axis}
\end{tikzpicture}
\end{minipage}
\begin{minipage}{.45\linewidth}
\begin{tikzpicture}
\begin{axis}
[
	width=\linewidth,
	xlabel={Iterations ($\PP_2$)},
	ylabel=Relative error (\%),
	ymode=log
]

\plot table[x expr = \thisrow{n},y expr=100*\thisrow{analytical}/85.3087]%
{figures/scattering/data/5/errors_p2.dat}
node[pos=.15,pin=0:{$E_{\rm fem}$}] {};
\plot table[x expr = \thisrow{n},y expr=100*\thisrow{estimated}/85.3087]%
{figures/scattering/data/5/errors_p2.dat}
node[pos=.05,pin=-90:{$E_{\rm est}$}] {};
\end{axis}
\end{tikzpicture}
\end{minipage}

\begin{minipage}{.45\linewidth}
\begin{tikzpicture}
\begin{axis}
[
	width=\linewidth,
	xlabel={Iterations ($\PP_3$)},
	ylabel=Relative error (\%),
	ymode=log
]

\plot table[x expr = \thisrow{n},y expr=100*\thisrow{analytical}/85.3087]%
{figures/scattering/data/5/errors_p3.dat}
node[pos=.05,pin=0:{$E_{\rm fem}$}] {};
\plot table[x expr = \thisrow{n},y expr=100*\thisrow{estimated}/85.3087]%
{figures/scattering/data/5/errors_p3.dat}
node[pos=.05,pin=-90:{$E_{\rm est}$}] {};
\end{axis}
\end{tikzpicture}
\end{minipage}
\caption{Behaviors of the estimated and analytical errors in the adaptive procedure for
the scattering problem of Section~\ref{sec_NTO} with $k = 10\pi$}
\label{figure_scattering_curve_5}
\end{figure}

\begin{figure}
\begin{centering}
%
%

\includegraphics{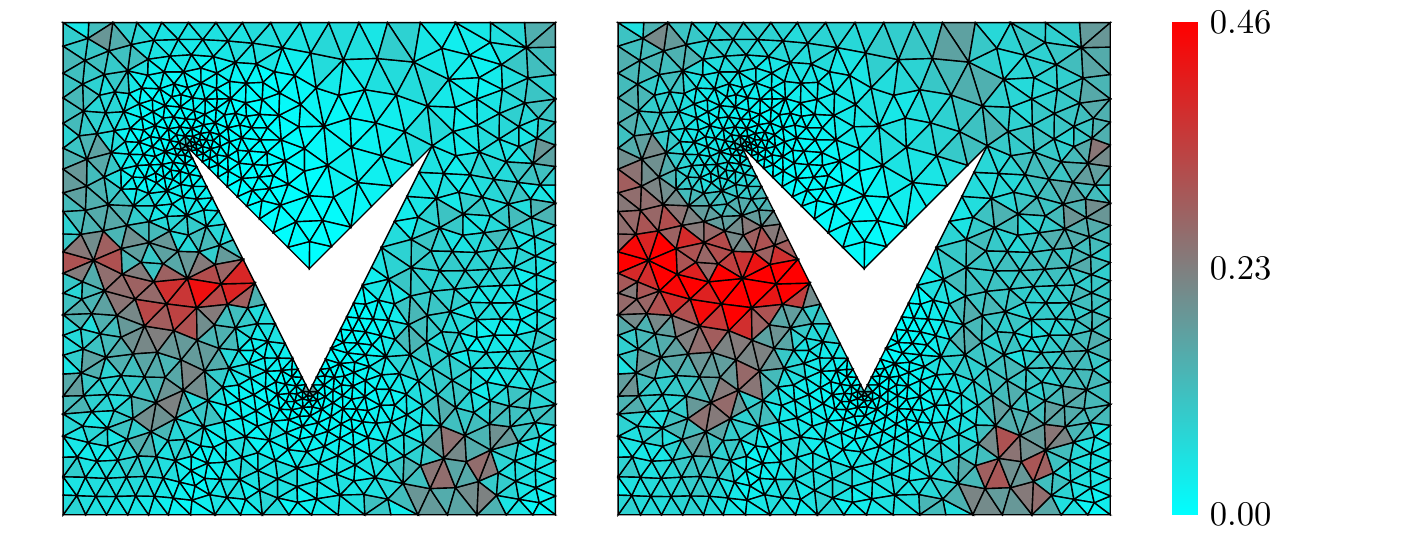}

\includegraphics{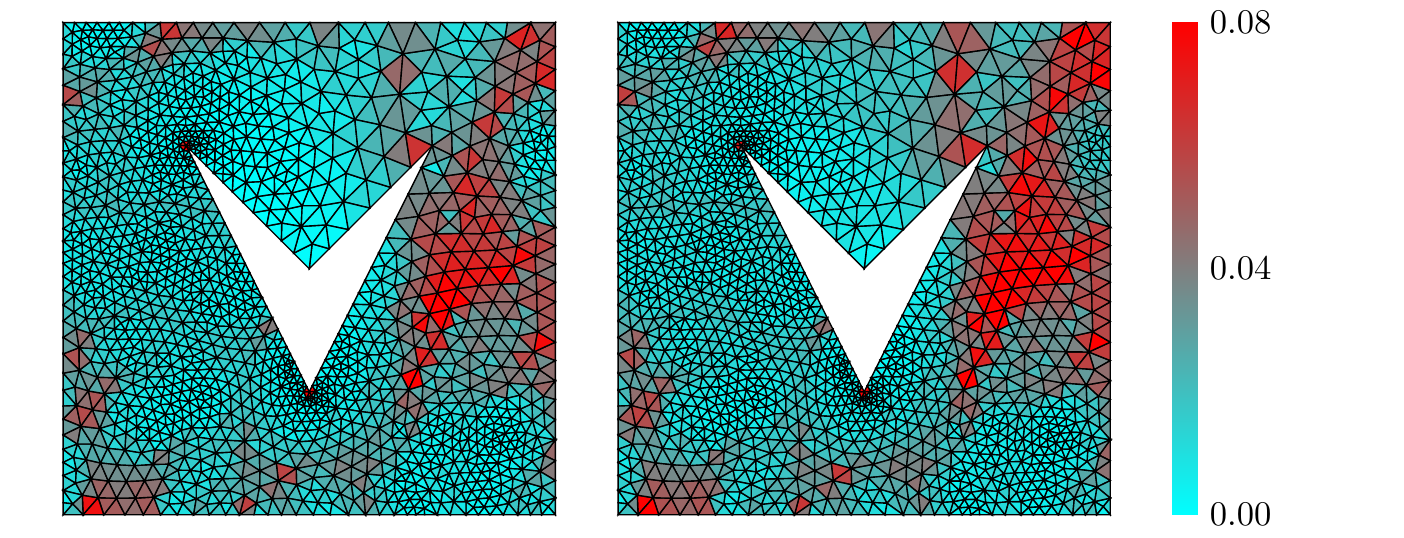}

\includegraphics{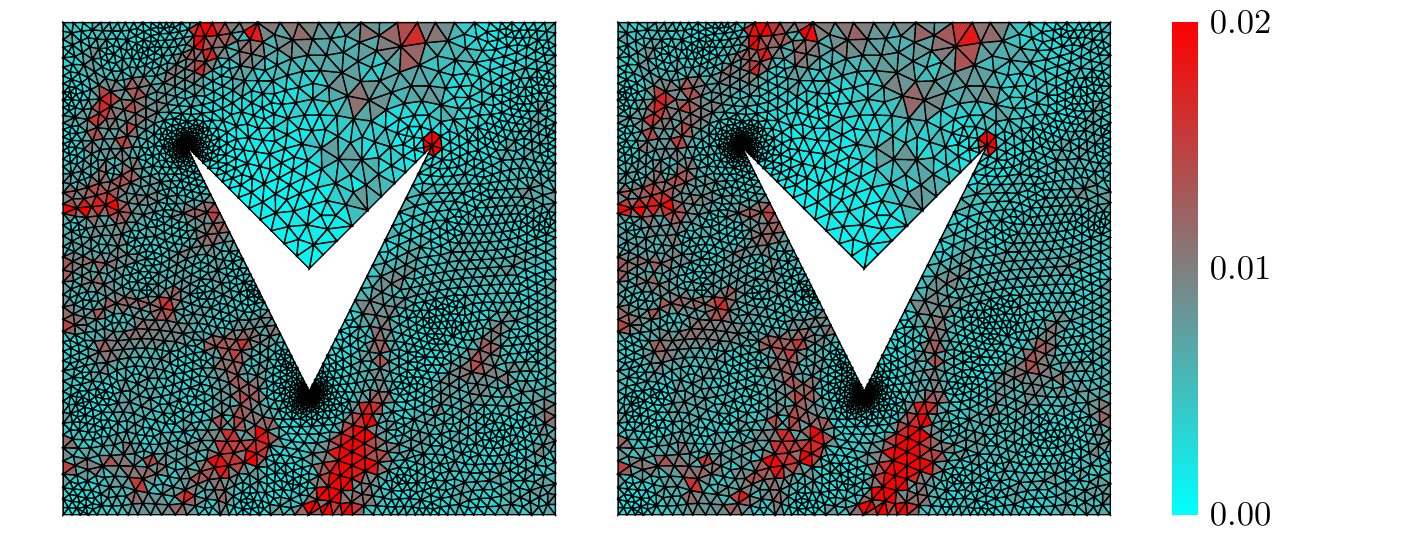}

\end{centering}
\caption{Estimators $\eta_K$ (left) and elementwise errors $\norm{\widetilde u_h-u_h}_{1,k,K}$ (right)
for the scattering problem of Section~\ref{sec_NTO} for $k = 10\pi$ with $\mathcal P_3$ elements}
\label{figure_scattering_error}
\end{figure}

\begin{figure}
\begin{minipage}{.40\linewidth}
\includegraphics[width=6.0cm]{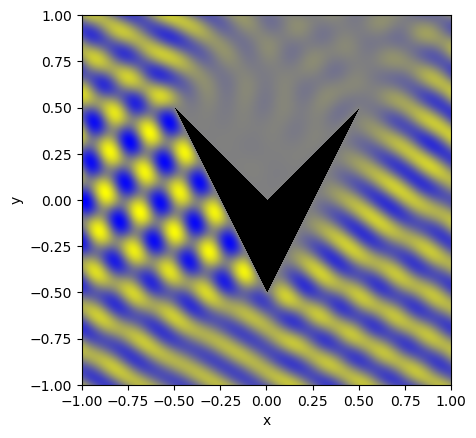}
\end{minipage}
\begin{minipage}{.40\linewidth}
\includegraphics[width=6.0cm]{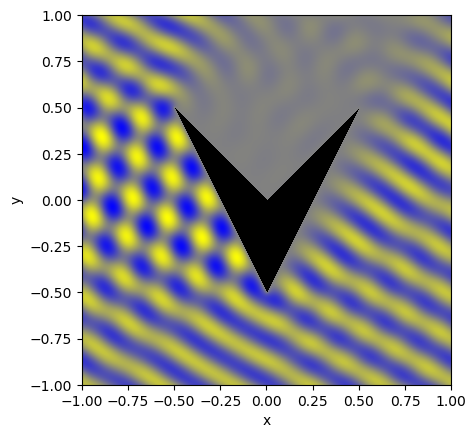}
\end{minipage}
\begin{minipage}{.15\linewidth}
\input{figures/scattering/colorbar.tex}
\vspace{.5cm}
\end{minipage}
\caption{Scattering problem with $\nu = \pi/3$: real (left) and imaginary (right) parts of the solution for $k = 10\pi$}
\label{figure_scattering_solution}
\end{figure}


\begin{figure}
\center
\begin{minipage}{.40\linewidth}
\hspace{1cm}
\begin{tikzpicture}[scale=2.3]
\definecolor{color_0}{rgb}{1,1,1};
\draw[fill=color_0] (0.5,0.5) -- (0.325025,0.756147) -- (-0.0722941,0.581106) -- cycle;
\definecolor{color_1}{rgb}{1,1,1};
\draw[fill=color_1] (0.5,0.5) -- (0.233898,-0.0322042) -- (0.68575,0.0125127) -- cycle;
\definecolor{color_2}{rgb}{1,1,1};
\draw[fill=color_2] (0.531014,1) -- (0.325025,0.756147) -- (0.724359,0.604281) -- cycle;
\definecolor{color_3}{rgb}{1,1,1};
\draw[fill=color_3] (0.325025,0.756147) -- (0.0866299,1) -- (-0.0722941,0.581106) -- cycle;
\definecolor{color_4}{rgb}{1,1,1};
\draw[fill=color_4] (0.233898,-0.0322042) -- (0,-0.5) -- (0.5364,-0.515509) -- cycle;
\definecolor{color_5}{rgb}{1,1,1};
\draw[fill=color_5] (0.68575,0.0125127) -- (1,-0.484063) -- (1,-0.0152579) -- cycle;
\definecolor{color_6}{rgb}{1,1,1};
\draw[fill=color_6] (1,-0.484063) -- (0.68575,0.0125127) -- (0.5364,-0.515509) -- cycle;
\definecolor{color_7}{rgb}{1,1,1};
\draw[fill=color_7] (0.724359,0.604281) -- (0.5,0.5) -- (0.68575,0.0125127) -- cycle;
\definecolor{color_8}{rgb}{1,1,1};
\draw[fill=color_8] (-1,-0.0855462) -- (-1,-0.506841) -- (-0.593387,-0.454253) -- cycle;
\definecolor{color_9}{rgb}{1,1,1};
\draw[fill=color_9] (0.68575,0.0125127) -- (1,0.446564) -- (0.724359,0.604281) -- cycle;
\definecolor{color_10}{rgb}{1,1,1};
\draw[fill=color_10] (1,1) -- (0.531014,1) -- (0.724359,0.604281) -- cycle;
\definecolor{color_11}{rgb}{1,1,1};
\draw[fill=color_11] (-1,1) -- (-1,0.437706) -- (-0.64925,0.655008) -- cycle;
\definecolor{color_12}{rgb}{1,1,1};
\draw[fill=color_12] (1,-1) -- (1,-0.484063) -- (0.5364,-0.515509) -- cycle;
\definecolor{color_13}{rgb}{1,1,1};
\draw[fill=color_13] (-1,-1) -- (-0.472667,-1) -- (-0.593387,-0.454253) -- cycle;
\definecolor{color_14}{rgb}{1,1,1};
\draw[fill=color_14] (1,0.446564) -- (1,1) -- (0.724359,0.604281) -- cycle;
\definecolor{color_15}{rgb}{1,1,1};
\draw[fill=color_15] (1,-0.0152579) -- (1,0.446564) -- (0.68575,0.0125127) -- cycle;
\definecolor{color_16}{rgb}{1,1,1};
\draw[fill=color_16] (0,0) -- (0.5,0.5) -- (-0.0722941,0.581106) -- cycle;
\definecolor{color_17}{rgb}{1,1,1};
\draw[fill=color_17] (0.195666,-0.748463) -- (0.0487794,-1) -- (0.493543,-1) -- cycle;
\definecolor{color_18}{rgb}{1,1,1};
\draw[fill=color_18] (-0.0722941,0.581106) -- (-0.444756,1) -- (-0.64925,0.655008) -- cycle;
\definecolor{color_19}{rgb}{1,1,1};
\draw[fill=color_19] (0.195666,-0.748463) -- (0.5364,-0.515509) -- (0,-0.5) -- cycle;
\definecolor{color_20}{rgb}{1,1,1};
\draw[fill=color_20] (0.5364,-0.515509) -- (0.493543,-1) -- (1,-1) -- cycle;
\definecolor{color_21}{rgb}{1,1,1};
\draw[fill=color_21] (0.0866299,1) -- (-0.444756,1) -- (-0.0722941,0.581106) -- cycle;
\definecolor{color_22}{rgb}{1,1,1};
\draw[fill=color_22] (-0.678501,0.123646) -- (-1,-0.0855462) -- (-0.593387,-0.454253) -- cycle;
\definecolor{color_23}{rgb}{1,1,1};
\draw[fill=color_23] (-0.290413,0.0808256) -- (-0.5,0.5) -- (-0.678501,0.123646) -- cycle;
\definecolor{color_24}{rgb}{1,1,1};
\draw[fill=color_24] (-0.472667,-1) -- (-0.218115,-0.684186) -- (-0.593387,-0.454253) -- cycle;
\definecolor{color_25}{rgb}{1,1,1};
\draw[fill=color_25] (-1,0.437706) -- (-0.678501,0.123646) -- (-0.64925,0.655008) -- cycle;
\definecolor{color_26}{rgb}{1,1,1};
\draw[fill=color_26] (-0.472667,-1) -- (0.0487794,-1) -- (-0.218115,-0.684186) -- cycle;
\definecolor{color_27}{rgb}{1,1,1};
\draw[fill=color_27] (0.195666,-0.748463) -- (0.493543,-1) -- (0.5364,-0.515509) -- cycle;
\definecolor{color_28}{rgb}{1,1,1};
\draw[fill=color_28] (-0.444756,1) -- (-1,1) -- (-0.64925,0.655008) -- cycle;
\definecolor{color_29}{rgb}{1,1,1};
\draw[fill=color_29] (-1,0.437706) -- (-1,-0.0855462) -- (-0.678501,0.123646) -- cycle;
\definecolor{color_30}{rgb}{1,1,1};
\draw[fill=color_30] (-0.5,0.5) -- (-0.253178,0.253178) -- (-0.0722941,0.581106) -- cycle;
\definecolor{color_31}{rgb}{1,1,1};
\draw[fill=color_31] (-0.678501,0.123646) -- (-0.593387,-0.454253) -- (-0.290413,0.0808256) -- cycle;
\definecolor{color_32}{rgb}{1,1,1};
\draw[fill=color_32] (0.5364,-0.515509) -- (0.68575,0.0125127) -- (0.233898,-0.0322042) -- cycle;
\definecolor{color_33}{rgb}{1,1,1};
\draw[fill=color_33] (-0.64925,0.655008) -- (-0.5,0.5) -- (-0.0722941,0.581106) -- cycle;
\definecolor{color_34}{rgb}{1,1,1};
\draw[fill=color_34] (-0.253178,0.253178) -- (0,0) -- (-0.0722941,0.581106) -- cycle;
\definecolor{color_35}{rgb}{1,1,1};
\draw[fill=color_35] (0,-0.5) -- (-0.109536,-0.280928) -- (-0.218115,-0.684186) -- cycle;
\definecolor{color_36}{rgb}{1,1,1};
\draw[fill=color_36] (-0.218115,-0.684186) -- (-0.109536,-0.280928) -- (-0.593387,-0.454253) -- cycle;
\definecolor{color_37}{rgb}{1,1,1};
\draw[fill=color_37] (0.325025,0.756147) -- (0.5,0.5) -- (0.724359,0.604281) -- cycle;
\definecolor{color_38}{rgb}{1,1,1};
\draw[fill=color_38] (-0.218115,-0.684186) -- (0.0487794,-1) -- (0.195666,-0.748463) -- cycle;
\definecolor{color_39}{rgb}{1,1,1};
\draw[fill=color_39] (0.531014,1) -- (0.0866299,1) -- (0.325025,0.756147) -- cycle;
\definecolor{color_40}{rgb}{1,1,1};
\draw[fill=color_40] (-0.5,0.5) -- (-0.64925,0.655008) -- (-0.678501,0.123646) -- cycle;
\definecolor{color_41}{rgb}{1,1,1};
\draw[fill=color_41] (-0.109536,-0.280928) -- (-0.290413,0.0808256) -- (-0.593387,-0.454253) -- cycle;
\definecolor{color_42}{rgb}{1,1,1};
\draw[fill=color_42] (-1,-0.506841) -- (-1,-1) -- (-0.593387,-0.454253) -- cycle;
\definecolor{color_43}{rgb}{1,1,1};
\draw[fill=color_43] (-0.218115,-0.684186) -- (0.195666,-0.748463) -- (0,-0.5) -- cycle;
\end{tikzpicture}
\vspace{.5cm}
\end{minipage}
\begin{minipage}{.40\linewidth}
\includegraphics[width=6cm]{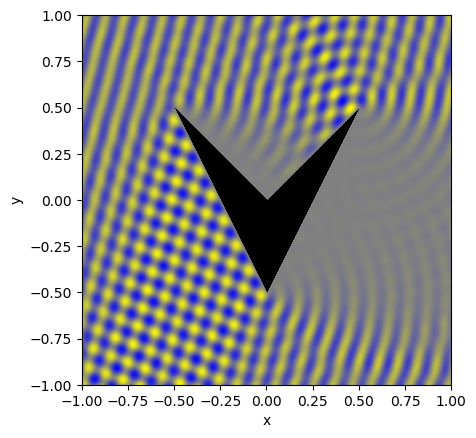}
\end{minipage}
\begin{minipage}{.15\linewidth}
$\;$
\end{minipage}

\begin{minipage}{.40\linewidth}
\includegraphics[width=6cm]{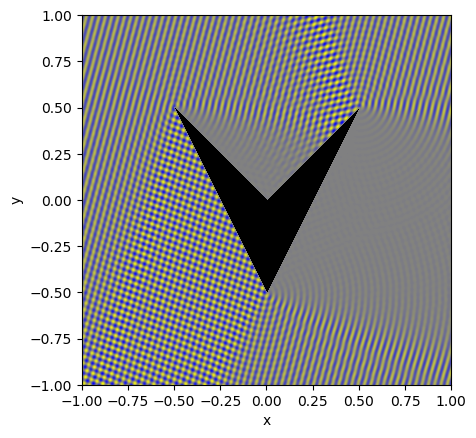}
\end{minipage}
\begin{minipage}{.40\linewidth}
\includegraphics[width=6cm]{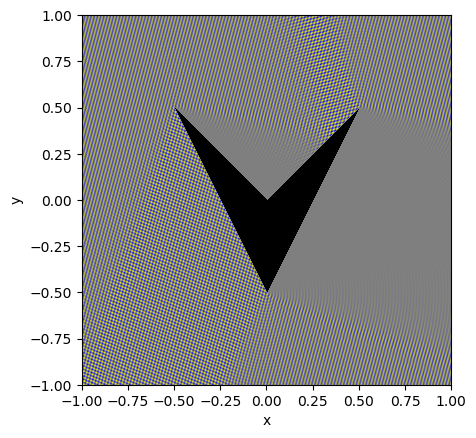}
\end{minipage}
\begin{minipage}{.15\linewidth}
\input{figures/scattering/colorbar2}
\vspace{.5cm}
\end{minipage}
\caption{Initial mesh for the adaptivity experiment and real part of reference solution for
different wavenumbers. Top right panel: $k = 20\pi$. Bottom left panel $k = 60\pi$. Bottom
right panel $k=120\pi$}
\label{figure_init_mesh_adaptivity}
\end{figure}

{\color{black}
\begin{figure}
\begin{minipage}{.45\linewidth}
\begin{tikzpicture}
\begin{axis}
[
	xlabel=Iterations,
	ylabel=Relative error (\%),
	ymode=log,
	ymin=1,
	ymax=500,
	width=\linewidth
]

\plot table[x = iter, y expr =100*\thisrow{true_err}/173]%
{figures/data/adaptivity/P1/curve_error.txt}
node[pos=.8,pin={[pin distance=1cm]90:{$E_{\rm fem}$}}] {};

\plot table[x = iter, y expr =100*\thisrow{est_err}/173]%
{figures/data/adaptivity/P1/curve_error.txt}
node[pos=.2,pin={[pin distance=.5cm]-90:{$E_{\rm est}$}}] {};

\addplot[dashed] coordinates {(10,1) (10,500)};

\end{axis}
\end{tikzpicture}
\end{minipage}
\begin{minipage}{.45\linewidth}
\begin{tikzpicture}
\begin{axis}
[
	xlabel=Iterations ($j$),
	ylabel=Mesh sizes,
	ymode=log,
	ymin=1.e-5,
	ymax=2,
	width=\linewidth
]

\plot table[x = iter, y = hmin]%
{figures/data/adaptivity/P1/curve.txt}
node[pos=.2,pin={[pin distance=.5cm]-90:{$h_{\rm min}$}}] {};

\plot table[x = iter, y = hmax]%
{figures/data/adaptivity/P1/curve.txt}
node[pos=.7,pin={[pin distance=.5cm]90:{$h_{\rm max}$}}] {};

\plot[domain=10:17] {1.5*exp(-ln(2)*x)};

\SlopeTriangle{.65}{-.1}{.15}{-.75}{$2^{-j}$}{}

\addplot[dashed] coordinates {(10,5.e-6) (10,2)};

\end{axis}
\end{tikzpicture}
\end{minipage}

\caption{Behaviors of the estimated and approximate analytical errors and mesh sizes in the adaptive procedure for the scattering problem of Section~\ref{sec_NTO}, $\mathcal P_1$ elements and $k=20\pi$}
\label{figure_adaptivity_p1}
\end{figure}
}

{\color{black}
\begin{figure}
\begin{minipage}{.45\linewidth}
\begin{tikzpicture}
\begin{axis}
[
	xlabel=Iterations,
	ylabel=Relative error (\%),
	ymode=log,
	ymin=5.e-1,
	ymax=1.e3,
	width=\linewidth
]

\plot table[x = iter, y expr =100*\thisrow{true_err}/173]%
{figures/data/adaptivity/P2/curve_error.txt}
node[pos=.85,pin={[pin distance=1cm]90:{$E_{\rm fem}$}}] {};

\plot table[x = iter, y expr =100*\thisrow{est_err}/173]%
{figures/data/adaptivity/P2/curve_error.txt}
node[pos=.2,pin={[pin distance=.5cm]-90:{$E_{\rm est}$}}] {};

\addplot[dashed] coordinates {(12,5.e-1) (12,1.e3)};

\end{axis}
\end{tikzpicture}
\end{minipage}
\begin{minipage}{.45\linewidth}
\begin{tikzpicture}
\begin{axis}
[
	xlabel=Iterations ($j$),
	ylabel=Mesh sizes,
	ymode=log,
	ymin=1.e-5,
	ymax=2,
	width=\linewidth
]

\plot table[x = iter, y = hmin]%
{figures/data/adaptivity/P2/curve.txt}
node[pos=.2,pin={[pin distance=.5cm]-90:{$h_{\rm min}$}}] {};
\plot table[x = iter, y = hmax]%
{figures/data/adaptivity/P2/curve.txt}
node[pos=.8,pin={[pin distance=.5cm]90:{$h_{\rm max}$}}] {};

\plot[domain=12:17] {3*exp(-ln(2)*x)};

\SlopeTriangle{.70}{-.1}{.15}{-.75}{$2^{-j}$}{}

\addplot[dashed] coordinates {(12,1.e-5) (12,2)};

\end{axis}
\end{tikzpicture}
\end{minipage}

\caption{Behaviors of the estimated and approximate analytical errors and mesh sizes in the adaptive procedure for the scattering problem of Section~\ref{sec_NTO}, $\mathcal P_2$ elements and $k=60\pi$}
\label{figure_adaptivity_p2}
\end{figure}
}

{\color{black}
\begin{figure}
\begin{minipage}{.45\linewidth}
\begin{tikzpicture}
\begin{axis}
[
	xlabel=Iterations,
	ylabel=Relative error (\%),
	ymode=log,
	ymin=1.e-2,
	ymax=2.e3,
	width=\linewidth
]

\plot table[x = iter, y expr =100*\thisrow{true_err}/173]%
{figures/data/adaptivity/P4/curve_error.txt}
node[pos=.8,pin={[pin distance=1cm]90:{$E_{\rm fem}$}}] {};

\plot table[x = iter, y expr =100*\thisrow{est_err}/173]%
{figures/data/adaptivity/P4/curve_error.txt}
node[pos=.2,pin={[pin distance=.5cm]-90:{$E_{\rm est}$}}] {};

\addplot[dashed] coordinates {(12,1.e-2) (12,2.e3)};

\end{axis}
\end{tikzpicture}
\end{minipage}
\begin{minipage}{.45\linewidth}
\begin{tikzpicture}
\begin{axis}
[
	xlabel=Iterations ($j$),
	ylabel=Mesh sizes,
	ymode=log,
	ymin=1.e-5,
	ymax=2,
	width=\linewidth
]

\plot table[x = iter, y = hmin]%
{figures/data/adaptivity/P4/curve.txt}
node[pos=.2,pin={[pin distance=.5cm]-90:{$h_{\rm min}$}}] {};

\plot table[x = iter, y = hmax]%
{figures/data/adaptivity/P4/curve.txt}
node[pos=.7,pin={[pin distance=.5cm]90:{$h_{\rm max}$}}] {};

\plot[domain=12:17] {4*exp(-ln(2)*x)};

\SlopeTriangle{.70}{-.1}{.15}{-.75}{$2^{-j}$}{}

\addplot[dashed] coordinates {(12,1.e-5) (12,2)};

\end{axis}
\end{tikzpicture}
\end{minipage}

\caption{Behaviors of the estimated and approximate analytical errors and mesh sizes in the adaptive procedure for the scattering problem of Section~\ref{sec_NTO}, $\mathcal P_4$ elements and $k=120\pi$}
\label{figure_adaptivity_p4}
\end{figure}
}

Let us finally investigate the ability of the local estimators
$\eta_K$ of~\eqref{eq_definition_eta} to drive a mesh adaptive algorithm starting with a very coarse mesh.
To this end, we still consider the scattering of an incident wave by a non-trapping
obstacle~\eqref{eq_helmholtz_scattering}, but we change the incident angle to $\nu = -\pi/12$ and work with higher wavenumbers $k$.
We keep the same refinement algorithm as above but we start with
the (much) coarser mesh depicted in Figure \ref{figure_init_mesh_adaptivity}.
Figures~\ref{figure_adaptivity_p1}, \ref{figure_adaptivity_p2}, and~\ref{figure_adaptivity_p4}
present the results respectively obtained with $\mathcal P_1$ elements and
$k=20\pi$, $\mathcal P_2$ elements and $k=60\pi$, and $\mathcal P_4$ elements
and $k=120\pi$. The reference solutions, computed on a fine mesh with $\mathcal P_6$
elements, are represented in Figure~\ref{figure_init_mesh_adaptivity}.
The left panels of Figures~\ref{figure_adaptivity_p1}--\ref{figure_adaptivity_p4} show that, in all cases, the algorithm converges towards the
correct solution, even though the initial mesh is very coarse
and features less than one degree of freedom per wavelength.
In the right panels of Figures~\ref{figure_adaptivity_p1}--\ref{figure_adaptivity_p4},
we indicate the minimal and maximal element sizes in the mesh at each iteration
(recall that the meshing package divides the mesh size by $2$ in the zone selected for refinement.
In each figure, we indicate by a dashed vertical line the first iteration for which the resolved regime is entered in that there holds $\frac{kh}{2\pi p} \leq 1$.
In the resolved regime where $\frac{kh}{2\pi p} \leq 1$, we can see that the
minimal element sizes are divided by two at each iteration, which means
that the smallest elements in the mesh are always selected for
refinement. This is typical of refinements close to re-entrant
corners, and is expected to correctly capture the corner singularities.
In the unresolved regime, however, the minimal element sizes decrease
more slowly and are closer to the maximal element sizes, indicating a more uniform
refinement of the mesh. This is expected since a global refinement of the mesh is
required in the unresolved regime, before the local behavior of the solution can
be efficiently captured.
Congruently, in the first iterations of the algorithm (approximately~10),
the error $E_{\rm fem}$ stagnates or even slightly increases, while in the remaining iterations,
the error steeply decreases at each step.
An interesting observation is that this seems to appear here soon after the beginning
of the resolved regime where $\frac{kh}{2\pi p} \leq 1$, whereas a similar steep decrease
only appeared later for uniformly refined meshes, see Figures~\ref{figure_curve_p1_fullnorm}
and~\ref{figure_curve_p2_fullnorm}. This earlier decrease of the finite element error
may be explained by the fact that the resolved regime is defined only by the size of the largest
element in the mesh, so that a large part of the mesh can be refined before entering the
resolved regime.

\section{Conclusions}
\label{sec:conclusion}

We have proposed a novel a posteriori error estimator for the Helmholtz problem
with mixed boundary conditions in two and three space dimensions. The estimator is based on equilibrated flux reconstruction that relies on the solution of patchwise mixed finite element problems.
It is reliable, where the reliability constant depends on the approximation factors $\Cba$ and
(possibly) $\tCba$ and tends to one when $\Cba,\tCba \to 0$, so that the estimator becomes asymptotically unknown-constant-free.
We have also proven, via arguments based on elliptic regularity shift, that
the conditions $\Cba, \tCba \to 0$ are met, for most situations of practical interest,
when $\frac{h}{p}\to0$ with fixed $k$. Finally, we have proven that the derived estimator is
locally efficient and polynomial-degree-robust in all regimes and wavenumber-robust in the
asymptotic regime $\Cba \leq 1$.

The approximation factor(s) $\Cba$ (and $\tCba$) are unfortunately in general not computable.
We have managed to provide computable upper bounds on $\Cba$ in particular
settings of interest, including scattering by non-trapping obstacles and wave
propagation in free space. For such configurations, our upper bound thus becomes
guaranteed and fully computable with no unknown constant and no assumptions on the mesh size,
the polynomial degree, or the wavenumber. Unfortunately, our computable bounds on $\Cba$ are in
general too rough, not converging to zero in some cases, or only converging to $0$ with mesh
refinement $h \to 0$ but not with polynomial degree increase $p \to \infty$, in contrast to
the property $\Cba \to 0$ when $\frac{h}{p}\to0$ at fixed $k$. Consequently, an important
overestimation can appear for these guaranteed bounds, though they are still locally efficient
and polynomial-degree-robust. We believe that these issues could be addressed in future work
following~\cite{chaumontfrelet_nicaise_2018a}, in particular by carefully estimating the
multiplicative coefficient of corner singularities.

The presented numerical experiments illustrate our findings, and suggest that the
proposed estimators additionally have the potential to be asymptotically exact, and
indicate that they can drive adaptive mesh refinement starting from coarse meshes and small
polynomial degrees even when starting the adaptive process in the unresolved regime where
$\frac{kh}{2 \pi p} > 1$.

\appendix

\section{Estimate on the best-approximation constant for boundary data}
\label{sec_tcba}

In this appendix, we analyze the behavior of the quantity $\tCba$ defined
in~\eqref{eq_tCba} in terms of $\Cba$ defined in~\eqref{eq_Cba_bis}.
For the sake of simplicity, in this section, the notation
$C(\widehat \Omega,\widehat \Gamma_{\rm D})$ denotes a generic constant
that only depends on the geometry of $\Omega$ and $\GD$ but may vary from one occurrence to the other.
In addition $\Cqi(\kappa)$ is a ``quasi-interpolation'' constant that only
depends on the mesh shape-regularity parameter $\kappa$
\cite{hiptmair_pechstein_2017a,melenk_2005a}.

The results derived in this appendix rely on the following regularity assumption.

\begin{assu}[Additional regularity]
\label{assumption_shift_boundary}
Let $\phi \in L^2(\Omega)$. We assume that if $u \in H^1(\Omega)$
satisfies
\begin{equation*}
\left \{
\begin{array}{rcll}
-\Delta u &=& \phi & \text{ in } \Omega,
\\
u &=& 0 & \text{ on } \GD,
\\
\grad u {\cdot} \nn &=& 0 & \text{ on } \GA,
\end{array}
\right .
\end{equation*}
then $u \in H^2(\widetilde \Omega)$ with
\begin{equation*}
|u|_{2,\widetilde \Omega} \leq C(\widehat \Omega,\widehat \Gamma_{\rm D}) \|\phi\|_{0,\Omega},
\end{equation*}
where $\widetilde \Omega \subset \Omega$ is a neighborhood of $\GA$ (\ie, $\widetilde
\Omega$ is an open subset of $\Omega$ and $\GA$ is a subset of the closure of $\widetilde\Omega$)
and the constant
$C(\widehat \Omega,\widehat \Gamma_{\rm D})$ depends on the shape of $\Omega$
and the splitting of its boundary into $\GD$ and $\GA$ but not on its diameter $h_\Omega$.
Furthermore, we assume that if $\psi \in L^2(\GA)$ and $u \in H^1(\Omega)$ solves
\begin{equation*}
\left \{
\begin{array}{rcll}
-\Delta u &=& 0 & \text{ in } \Omega,
\\
u &=& 0 & \text{ on } \GD,
\\
\grad u {\cdot} \nn &=& \psi & \text{ on } \GA,
\end{array}
\right .
\end{equation*}
then $u \in H^{\frac32}(\widetilde \Omega)$ with
\begin{equation*}
|u|_{\frac32,\widetilde \Omega} \leq C(\widehat \Omega,\widehat \Gamma_{\rm D}) \|\psi\|_{0,\GA}.
\end{equation*}
\end{assu}

Assumption~\ref{assumption_shift_boundary} is not an important restriction.
Indeed, it is typically satisfied in applications, as the boundary $\GA$ is artificially designed to
enclose the region of interest. For instance, $\GA$ is usually selected
as the boundary of a convex polytope for scattering problems, so that
Assumption~\ref{assumption_shift_boundary} holds
(see~\cite{chaumontfrelet_nicaise_tomezyk_2018a} and \cite[Lemma 1]{costabel_1990a}).
In the case of cavity problems, $\GA$ is typically planar, and Assumption
\ref{assumption_shift_boundary} holds if the solid angle between $\GA$
and $\GD$ is less than or equal to $\pi/2$ (we can perform an odd reflection across
the Dirichlet boundary, and recover a situation similar to the scattering problem,
see~\cite{chaumontfrelet_nicaise_tomezyk_2018a}).
As a result, Assumption~\ref{assumption_shift_boundary}
is satisfied in all the configurations depicted in Figure~\ref{figure_domains}.

Our next step is to employ a lifting operator $\lift$ introduced in \cite{melenk_sauter_2011a}
that transforms the boundary right-hand side on $\GA$, say $\psi$, appearing
in the definition~\eqref{eq_tCba} of $\tCba$ into a volume right-hand side $\lift_\psi$.
We remark that there exists a function $\chi \in C^\infty(\overline{\Omega})$ such that
$0 \leq \chi \leq 1$ in $\overline{\Omega}$, $\chi = 0$ outside $\widetilde \Omega$,
and $\chi = 1$ in a neighborhood on $\GA$.
In addition, a simple scaling argument shows that we can choose $\chi$ such that
\begin{equation*}
|\chi|_{j,\infty,\Omega} \leq C(\widehat \Omega,\widehat \Gamma_{\rm D}) h_\Omega^{-j}
\end{equation*}
for all $j \in \mathbb N$. The main novelty of the following result resides when both subsets
$\GD$ and $\GA$ have positive measure and touch each other, so that only a regularity shift to
$H^{\frac32}$ is available owing to Assumption~\ref{assumption_shift_boundary}.

\begin{lemm}[Boundary lifting operator]
\label{lemma_lifting}
Let Assumption~\ref{assumption_shift_boundary} hold.
For all $\psi \in L^2(\GA)$, we define $\lift_\psi$ as the unique element of $H^1_\GD(\Omega)$
such that
\begin{equation}
\label{eq_definition_lifting}
a(w,\lift_\psi) = (w,\psi)_\GA
\end{equation}
for all $w \in H^1_\GD(\Omega)$, where
\begin{equation*}
a(w,\lift_\psi) \eq k^2(w,\lift_\psi) -ik(w,\lift_\psi)_\GA + (\grad w,\grad \lift_\psi).
\end{equation*}
Then we have
\begin{align}
\label{eq_estimate_lifting_GA}
k\|\lift_\psi\|_{0,\GA}
&\leq
\|\psi\|_{0,\GA},
\\
\label{eq_estimate_lifting_L2}
k^2 \|\lift_\psi\|_{0,\Omega}^2 + |\lift_\psi|_{1,\Omega}^2
&\leq
\frac{1}{k} \|\psi\|_{0,\GA}^2.
\end{align}
In addition, we have
\begin{equation}
\label{eq_estimate_lifting_reg}
k^{\frac12} \inf_{v_h \in V_h}
|\chi \lift_\psi - v_h|_{1,\Omega}
\leq
C(\widehat \Omega,\widehat \Gamma_{\rm D}) \Cqi(\kappa) \left (
\left (\frac{kh}{p}\right )^{\frac12}
+
\frac{kh}{p}
\right )\|\psi\|_{0,\GA},
\end{equation}
where the last term is present only if $\GD$ has positive surface measure.
\end{lemm}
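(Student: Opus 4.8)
The plan is to establish the three estimates of Lemma~\ref{lemma_lifting} in order, exploiting the fact that $\lift_\psi$ solves a coercive-in-the-imaginary-part problem.

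\textbf{Estimates \eqref{eq_estimate_lifting_GA} and \eqref{eq_estimate_lifting_L2}.} First I would test the defining relation \eqref{eq_definition_lifting} with $w = \lift_\psi$. Taking the imaginary part kills the $k^2\|\lift_\psi\|_{0,\Omega}^2$ and $|\lift_\psi|_{1,\Omega}^2$ contributions and leaves $-k\|\lift_\psi\|_{0,\GA}^2 = \Im (\lift_\psi,\psi)_\GA$, whence the Cauchy--Schwarz inequality on $\GA$ gives $k\|\lift_\psi\|_{0,\GA}^2 \le \|\lift_\psi\|_{0,\GA}\|\psi\|_{0,\GA}$, i.e.\ \eqref{eq_estimate_lifting_GA}. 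Then I would take the real part of the same identity: $k^2\|\lift_\psi\|_{0,\Omega}^2 + |\lift_\psi|_{1,\Omega}^2 = \Re(\lift_\psi,\psi)_\GA \le \|\lift_\psi\|_{0,\GA}\|\psi\|_{0,\GA} \le k^{-1}\|\psi\|_{0,\GA}^2$, where the last step inserts \eqref{eq_estimate_lifting_GA}; this is precisely \eqref{eq_estimate_lifting_L2}. These two are entirely routine.

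\textbf{Estimate \eqref{eq_estimate_lifting_reg}.} This is the substantive part. The idea is that $u:=\lift_\psi$, restricted to the neighborhood $\widetilde\Omega$ of $\GA$, solves an elliptic problem to which Assumption~\ref{assumption_shift_boundary} applies. Writing the strong form of \eqref{eq_definition_lifting}, we have $-\Delta u = -k^2 u$ in $\Omega$, $u = 0$ on $\GD$, $\grad u\cdot\nn - iku = \psi$ on $\GA$. I would split this into an additive decomposition adapted to the two pieces of the boundary data on $\GA$: the $H^{3/2}$-regularity shift for the problem with right-hand side $\psi + iku|_\GA$ on $\GA$ (and zero volume source, zero Dirichlet trace), and the $H^2$-regularity shift for the problem with volume source $-k^2u$. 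Concretely, let $u_1$ solve $-\Delta u_1 = 0$, $u_1=0$ on $\GD$, $\grad u_1\cdot\nn = \psi + iku$ on $\GA$, and $u_2$ solve $-\Delta u_2 = -k^2 u$, $u_2 = 0$ on $\GD$, $\grad u_2\cdot\nn = 0$ on $\GA$; then $u = u_1 + u_2$. Assumption~\ref{assumption_shift_boundary} yields $|u_1|_{3/2,\widetilde\Omega} \le C(\widehat\Omega,\widehat\Gamma_{\rm D})\|\psi + iku\|_{0,\GA} \le C(\widehat\Omega,\widehat\Gamma_{\rm D})(\|\psi\|_{0,\GA} + k\|u\|_{0,\GA})$ and $|u_2|_{2,\widetilde\Omega} \le C(\widehat\Omega,\widehat\Gamma_{\rm D})\,k^2\|u\|_{0,\Omega}$. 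Using \eqref{eq_estimate_lifting_GA} on the first and \eqref{eq_estimate_lifting_L2} on the second (so $k^2\|u\|_{0,\Omega} \le k^{1/2}\|\psi\|_{0,\GA}$), both are controlled by $C(\widehat\Omega,\widehat\Gamma_{\rm D})$ times $\|\psi\|_{0,\GA}$ up to the correct powers of $kh_\Omega$; I would track the scaling carefully so that the final bound has the homogeneity displayed in \eqref{eq_estimate_lifting_reg}.

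\textbf{From the regularity shift to the approximation estimate.} Next I would bound $\inf_{v_h\in V_h}|\chi u - v_h|_{1,\Omega}$. Since $\chi$ is supported in $\widetilde\Omega$, vanishes near $\GD$ (so the cut-off product $\chi u$ still belongs to $H^1_\GD(\Omega)$ — here is where the ``last term present only if $\GD$ has positive measure'' enters), and is smooth with $|\chi|_{j,\infty,\Omega}\le C(\widehat\Omega,\widehat\Gamma_{\rm D})h_\Omega^{-j}$, the function $\chi u$ has the Sobolev regularity of $u$ on $\widetilde\Omega$ by the Leibniz rule: in the worst case (touching Dirichlet and Neumann pieces) it is merely in $H^{3/2}$ near the junction but in $H^2$ where only the volume part contributes. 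I would therefore apply a quasi-interpolation operator with constant $\Cqi(\kappa)$ (the Melenk operator of \cite{melenk_2005a} in 2D, \cite{hiptmair_pechstein_2017a} in 3D) that delivers $\inf_{v_h}|\chi u - v_h|_{1,\Omega} \le \Cqi(\kappa)\big((h/p)^{1/2}|\chi u|_{3/2,\widetilde\Omega} + (h/p)|\chi u|_{2,\widetilde\Omega}\big)$, handling the $H^{3/2}$ and $H^2$ pieces with the respective fractional and integer interpolation rates. Substituting the two regularity bounds above, multiplying by $k^{1/2}$, and collecting powers of $kh/p$ and $kh_\Omega$ gives exactly \eqref{eq_estimate_lifting_reg}. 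The main obstacle is the $H^{3/2}$-only regularity at the $\GD$–$\GA$ junction: I need to verify that $\chi\lift_\psi$ is genuinely in $H^1_\GD(\Omega)$ despite this limited smoothness, that a quasi-interpolation operator preserving the homogeneous Dirichlet condition achieves the fractional-order rate $(h/p)^{1/2}$ on the $H^{3/2}$ part, and that the $k$-weights combine to the claimed scaling; getting the homogeneity dimensionally consistent (so the estimate is invariant under rescaling $\Omega$) is the bookkeeping that requires care.
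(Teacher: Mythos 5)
Your proposal follows essentially the same route as the paper: test with $w=\lift_\psi$ and take imaginary/real parts for \eqref{eq_estimate_lifting_GA}--\eqref{eq_estimate_lifting_L2}, then split $\lift_\psi$ into an $H^2$ piece driven by the volume source $-k^2\lift_\psi$ and an $H^{3/2}$ piece driven by the boundary datum $\psi-ik\lift_\psi$, control each via Assumption~\ref{assumption_shift_boundary} together with the first two estimates, cut off with $\chi$ using the Leibniz rule, and quasi-interpolate at the rates $(h/p)^{1/2}$ and $h/p$. The one inaccuracy is your explanation of why the $kh/p$ term is absent when $|\GD|=0$: the paper's reason is that in that case one proves a global $H^{3/2}$ bound on $\lift_\psi$ directly (absorbing the volume source via $k^2\|\lift_\psi\|_{\frac12,\Omega}\le k^2\sqrt{\|\lift_\psi\|_{0,\Omega}\|\lift_\psi\|_{1,\Omega}}$), so the $H^2$ decomposition is never invoked — it is not a matter of $\chi\lift_\psi$ belonging to $H^1_\GD(\Omega)$.
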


\begin{proof}
We first pick $w = \lift_\psi$ as a test function in~\eqref{eq_definition_lifting}.
Taking the imaginary part yields
\begin{equation*}
k\|\lift_\psi\|_{0,\GA}^2 = -\Im (\lift_\psi,\psi)_\GA \leq \|\psi\|_{0,\GA}\|\lift_\psi\|_{0,\GA},
\end{equation*}
and~\eqref{eq_estimate_lifting_GA} follows. Now, we take the real
part and use the above bound to obtain
\begin{equation*}
k^2 \|\lift_\psi\|_{0,\Omega}^2 + |\lift_\psi|_{1,\Omega}^2 = \Re (\lift_\psi,\psi)_{\GA}
\leq
\frac{1}{k} \|\psi\|_{0,\GA}^2.
\end{equation*}
This yields~\eqref{eq_estimate_lifting_L2}. Finally, we observe
that we can see $\lift_\psi$ as the unique element of $H^1_\GD(\Omega)$
such that
\begin{equation*}
(\grad w,\grad \lift_\psi) = -k^2 (w,\lift_\psi) + (w,\psi-ik\lift_\psi)_{\GA}
\end{equation*}
for all $w \in H^1_\GD(\Omega)$.

At this point, we distinguish the case where $\GD$ is of zero measure or not.
If $|\GD| = 0$, then Assumption \ref{assumption_shift_boundary} together with
classical arguments (see~\cite{grisvard_1992a}) away from $\GA$ show that
\begin{eqnarray*}
\|\lift_\psi\|_{\frac32,\Omega}
&\leq&
C(\widehat \Omega,\widehat \Gamma_{\rm D})
\left (
k^2 \|\lift_\psi\|_{\frac12,\Omega} +
\|\psi\|_{0,\GA} + k\|\lift_\psi\|_{0,\GA}
\right )
\\
&\leq&
C(\widehat \Omega,\widehat \Gamma_{\rm D})
\left (
k^2 \sqrt{\|\lift_\psi\|_{0,\Omega}\|\lift_\psi\|_{1,\Omega}} +
\|\psi\|_{0,\GA} + k\|\lift_\psi\|_{0,\GA}
\right ),
\end{eqnarray*}
and we conclude with~\eqref{eq_estimate_lifting_GA} and~\eqref{eq_estimate_lifting_L2} that
\begin{equation*}
\|\chi \lift_\psi\|_{\frac32,\Omega}
\leq
C(\widehat \Omega,\widehat \Gamma_{\rm D})
\|\lift_\psi\|_{\frac32,\Omega}
\leq
C(\widehat \Omega,\widehat \Gamma_{\rm D})
\|\psi\|_{0,\GA}.
\end{equation*}
Then, \eqref{eq_estimate_lifting_reg} follows from standard
approximation theory~\cite{hiptmair_pechstein_2017a,melenk_2005a}.

On the other hand, when $|\GD| \neq 0$, we split $\lift_\psi = \phi_2 + \phi_{\frac32}$ where
$\phi_2, \phi_{\frac32} \in H^1_\GD(\Omega)$ are uniquely defined by
\begin{equation*}
(\grad w,\grad \phi_2    ) = -k^2 (w,\lift_\psi), \quad
(\grad w,\grad \phi_{\frac32}) = (w,\psi-ik\lift_\psi)_{\GA}
\end{equation*}
for all $w \in H^1_\GD(\Omega)$. Picking the test function
$w = \phi_2$, it follows that
\begin{equation*}
|\phi_2|_{1,\Omega}^2
\leq
k^2 \|\lift_\psi\|_{0,\Omega}\|\phi_2\|_{0,\Omega}
\leq
C(\widehat \Omega,\widehat \Gamma_{\rm D})
k^{\frac12} h_\Omega \|\psi\|_{0,\GA}|\phi_2|_{1,\Omega},
\end{equation*}
where we  used the Poincar\'e inequality to handle $\|\phi_2\|_{0,\Omega}$
and~\eqref{eq_estimate_lifting_GA} to estimate $k\|\lift_\psi\|_{0,\Omega}$.
Similarly, employing a multiplicative trace inequality
which combined with the Poincar\'e inequality yields
$\|\phi_{\frac32}\|_{0,\GA}\le C(\widehat \Omega,\widehat \Gamma_{\rm D})
h_\Omega^{\frac12} |\phi_{\frac32}|_{1,\Omega}$, we infer that
\begin{equation*}
|\phi_{\frac32}|_{1,\Omega}^2
\leq
\|\psi - ik\lift_\psi\|_{0,\GA}
\|\phi_{\frac32}\|_{0,\GA} \leq
C(\widehat \Omega,\widehat \Gamma_{\rm D})
h_\Omega^{\frac12} \|\psi\|_{0,\GA}|\phi_{\frac32}|_{1,\Omega}.
\end{equation*}
Then, invoking Assumption
\ref{assumption_shift_boundary}, we have
\begin{equation*}
|\phi_2|_{2,\widetilde \Omega}
\leq
C(\widehat \Omega,\widehat \Gamma_{\rm D})
k^2 \|\lift_\psi\|_{0,\Omega}
\leq
C(\widehat \Omega,\widehat \Gamma_{\rm D})
k^{\frac12} \|\psi\|_{0,\GA},
\end{equation*}
and
\begin{equation*}
|\phi_{\frac32}|_{\frac32,\widetilde \Omega}
\leq
C(\widehat \Omega,\widehat \Gamma_{\rm D})
\|\psi-ik\lift_\psi\|_{0,\GA}
\leq
C(\widehat \Omega,\widehat \Gamma_{\rm D})
\|\psi\|_{0,\GA}.
\end{equation*}
Thus, since $\supp \chi \subset \widetilde \Omega$
and invoking once again the Poincar\'e inequality,
we infer that
\begin{align*}
|\chi \phi_2|_{2,\Omega}
&\leq
\|\chi\|_{0,\Omega}|\phi_2|_{2,\widetilde \Omega} +
|\chi|_{1,\Omega}|\phi_2|_{1,\Omega} +
|\chi|_{2,\Omega}\|\phi_2\|_{0,\Omega}
\\
&\leq
C(\widehat \Omega,\widehat \Gamma_{\rm D})
\left (
|\phi_2|_{2,\widetilde \Omega} +
h_\Omega^{-1} |\phi_2|_{1,\Omega}
\right )
\\
&\leq
C(\widehat \Omega,\widehat \Gamma_{\rm D})
k^{\frac12} \|\psi\|_{0,\GA},
\end{align*}
and similar arguments show that
\begin{equation*}
|\chi \phi_{\frac32}|_{\frac32,\widetilde \Omega}
\leq
C(\widehat \Omega,\widehat \Gamma_{\rm D})
\|\psi\|_{0,\GA}.
\end{equation*}
At this point, \eqref{eq_estimate_lifting_reg} follows from standard
approximation theory~\cite{hiptmair_pechstein_2017a,melenk_2005a}. \qed
\end{proof}

We are now ready to establish the main result of this appendix which proves the
claim~\eqref{eq_bound_s_fine}.

\begin{prop}[Bound on $\tCba$]
Let Assumption~\ref{assumption_shift_boundary} hold. Then
\begin{equation*}
\tCba
\leq
C(\widehat \Omega,\widehat \Gamma_{\rm D})
\Cqi(\kappa) \left (\left (\frac{kh}{p} \right )^{\frac12} + \frac{kh}{p}\right )
+
\left (
C(\widehat \Omega,\widehat \Gamma_{\rm D})
\frac{1}{k h_\Omega} \left (1 + \frac{1}{k h_\Omega}\right ) + 2
\right ) \Cba.
\end{equation*}
\end{prop}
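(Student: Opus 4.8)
The plan is to split the adjoint solution $\widetilde u_\psi^\star$ defined by \eqref{eq_helmholtz_adjoint_bc} as
\[
\widetilde u_\psi^\star = \chi \lift_\psi + \left(\widetilde u_\psi^\star - \chi \lift_\psi\right),
\]
where $\lift_\psi$ is the boundary lifting of \eqref{eq_definition_lifting} and $\chi$ is the smooth cut-off from Lemma~\ref{lemma_lifting} (so that $\chi \equiv 1$ on a neighborhood of $\GA$, $\chi \equiv 0$ outside $\widetilde \Omega$, and $|\chi|_{j,\infty,\Omega} \leq C(\widehat\Omega,\widehat\Gamma_{\rm D}) h_\Omega^{-j}$). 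The first summand $\chi \lift_\psi$ carries the low boundary regularity near $\GA$ and its best-approximation error is controlled directly by \eqref{eq_estimate_lifting_reg}. The second summand will be shown to be an adjoint solution in the sense of \eqref{eq_helmholtz_weak_adjoint} with an $L^2(\Omega)$ datum, hence its best-approximation error will be controlled by $\Cba$ via \eqref{eq_best_approximation}. Adding the two contributions, dividing by $\|\psi\|_{0,\GA}$, and taking the supremum in the definition \eqref{eq_tCba} of $\tCba$ will yield the claim.

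The central point is the identity $\widetilde u_\psi^\star - \chi \lift_\psi = u_\Phi^\star$ with
\[
\Phi := 2k^2 \chi \lift_\psi + 2\,\grad \chi \cdot \grad \lift_\psi + (\Delta \chi)\lift_\psi \in L^2(\Omega).
\]
To establish it, I would evaluate $b(w,\widetilde u_\psi^\star - \chi \lift_\psi) = (w,\psi)_\GA - b(w,\chi \lift_\psi)$ for $w \in H^1_\GD(\Omega)$, expand $b(w,\chi \lift_\psi)$ using \eqref{eq_b} and the Leibniz rule $\grad(\chi \lift_\psi) = \lift_\psi \grad \chi + \chi \grad \lift_\psi$, and then eliminate the term $(w,\psi)_\GA$ by testing the defining relation \eqref{eq_definition_lifting} of $\lift_\psi$ against $\chi w \in H^1_\GD(\Omega)$ (admissible since $\chi w$ vanishes on $\GD$). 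Because $\chi \equiv 1$ on $\GA$, the two Robin contributions of the form $-ik(w,\lift_\psi)_\GA$ produced this way cancel, and so do the terms $(\chi \grad w, \grad \lift_\psi)$; because $\grad \chi$ is supported away from $\GA$ and $w$ vanishes on $\GD$, the boundary term arising from integrating $-(\grad w, \lift_\psi \grad \chi)$ by parts vanishes as well, leaving exactly the volume functional $w \mapsto (w,\Phi)$. Since $\widetilde u_\psi^\star - \chi \lift_\psi \in H^1_\GD(\Omega)$, uniqueness of the adjoint problem then gives $\widetilde u_\psi^\star - \chi \lift_\psi = u_\Phi^\star$.

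With this identity, the two pieces are estimated as follows. For the cut-off lifting, \eqref{eq_estimate_lifting_reg} gives $k^{1/2} \inf_{v_h \in V_h} |\chi \lift_\psi - v_h|_{1,\Omega} \leq C(\widehat\Omega,\widehat\Gamma_{\rm D}) \Cqi(\kappa) ((kh/p)^{1/2} + kh/p) \|\psi\|_{0,\GA}$, which is the first term of the asserted bound. For the remainder, \eqref{eq_best_approximation} applied to $u_\Phi^\star$ gives $k \inf_{v_h} |u_\Phi^\star - v_h|_{1,\Omega} \leq \Cba \|\Phi\|_{0,\Omega}$, and I would bound
\[
\|\Phi\|_{0,\Omega} \leq 2k^2 \|\lift_\psi\|_{0,\Omega} + 2 |\chi|_{1,\infty,\Omega} |\lift_\psi|_{1,\Omega} + |\chi|_{2,\infty,\Omega} \|\lift_\psi\|_{0,\Omega}
\]
using $0 \leq \chi \leq 1$, the bound on $|\chi|_{j,\infty,\Omega}$, and the lifting estimate \eqref{eq_estimate_lifting_L2} (which yields $k^2 \|\lift_\psi\|_{0,\Omega} \leq k^{1/2}\|\psi\|_{0,\GA}$, $|\lift_\psi|_{1,\Omega} \leq k^{-1/2}\|\psi\|_{0,\GA}$, and $\|\lift_\psi\|_{0,\Omega} \leq k^{-3/2}\|\psi\|_{0,\GA}$). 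This produces $k^{1/2} \inf_{v_h} |u_\Phi^\star - v_h|_{1,\Omega} \leq (C(\widehat\Omega,\widehat\Gamma_{\rm D})(kh_\Omega)^{-1}(1 + (kh_\Omega)^{-1}) + 2)\Cba \|\psi\|_{0,\GA}$, where the $2$ comes precisely from the $2k^2\chi\lift_\psi$ term and the remaining constant is absorbed into $C(\widehat\Omega,\widehat\Gamma_{\rm D})$. Combining with the triangle inequality, dividing by $\|\psi\|_{0,\GA}$, and passing to the supremum in \eqref{eq_tCba} closes the proof. I expect the main obstacle to be the identity $\widetilde u_\psi^\star - \chi \lift_\psi = u_\Phi^\star$ itself: one must carry out the conjugate-linear form manipulations carefully enough to see that all $\GA$-boundary contributions cancel and that, after one integration by parts, the $\grad\chi$ terms really do assemble into a genuine $L^2(\Omega)$ datum; the rest is a routine concatenation of the estimates already in place.
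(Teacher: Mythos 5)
Your proposal is correct and follows essentially the same route as the paper: the same decomposition $\widetilde u_\psi^\star = \chi\lift_\psi + (\widetilde u_\psi^\star - \chi\lift_\psi)$, the same identification of the remainder as an adjoint solution with the volume datum $\widetilde f = \div(\lift_\psi\grad\chi) + \grad\chi\cdot\grad\lift_\psi + 2k^2\chi\lift_\psi$ (which coincides with your $\Phi$ after expanding the divergence), and the same use of \eqref{eq_estimate_lifting_reg}, \eqref{eq_estimate_lifting_L2}, and \eqref{eq_best_approximation} to assemble the two contributions. The cancellations you anticipate (Robin terms, $(\chi\grad w,\grad\lift_\psi)$ terms, and the boundary term from integrating $(\grad w,\lift_\psi\grad\chi)$ by parts) are exactly the ones the paper exploits, so the identity you flag as the main obstacle goes through as you describe.
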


\begin{proof}
We consider an arbitrary element $\psi \in L^2(\GA)$ and define
$u_\psi^\star$ as the unique element of $H^1_\GD(\Omega)$
such that $b(w,u_\psi^\star) = (w,\psi)_\GA$
for all $w \in H^1_\GD(\Omega)$. Then, defining $\lift_\psi \in H^1_\GD(\Omega)$
following Lemma~\ref{lemma_lifting}, we have
\begin{align*}
& b(w,\chi \lift_\psi) \\
&=
-k^2 (w,\chi \lift_\psi) - ik (w,\chi \lift_\psi)_\GA +
(\grad w, \grad (\chi \lift_\psi))
\\
&=
k^2 (\chi w,\lift_\psi) - ik (\chi w,\lift_\psi)_\GA +
(\grad (\chi w),\grad \lift_\psi)
\\
&\quad +
(\grad w, \grad (\chi \lift_\psi)) - (\grad (\chi w),\grad \lift_\psi)
- 2k^2 (\chi w,\lift_\psi)
\\
&=
a(\chi w,\lift_\psi)
+
(\grad w,\lift_\psi \grad \chi) - (w\grad \chi,\grad \lift_\psi)
- 2k^2 (\chi w,\lift_\psi)
\\
&=
(\chi w,\psi)_\GA
+
(\grad w,\lift_\psi \grad \chi) - (w\grad \chi,\grad \lift_\psi)
- 2k^2 (\chi w,\lift_\psi)
\\
&=
(w,\psi)_\GA +
(w,\lift_\psi \grad \chi {\cdot} \nn)_{\partial \Omega} -
(w,\div (\lift_\psi \grad \chi)) - (w,\grad \chi {\cdot} \grad \lift_\psi)
- 2k^2 (w,\chi \lift_\psi)
\\
&=
(w,\psi)_\GA -
(w, \div (\lift_\psi \grad \chi)+\grad \chi {\cdot} \grad \lift_\psi + 2k^2 \chi \lift_\psi),
\end{align*}
where we used the fact that $\grad \chi {\cdot} \nn = 0$ on $\GA$, as $\chi = 1$ in a
neighborhood of $\GA$, and that $w = 0$ on $\GD$ so that
$(w,\lift_\psi \grad \chi \cdot \nn)_{\partial \Omega} = 0$.
It follows that
\begin{equation*}
b(w,u_\psi^\star - \chi \lift_\psi) = (w,\widetilde f),
\end{equation*}
with
$\widetilde f = \div (\lift_\psi \grad \chi)+\grad \chi {\cdot} \grad \lift_\psi+2k^2 \chi \lift_\psi$.
In particular, we have $\widetilde f \in L^2(\Omega)$, and using~\eqref{eq_estimate_lifting_L2},
we infer that
\begin{equation}
\label{tmp_upper_bound_tf}
\|\widetilde f\|_{0,\Omega}
\leq
\left (
C(\widehat \Omega,\widehat \Gamma_{\rm D})
\frac{1}{h_\Omega} \left (1 + \frac{1}{k h_\Omega}\right ) k^{-\frac12}
+ 2k^{\frac12}
\right )
\|\psi\|_{0,\GA}.
\end{equation}
Then, we have
\begin{align*}
k^{\frac12}
\left (\inf_{v_h \in V_h}
|u_\psi^\star - v_h|_{1,\Omega}
\right )
&\leq
k^{\frac12}
\left (
\inf_{y_h \in V_h}
|(u_\psi^\star - \chi \lift_\psi) - y_h|_{1,\Omega}
+
\inf_{w_h \in V_h} |\chi \lift_\psi - w_h|_{1,\Omega}
\right )
\\
&\leq
k^{-\frac12} \Cba \|\widetilde f\|_{0,\Omega}
+
k^{\frac12}
\inf_{w_h \in V_h}
|\chi \lift_\psi - w_h|_{1,\Omega},
\end{align*}
where the first bound stems by taking the function $v_h=y_h+w_h$ where $y_h\in V_h$ and
$w_h\in V_h$ realize the two infimums in the right-hand side.
Using~\eqref{tmp_upper_bound_tf} and Lemma~\ref{lemma_lifting},
we see that
\begin{align*}
k^{\frac12}
\left (\inf_{v_h \in V_h}
|u_\psi^\star - v_h|_{1,\Omega}
\right )
\leq{}&
\left (
C(\widehat \Omega,\widehat \Gamma_{\rm D})
\frac{1}{k h_\Omega} \left (1 + \frac{1}{k h_\Omega} \right ) + 2
\right ) \Cba \|\psi\|_{0,\GA}
\\
& +
C(\widehat \Omega,\widehat \Gamma_{\rm D})
\Cqi(\kappa) \left (
\frac{kh}{p} + \left ( \frac{kh}{p} \right )^{\frac12}
\right )
\|\psi\|_{0,\GA},
\end{align*}
and we conclude by taking the supremum over $\psi \in L^2(\GA)$. \qed
\end{proof}

\begin{coro}[Asymptotic regime]
Under Assumption~\ref{assumption_shift_boundary}, we have
\begin{equation} \label{eq_bound_s_fine}
1 + \tilde\theta_2(\Cba,\tCba) \leq 1 + \widetilde \theta_3 \left (\Cba,\frac{kh}{p}\right ),
\end{equation}
where $\widetilde \theta_3$ is a decreasing function of its two arguments
such that $\lim_{t,t' \to 0} \widetilde \theta_3(t,t') = 0$.
\end{coro}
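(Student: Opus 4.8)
The plan is to obtain the Corollary as an immediate consequence of the preceding Proposition bounding $\tCba$ in terms of $\Cba$ and $\tfrac{kh}{p}$, combined with the elementary monotonicity of the function $\tilde\theta_2$ from~\eqref{eq_Cupperf}. The first step I would carry out is to record that, for nonnegative arguments, $\tilde\theta_2(t,\tilde t)$ is nondecreasing in each of $t$ and $\tilde t$: it depends on $\tilde t$ only through $\tilde t^2$, and on $t$ only through $t^2$ and $\bigl(\tfrac12+\sqrt{\tfrac14+t^2}\bigr)^2$, all of which are nondecreasing for $t,\tilde t\ge0$, while the outer square root and the shift by $-1$ preserve monotonicity. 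I would also note $\tilde\theta_2(0,0)=\sqrt{1+0+0}-1=0$.

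Next, writing the bound of the Proposition as $\tCba\le\Phi\bigl(\Cba,\tfrac{kh}{p}\bigr)$ with
\[
\Phi(t,s):=C(\widehat\Omega,\widehat\Gamma_{\rm D})\,\Cqi(\kappa)\bigl(s^{1/2}+s\bigr)+\Bigl(C(\widehat\Omega,\widehat\Gamma_{\rm D})\tfrac{1}{kh_\Omega}\bigl(1+\tfrac{1}{kh_\Omega}\bigr)+2\Bigr)\,t,
\]
the monotonicity of $\tilde\theta_2$ in its second argument yields $\tilde\theta_2(\Cba,\tCba)\le\tilde\theta_2\bigl(\Cba,\Phi(\Cba,\tfrac{kh}{p})\bigr)$. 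I would then simply define $\widetilde\theta_3(t,s):=\tilde\theta_2\bigl(t,\Phi(t,s)\bigr)$, so that adding $1$ to both sides gives exactly~\eqref{eq_bound_s_fine}. Since $\Phi$ is nonnegative, continuous, nondecreasing in each variable, and $\Phi(0,0)=0$, the composition $\widetilde\theta_3$ is nonnegative, monotone in each argument in the sense required by the statement, and $\lim_{(t,s)\to(0,0)}\widetilde\theta_3(t,s)=\tilde\theta_2(0,0)=0$ by continuity; together with~\eqref{eq:bnd_Cba_shift} (so that $\Cba\to0$ as $\tfrac{h}{p}\to0$ at fixed $k$) and $\tfrac{kh}{p}\to0$, this recovers $\Cupper\to1$ in the asymptotic regime of Remark~\ref{rem_ass_reg}, now using only the computable quantity $\tfrac{kh}{p}$ in place of the uncomputable $\tCba$.

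There is essentially no hard analytic content in this argument, so I do not expect a genuine obstacle; the only point requiring a little care is the coefficient of $t$ in $\Phi$, which depends on $kh_\Omega$. It remains bounded whenever $kh_\Omega$ is bounded away from $0$, the regime of interest for Helmholtz problems, and in any case is absorbed into the definition of $\widetilde\theta_3$, whose two arguments $\Cba$ and $\tfrac{kh}{p}$ both vanish in the limit $\tfrac{h}{p}\to0$ at fixed $k$, so the asymptotic conclusion is unaffected. I would also double-check that the direction of monotonicity I derive for $\widetilde\theta_3$ is consistent with the wording of the Corollary, and adjust the phrasing if necessary, since nothing substantive depends on it.
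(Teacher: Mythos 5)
Your argument is correct and is exactly the (implicit) route the paper intends: the corollary is stated without a separate proof precisely because it follows by composing the monotone function $\tilde\theta_2$ with the bound $\tCba\le\Phi(\Cba,\tfrac{kh}{p})$ of the preceding proposition, which is what you do. Your side remark about the wording is also on target: the function $\widetilde\theta_3$ so constructed is nondecreasing in each argument (the paper's ``decreasing'' is a slip, since a decreasing function vanishing at the origin could not dominate the nonnegative $\tilde\theta_2$), and the dependence of $\Phi$ on $kh_\Omega$, $\kappa$, $\widehat\Omega$, $\widehat\Gamma_{\rm D}$ is harmlessly absorbed into $\widetilde\theta_3$ as you note.
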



\bibliographystyle{amsplain}
\bibliography{bibliography}

\end{document}